\definecolor{myblue}{RGB}{80,80,160}
\definecolor{mygreen}{RGB}{80,160,80}
\numberwithin{equation}{section}
\newtheorem{theorem}{Theorem}[section]
\newtheorem{lemma}[theorem]{Lemma}
\newtheorem{example}[theorem]{Example}
\newtheorem{definition}[theorem]{Definition}
\newtheorem{proposition}[theorem]{Proposition}
\theoremstyle{remark}
\DeclareMathOperator{\PF}{\text{PF}}
\DeclareMathOperator{\IPF}{\text{IPF}}
\newcommand{\aR}{\mathbf a}
\newcommand{\bR}{\mathbf b}
\newcommand{\cR}{\mathbf c}
\newcommand{\sR}{\mathbf s}
\newcommand{\kR}{\mathbf k}
\newcommand{\xR}{\mathbf x}
\newcommand{\yR}{\mathbf y}
\newcommand{\uR}{\mathbf u}
\newcommand{\vR}{\mathbf v}
\newcommand{\rR}{\mathbf r}
\newcommand{\piR}{\boldsymbol \pi}
\newcommand{\tauR}{\boldsymbol \tau}
\newcommand{\lambdaR}{\boldsymbol \lambda}
\newcommand{\alphaR}{\boldsymbol \alpha}
\newcommand{\betaR}{\boldsymbol \beta}
\newcommand{\gammaR}{\boldsymbol \gamma}
\newcommand{\deltaR}{\boldsymbol \delta}
\newcommand{\Sym}{\mathfrak{S}}
\newcommand{\MS}{\text{MS}}
\newcommand{\ER}{\mathbb E}
\newcommand{\F}{\mathscr F}
\newcommand{\C}{\mathscr C}
\newcommand{\Var}{\mathbb V \mathrm{ar}}
\newcommand{\Cov}{\mathbb C \mathrm{ov}}
\newcommand{\be}{\begin{equation}}
\newcommand{\ee}{\end{equation}}
\newcommand{\old}[1]{}
\NewDocumentCommand\DownArrow{O{2.0ex} O{black}}{%
   \mathrel{\tikz[baseline] \draw [<->, line width=0.5pt, #2] (0,0) -- ++(0,#1);}
}
\begin{document}

\forestset{parent color/.style args={#1}{
    {fill=#1},
    for tree={fill/. wrap pgfmath arg={#1!##1}{1/level()*80},draw=#1!80!black}},
    root color/.style args={#1}{fill={{#1!60!black!25},draw=#1!80!black}}
}

\title{Parking functions: Interdisciplinary connections}

\author{Mei Yin}
\thanks{MY was supported by the University of Denver's Faculty Research Fund 84688-145601.}
\address{Department of Mathematics, University of Denver, Denver, CO 80208}
\email{mei.yin at du.edu}

\date{\today}

\subjclass[2010]{% Doing my best
60C05; % Combinatorial probability
05A16, % Asymptotic enumeration
05A19} % Combinatorial identities, bijective combinatorics
\keywords{Parking function, Multi-shuffle, Asymptotic expansion, Abel's multinomial theorem, Edge-labeled spanning tree}

\begin{abstract}
Suppose that $m$ drivers each choose a preferred parking space in a linear car park with $n$ spots. In order, each driver goes to their chosen spot and parks there if possible, and otherwise takes the next available spot if it exists. If all drivers park successfully, the sequence of choices is called a parking function. Classical parking functions correspond to the case $m=n$.

We investigate various probabilistic properties of a uniform parking function. Through a combinatorial construction termed a parking function multi-shuffle, we give a formula for the law of multiple coordinates in the generic situation $m \lesssim n$. We further deduce all possible covariances, between two coordinates, between a coordinate and an unattempted spot, and between two unattempted spots. This asymptotic scenario in the generic situation $m \lesssim n$ is in sharp contrast with that of the special situation $m=n$.

A generalization of parking functions called interval parking functions is also studied, in which each driver is willing to park only in a fixed interval of spots. We construct a family of bijections between interval parking functions with $n$ cars and $n$ spots and edge-labeled spanning trees with $n+1$ vertices and a specified root.
\end{abstract}

\maketitle

\section{Introduction}
\label{intro}
Parking functions are an established area of research in combinatorics, with connections to labeled trees and forests (Chassaing and Marckert, \cite{CM}), non-crossing partitions and hyperplane arrangements (Stanley, \cite{Stanley1} \cite{Stanley2}), symmetric functions (Haiman, \cite{H}), abelian sandpiles (Cori and Rossin, \cite{CR}), and other topics.

Consider a parking lot with $n$ parking spots placed sequentially along a one-way street. A line of $m \leq n$ cars enters the lot, one by one. The $i$th car drives to its preferred spot $\pi_i$ and parks there if possible; if the spot is already occupied then the car parks in the first available spot after that. The list of preferences $\piR=(\pi_1,\dots,\pi_m)$ is called a \emph{generalized parking function} if all cars successfully park. (This generalizes the term \emph{parking function} which classically refers to the case $m=n$. When there is no risk of confusion we will drop the modifier ``generalized" and simply refer to both of these cases as parking functions). We denote the set of parking functions by $\PF(m, n)$, where $m$ is the number of cars and $n$ is the number of parking spots. The total number of parking functions is $|\PF(m, n)|=(n-m+1)(n+1)^{m-1}$ (Pitman and Stanley, \cite{PS1}). Using the pigeonhole principle, we see that a parking function $\piR\in \PF(m, n)$ must have at most one value $=n$, at most two values $\geq n-1$, and for each $k$ at most $k$ values $\ge n-k+1$, and any such function is a parking function. Equivalently, $\piR$ is a parking function if and only if
\begin{equation}\label{pigeon}
\#\{k: \pi_k \leq i\} \geq m-n+i, \hspace{.2cm} \forall i=n-m+1, \dots, n.
\end{equation}
Note that parking functions are invariant under the action of $\Sym_m$ by permuting cars.

In our previous work \cite{KY}, we investigated various probabilistic properties of a parking function chosen uniformly at random from $\PF(m, n)$, giving a formula for the law of a single coordinate. Adapting known results on random linear probes, we further deduced the covariance between two coordinates in the special situation $m=n$. This paper will delve deeper into the properties of a uniform parking function in the generic situation $m \lesssim n$. Our probabilistic results rely on an original combinatorial construction which we term a \emph{parking function multi-shuffle}, and our novel asymptotic calculation utilizes the multi-dimensional Cauchy product of the \emph{tree function} $F(z) = \sum_{s=0}^\infty (s+1)^{s-1}\frac{z^s}{s!}$, a variant of the Lambert function, and its generalizations. We will give all moments of multiple coordinates and deduce all possible covariances, between two coordinates, between a coordinate and an unattempted spot, and between two unattempted spots.

The multi-shuffle construction allows us to compute the number of parking functions $\PF(m, n)$ where the parking preferences of $l\leq m$ cars are arbitrarily specified. Alternatively, due to permutation symmetry, we can think that $l$ spots are already taken along a one-way street with $n$ parking spots, and we want to count the possible preferences for the remaining $m-l$ cars so that they can all successfully park. In the parking function literature, the set of successful preference sequences of the $m-l$ cars that enter the street later is referred to as \emph{parking completions} for $\tauR=(\tau_1, \dots, \tau_l)$ where the entries of $\tauR$ denote the $l$ spots that are taken previously, arranged in increasing order.

This parking scenario as well as its variations, such as defective parking functions where some drivers fail to park (Cameron et al., \cite{CJPS}), have generated significant interest over the years. Much progress has been made for the special case $m=n$ of parking functions. Parking completions with a single spot taken ($\tauR=(\tau_1)$ arbitrary) were enumerated by Diaconis and Hicks \cite{DH}. The case that the taken spots consist of a contiguous block starting from the first spot in the linear car park, $\tauR=(1, \dots, l)$, was first considered by Yan \cite{Yan1}, with an explicit formula given in a follow-up work by Gessel and Seo \cite{GS}. The formula was generalized by Ehrenborg and Happ \cite{EH} taking into account cars of different sizes. More recently, Adeniran et al. \cite{A} unified prior work on parking completions for $\PF(n, n)$ and computed the number of parking functions $\PF(n, n)$ where the parking preferences of $l\leq n$ cars are arbitrarily specified utilizing a pair of operations termed Join and Split. The multi-shuffle construction introduced in this paper builds upon our prior single shuffle construction \cite{KY} and is a further generalization to the above mentioned work by being applicable for general $m$ and $n$. Recognizing that unattempted parking spots break up a parking function into non-interacting pieces, the multi-shuffle construction also sheds light on the correlation between the coordinates of parking functions and unattempted spots.

Given a positive-integer-valued vector $\uR=(u_1, \dots, u_m)$ with $u_1 \leq \cdots \leq u_m$, a $\uR$-parking function of length $m$ is a sequence $(\pi_1, \dots, \pi_m)$ of positive integers whose non-decreasing rearrangement $(\lambda_1, \dots, \lambda_m)$ satisfies $\lambda_i\leq u_i$ for all $1\leq i\leq m$. Via a switch of coordinates in (\ref{pigeon}), we see that the parking function $\PF(m, n)$ investigated in this paper may be alternatively posed as a $\uR$-parking function, where the vector $\uR$ is an arithmetic progression: $\uR=(n-m+1, \dots, n)$. As we will see in Section \ref{completion}, more generally, a parking completion for $\PF(m, n)$ may be interpreted as a $\uR$-parking function, where the vector $\uR$ need not consist of consecutive numbers. Knowledge on $\PF(m, n)$ with specified parking preferences of $l\leq m$ cars therefore adds to the understanding of $\uR$-parking functions as well. In particular, our enumeration of parking completions provides a different perspective on the volume formula for Pitman-Stanley polytopes \cite{PS1}, and our mixed moment calculations for multiple coordinates of parking functions extend that of Kung and Yan \cite{KY2}, where the explicit formulas for the first and second factorial moments and a general form for the higher factorial moments of sums of $\uR$-parking functions were given.

This paper is organized as follows. Section \ref{pfs} illustrates the notion of parking function multi-shuffle that decomposes a parking function into smaller components (Definition \ref{shuffle}). This construction leads to an explicit characterization of multiple coordinates $\pi_1, \dots, \pi_l\in[n]$ of parking functions (Theorems \ref{main1} and \ref{component}). When $\pi_1, \dots, \pi_l$ consist of a contiguous block, a simplified characterization is given in Proposition \ref{un}. Section \ref{random} uses the multi-shuffle construction introduced in Section \ref{pfs} to investigate various properties of a parking function chosen uniformly at random from $\PF(m, n)$. We compute asymptotics of all moments of multiple coordinates in Theorem \ref{general-mean} in the generic situation $m \lesssim n$ and give complete technical details for all moments of two coordinates (Theorem \ref{mean}). We further derive all possible covariances concerning coordinates of parking functions and unattempted spots in Propositions \ref{two-coordinates}, \ref{coordinate-spot}, and \ref{two-spots}. The asymptotic scenario in the generic situation $m \lesssim n$ is contrasted with that of the special situation $m=n$ in Section \ref{special}. Finally, Section \ref{ipf} studies a generalization of parking functions called interval parking functions, in which each driver is willing to park only in a fixed interval of spots. We construct a family of bijections between interval parking functions $\IPF(n, n)$ and edge-labeled spanning trees $\F^e(n+1)$ (Theorem \ref{edge-labeled}).

\subsection*{Notations} Let $\mathbb{N}$ be the set of non-negative integers. For $m, n \in \mathbb{N}$, we write $[m, n]$ for the set of integers $\{m, \dots, n\}$ and $[n]=[1, n]$. For vectors $\aR, \bR \in [n]^m$, denote by $\aR\leq_C \bR$ if $a_i\leq b_i$ for all $i\in [m]$; this is the component-wise partial order on $[n]^m$. In a similar fashion, denote by $\aR <_C \bR$ if $a_i\leq b_i$ for all $i\in [m]$ and there is at least one $j \in [m]$ such that $a_j<b_j$. For $\bR \in [n]^m$, we write $[\bR]$ for the set of $\aR \in [n]^m$ with $\aR \leq_C \bR$. The conjugate (or reverse complement) of $\xR\in[n]^m$ is the vector $\xR^*=(n+1-x_m, \dots, n+1-x_1)$.

\section{Parking function multi-shuffle}
\label{pfs}
In this section we explore the properties of parking functions through a parking function multi-shuffle construction. We will write our results in terms of parking coordinates $\pi_1, \dots, \pi_l$ for explicitness, where $1\leq l \leq m$ is any integer. But due to permutation symmetry, they may be interpreted for any coordinates. Temporarily fix $\pi_{l+1}, \dots, \pi_m$. Let
\begin{equation}
A_{\pi_{l+1}, \dots, \pi_m}=\{\uR=(u_1, \dots, u_l): (u_1, \dots, u_l, \pi_{l+1}, \dots, \pi_m)\in \PF(m, n)\}.
\end{equation}
Via a switch of coordinates in (\ref{pigeon}), we see that $\piR=(u_1, \dots, u_l, \pi_{l+1}, \dots, \pi_{m})\in \PF(m, n)$ if and only if its non-decreasing rearrangement $\lambdaR=(\lambda_1, \dots, \lambda_m)$ satisfies $\lambda_i \leq n-m+i$ for all $1\leq i\leq m$. From the parking scheme, we may assume that $\uR=(u_1, \dots, u_l)$ is in strict increasing order, so that $u_i=\lambda_j \geq \lambda_i$ for some $j\geq i$. This implies that if $A_{\pi_{l+1}, \dots, \pi_m}$ is non-empty, then there is a unique maximal element (in component-wise partial order) $\uR \in [n]^l$ with $u_i \geq n-m+i$ for all $1\leq i\leq l$ and $A_{\pi_{l+1}, \dots, \pi_m}=[\uR]$. Therefore given the last $m-l$ parking preferences, it is sufficient to identify the largest feasible first $l$ preferences (if exists).

\begin{example}
Take $m=4$, $n=6$, $\pi_3=2$, and $\pi_4=6$. Then $A_{\pi_3, \pi_4}=[\uR]=[(4, 5)]$.
\end{example}

\begin{definition}\label{shuffle}
Take $1\leq l \leq m$ any integer. Let $\uR=(u_1, \dots, u_l) \in [n]^l$ be in increasing order with $u_i\geq n-m+i$ for all $1\leq i\leq l$. Say that $\pi_{l+1}, \dots, \pi_m$ is a \emph{parking function multi-shuffle} of $l+1$ parking functions $\alphaR_1 \in \PF(m-n+u_1-1, u_1-1), \alphaR_2 \in \PF(u_2-u_1-1, u_2-u_1-1), \dots, \alphaR_l \in \PF(u_l-u_{l-1}-1, u_l-u_{l-1}-1)$, and $\alphaR_{l+1} \in \PF(n-u_l, n-u_l)$ if $\pi_{l+1}, \dots, \pi_m$ is any permutation of the union of the $l+1$ words $\alphaR_1, \alphaR_2+(u_1, \dots, u_1), \dots, \alphaR_{l+1}+(u_l, \dots, u_l)$. We will denote this by $(\pi_{l+1}, \dots, \pi_m) \in \MS(m-n+u_1-1, u_1-1, u_2-u_1-1, \dots, u_l-u_{l-1}-1, n-u_l)$.
\end{definition}

\begin{example}
Take $m=8$, $n=10$, $u_1=6$, and $u_2=8$. Take $\alphaR_1=(2, 1, 2) \in \PF(3, 5)$, $\alphaR_2=(1) \in \PF(1, 1)$, and $\alphaR_3=(2, 1) \in \PF(2, 2)$. Then $(2, \overline{7},2, \underline{9}, \underline{10},1) \in \MS(3, 5, 1, 2)$ is a multi-shuffle of the three words $(2, 1, 2)$, $(7)$, and $(10, 9)$.
\end{example}

\begin{theorem}\label{main1}
Take $1\leq l \leq m$ any integer. Let $\uR=(u_1, \dots, u_l) \in [n]^l$ be in increasing order with $u_i\geq n-m+i$ for all $1\leq i\leq l$. Then $A_{\pi_{l+1}, \dots, \pi_m}=[\uR]$ if and only if $(\pi_{l+1}, \dots, \pi_m) \in \MS(m-n+u_1-1, u_1-1, u_2-u_1-1, \dots, u_l-u_{l-1}-1, n-u_l)$.
\end{theorem}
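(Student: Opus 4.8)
The plan is to exploit the $\Sym_m$ permutation symmetry to reduce the statement to the geometry of empty spots, and then to prove both implications through a barrier/non-interaction decomposition. First I would use invariance under $\Sym_m$ to let the fixed cars $\pi_{l+1}, \dots, \pi_m$ park \emph{before} the first $l$. Assuming they all park (which is necessary for $A_{\pi_{l+1}, \dots, \pi_m}$ to be nonempty), they leave $n-(m-l)=n-m+l$ empty spots $f_1 < \cdots < f_{n-m+l}$. The first step is a lemma identifying the maximal feasible $\uR$ with the top $l$ empty spots: a car with preference $p$ lands on the first free spot $\ge p$, so relabeling $f_1, \dots, f_{n-m+l}$ as $1, \dots, n-m+l$ converts the requirement ``$l$ cars with increasing preferences $u_1 < \cdots < u_l$ all park into the empty spots'' into the standard condition \eqref{pigeon} for $\PF(l, n-m+l)$. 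This yields that $\uR$ is feasible if and only if $u_j \le f_{n-m+j}$ for every $j$, whence $A_{\pi_{l+1},\dots,\pi_m} = [\uR]$ with $u_j = f_{n-m+j}$. In particular $A_{\pi_{l+1},\dots,\pi_m} = [\uR]$ for the prescribed $\uR$ exactly when the $l$ largest empty spots are $u_1 < \cdots < u_l$ and there are $n-m$ empty spots below $u_1$.

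For the ``only if'' direction I would assume $A_{\pi_{l+1},\dots,\pi_m}=[\uR]$, so the top $l$ empty spots are $u_1, \dots, u_l$. The crucial observation is that a spot empty at the end is never passed over from below, so each $u_j$ acts as a barrier: no fixed car prefers $u_j$, and no car with preference $< u_j$ can overflow past $u_j$. Consequently the fixed cars split according to which of the intervals $[1,u_1-1],\, (u_1,u_2),\, \dots,\, (u_{l-1},u_l),\, (u_l,n]$ contains their preference, and each group parks entirely within its own interval. The absence of empty spots strictly between consecutive $u_j$'s forces the intervals $(u_{j-1},u_j)$ and $(u_l,n]$ to be completely filled, so those groups are full parking functions $\PF(u_j-u_{j-1}-1,u_j-u_{j-1}-1)$ and $\PF(n-u_l,n-u_l)$ after shifting, while counting the $n-m$ empty spots below $u_1$ shows the first group is a $\PF(m-n+u_1-1,u_1-1)$. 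Reading off the preferences recovers a permutation of $\alphaR_1, \alphaR_2+(u_1,\dots,u_1), \dots, \alphaR_{l+1}+(u_l,\dots,u_l)$, i.e.\ membership in $\MS(m-n+u_1-1, u_1-1, \dots, u_l-u_{l-1}-1, n-u_l)$.

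Conversely, given the multi-shuffle, I would show the fixed cars always park leaving the empty set $\{f_1,\dots,f_{n-m}\}\cup\{u_1,\dots,u_l\}$, independently of the interleaving; the first-step lemma then identifies $\uR$ as maximal and gives $A_{\pi_{l+1},\dots,\pi_m}=[\uR]$. The main obstacle is precisely this interleaving: because $\MS$ permits the blocks $\alphaR_1,\dots,\alphaR_{l+1}$ to be shuffled in any order, I must show that a car belonging to block $\alphaR_j$ always parks inside its own interval $I_j$ even when cars from other blocks are interspersed. I plan to prove this by induction on the arrival order, maintaining the invariant that within each interval the occupied spots are exactly those of the already-arrived cars of that block; the induction step uses that every prefix of a parking function parks without overflowing its interval (which follows since the full block parks within $I_j$), together with the barrier property that cars of one block never reach another interval. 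Establishing this invariant rigorously, and thereby the genuine non-interaction of the shuffled blocks, is the technical heart of the argument; once it is in place the empty spots are exactly $u_1,\dots,u_l$ together with the $n-m$ spots below $u_1$, and the first-step lemma closes the proof.
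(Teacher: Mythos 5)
Your proposal is correct, but it follows a genuinely different route from the paper's proof. The paper argues statically through the criterion (\ref{pigeon}): for the forward direction it notes that $A_{\pi_{l+1},\dots,\pi_m}=[\uR]$ means $(u_1,\dots,u_l,\pi_{l+1},\dots,\pi_m)$ parks while each increment $\piR^i$ (with $u_i$ replaced by $u_i+1$) does not, which forces the exact counts $\#\{k:\pi_k\le u_i\}=m-n+u_i$; a car-swapping argument then rules out any later car preferring some $u_i$, and the block structure is read off from these counts. For the converse, the paper verifies (\ref{pigeon}) directly for $\piR$ and exhibits the counting violation $\#\{k:\pi^i_k\le u_i\}=m-n+u_i-1$ for each $\piR^i$, with no analysis of the order in which cars arrive. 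You instead work dynamically: by permutation symmetry you let the fixed cars park first, your key lemma identifies the maximal feasible $\uR$ as the top $l$ empty spots they leave ($u_j=f_{n-m+j}$), and the decomposition then follows from the barrier property of spots that remain empty. Your route buys an explicit, reusable description of the maximal element of $A_{\pi_{l+1},\dots,\pi_m}$ --- the same empty-spot picture that underlies Theorem \ref{main2} and the unattempted-spot results of Section \ref{random} --- and it renders the ``only if'' direction essentially transparent. The price is paid in the converse: you must establish the non-interaction invariant (each block's cars stay in their own interval under arbitrary interleaving), which you rightly flag as the technical heart and sketch by induction on arrival order using the prefix property of parking functions; this is correct and can be completed as you describe, but the paper's static verification of (\ref{pigeon}) sidesteps that induction entirely. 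One caveat applies equally to both arguments: since $A_{\pi_{l+1},\dots,\pi_m}$ is invariant under permuting its $l$ coordinates while $[\uR]$ is not, the identity $A_{\pi_{l+1},\dots,\pi_m}=[\uR]$ must be read on sorted representatives (equivalently, as ``$\uR\in A$ and $\uR$ is maximal''); your lemma makes this reading explicit, just as the opening paragraph of Section \ref{pfs} does for the paper.
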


\begin{proof}
``$\Longrightarrow$'' $A_{\pi_{l+1}, \dots, \pi_m}=[\uR]$ is equivalent to saying that $\piR=(u_1, \dots, u_l, \pi_{l+1}, \dots, \pi_m)$ is a parking function but $\piR^i=(u_1, \dots, u_{i-1}, u_i+1, u_{i+1}, \dots, u_l, \pi_{l+1}, \dots, \pi_m)$ is not for any $1 \leq i\leq l$. By (\ref{pigeon}), this could only happen when $\#\{k: \pi_k \leq u_i\}=m-n+u_i$ for all $1\leq i\leq l$. We claim that none of the subsequent $m-l$ cars can have preference $u_1, \dots, u_l$. Suppose otherwise and there is a later car with preference $u_i$. Such a car would necessarily park in spots $u_i+1, \dots, n$ for $\piR$, and consequently it could change places with car $i$ in $\piR^i$, contradicting the statement that $\pi_i=u_i$ is allowed but $\pi^i_i=u_i+1$ is not allowed. Hence excluding the first $l$ cars, $\piR$ has exactly $m-n+u_1-1$ cars with value $\leq u_1-1$, exactly $u_2-u_1-1$ cars with value $\geq u_1+1$ and $\leq u_2-1$, $\dots$, exactly $u_l-u_{l-1}-1$ cars with value $\geq u_{l-1}+1$ and $\leq u_l-1$, and exactly $n-u_l$ cars with value $\geq u_l+1$.

Let $\alphaR_1$ be the subsequence of $(\pi_{l+1}, \dots, \pi_m)$ with value $\leq u_1-1$, $\alphaR_2'$ be the subsequence with value $\geq u_1+1$ and $\leq u_2-1$, $\dots$, $\alphaR_l'$ be the subsequence with value $\geq u_{l-1}+1$ and $\leq u_l-1$, and $\alphaR_{l+1}'$ be the subsequence with value $\geq u_l+1$. Construct $\alphaR_2=\alphaR_2'-(u_1, \dots, u_1), \dots, \alphaR_{l+1}=\alphaR_{l+1}'-(u_l, \dots, u_l)$. It is clear from the above reasoning that $\alphaR_1 \in \PF(m-n+u_1-1, u_1-1), \alphaR_2 \in \PF(u_2-u_1-1, u_2-u_1-1), \dots, \alphaR_l \in \PF(u_l-u_{l-1}-1, u_l-u_{l-1}-1)$, and $\alphaR_{l+1} \in \PF(n-u_l, n-u_l)$. By Definition \ref{shuffle}, $(\pi_{l+1}, \dots, \pi_m) \in \MS(m-n+u_1-1, u_1-1, u_2-u_1-1, \dots, u_l-u_{l-1}-1, n-u_l)$.

``$\Longleftarrow$'' We first show that $\piR=(u_1, \dots, u_l, \pi_{l+1}, \dots, \pi_m)$ is a parking function. This is clear, since from Definition \ref{shuffle}, $\piR$ can be decomposed into $2l+1$ parts: a length $m-n+u_1-1$ subsequence $\alphaR_1$ with entries $\leq u_1-1$, one entry $u_1$, a length $u_2-u_1-1$ subsequence $\alphaR_2'$ with entries $\geq u_1+1$ and $\leq u_2-1$, one entry $u_2$, $\dots$, a length $u_l-u_{l-1}-1$ subsequence $\alphaR_l'$ with entries $\geq u_{l-1}+1$ and $\leq u_l-1$, one entry $u_l$, and a length $n-u_l$ subsequence $\alphaR_{l+1}'$ with entries $\geq u_l+1$. Moreover, $\alphaR_1, \alphaR_2=\alphaR_2'-(u_1, \dots, u_1), \dots, \alphaR_{l+1}=\alphaR_{l+1}'-(u_l, \dots, u_l)$ are $l+1$ parking functions.

Next we show that $\piR^i=(u_1, \dots, u_{i-1}, u_i+1, u_{i+1}, \dots, u_l, \pi_{l+1}, \dots, \pi_m)$ is not a parking function for any $1 \leq i\leq l$. But this is immediate since the only entries of $\piR^i$ that are bounded above by $u_i$ are those from $\alphaR_1, \alphaR_2', \dots, \alphaR_i'$ and $u_1, \dots, u_{i-1}$,
\begin{align}
\#\{k: \pi^i_k \leq u_i\}&=(m-n+u_1-1)+(u_2-u_1-1)+\cdots+(u_i-u_{i-1}-1)+i-1\notag \\
&=m-n+u_i-1<m-n+u_i,
\end{align}
a contradiction.

Combining, we have $A_{\pi_{l+1}, \dots, \pi_m}=[\uR]$.
\end{proof}

\begin{theorem}\label{component}
Take $1\leq l \leq m$ any integer. Let $\vR=(v_1, \dots, v_l) \in [n]^l$ be in increasing order. The number of parking functions $\piR\in \PF(m, n)$ with $\pi_1=v_1, \dots, \pi_l=v_l$ is
\begin{equation}
(n-m+1) \sum_{\sR \in S_l(\vR)} \binom{m-l}{\sR} (s_1+1+n-m)^{s_1-1}\prod_{i=2}^{l+1} (s_i+1)^{s_i-1},
\end{equation}
where
\begin{equation}
S_l(\vR)=\left\{\sR=(s_1, \dots, s_{l+1}) \in \mathbb{N}^{l+1} \left|\right. \substack{s_1+\cdots +s_i \geq m-n+v_i-i \hspace{.1cm} \forall i\in [l] \\ s_1+\cdots+s_{l+1}=m-l}\right\}.
\end{equation}
Note that this quantity stays constant if all $v_i \leq n-m+i$ and decreases as each $v_i$ increases past $n-m+i$ as there are fewer resulting summands.
\end{theorem}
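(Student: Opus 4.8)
The plan is to count the completions of the fixed prefix by summing over the ceiling vector $\uR$. The quantity sought is
\[
\#\{(\pi_{l+1},\dots,\pi_m): (v_1,\dots,v_l,\pi_{l+1},\dots,\pi_m)\in\PF(m,n)\}.
\]
By the discussion preceding Theorem \ref{main1}, for each choice of the last $m-l$ coordinates the feasible set of first $l$ coordinates is either empty or equals $[\uR]$ for a unique maximal $\uR$, and the prefix $\vR$ is feasible exactly when $\vR\leq_C\uR$. I would therefore partition the count according to $\uR$, writing it as $\sum_{\uR}\#\{(\pi_{l+1},\dots,\pi_m): A_{\pi_{l+1},\dots,\pi_m}=[\uR]\}$, where $\uR$ ranges over increasing vectors in $[n]^l$ with $u_i\geq n-m+i$ and $\vR\leq_C\uR$; since $\uR$ is determined by the last coordinates, this partition counts each feasible completion exactly once.

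Next I would apply Theorem \ref{main1} to rewrite each inner count as $|\MS(m-n+u_1-1,u_1-1,u_2-u_1-1,\dots,u_l-u_{l-1}-1,n-u_l)|$ and evaluate it. The structural point is that in Definition \ref{shuffle} the $l+1$ component words occupy pairwise disjoint value ranges, so each shuffled sequence decomposes uniquely into its components; hence the cardinality is the product of the interleaving count $\binom{m-l}{n_1,\dots,n_{l+1}}$, with $n_i$ the component lengths, and the numbers of admissible component parking functions. Applying $|\PF(a,b)|=(b-a+1)(b+1)^{a-1}$ to each factor gives $|\PF(m-n+u_1-1,u_1-1)|=(n-m+1)\,u_1^{\,n_1-1}$, each middle block $(u_i-u_{i-1})^{\,n_i-1}$, and the last block $(n-u_l+1)^{\,n_{l+1}-1}$; a short telescoping check confirms $n_1+\cdots+n_{l+1}=m-l$ and lets me pull the single factor $(n-m+1)$ out front.

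To reach the stated formula I would substitute $s_1=m-n+u_1-1$, $s_i=u_i-u_{i-1}-1$ for $2\leq i\leq l$, and $s_{l+1}=n-u_l$, which is exactly the list of component lengths $n_i$. This is a bijection between increasing $\uR\in[n]^l$ with $u_i\geq n-m+i$ and vectors $\sR\in\mathbb{N}^{l+1}$ with $s_1+\cdots+s_{l+1}=m-l$; the inverse $u_i=s_1+\cdots+s_i+n-m+i$ makes $u_i\geq n-m+i$ and $u_l=n-s_{l+1}\leq n$ automatic. Under it the three factor types become $(s_1+1+n-m)^{s_1-1}$, $(s_i+1)^{s_i-1}$, and $(s_{l+1}+1)^{s_{l+1}-1}$, and the constraint $\vR\leq_C\uR$ turns into $s_1+\cdots+s_i\geq m-n+v_i-i$, i.e. $\sR\in S_l(\vR)$. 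I expect the bookkeeping here — both the exact evaluation of $|\MS|$ (the uniqueness of decomposition underpinning the multinomial-times-product count) and the verification that the change of variables matches the constraint sets at the boundaries $u_i\leq n$ and $u_i\geq n-m+i$ — to be the main obstacle.

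The concluding remark then follows by reading off the defining inequalities of $S_l(\vR)$: the $i$th one, $s_1+\cdots+s_i\geq m-n+v_i-i$, has nonpositive right-hand side exactly when $v_i\leq n-m+i$ and is then vacuous, so $S_l(\vR)$ is independent of $\vR$ as long as all $v_i\leq n-m+i$ and loses summands as any $v_i$ increases past $n-m+i$.
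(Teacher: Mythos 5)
Your proposal is correct and follows essentially the same route as the paper's proof: partition completions by the maximal vector $\uR$ from Theorem \ref{main1}, evaluate each multi-shuffle class as a multinomial coefficient times products of $|\PF(\cdot,\cdot)|$, and apply the change of variables $\sR=(m-n+u_1-1,\,u_2-u_1-1,\dots,u_l-u_{l-1}-1,\,n-u_l)$ to convert the constraints $u_i\geq\max(v_i,n-m+i)$ into the defining inequalities of $S_l(\vR)$. The only difference is that you spell out details the paper leaves implicit (disjointness of the classes, uniqueness of the shuffle decomposition, and the bijectivity of the substitution), which is fine.
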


\begin{proof}
If $\pi_i=v_i$ for $1\leq i\leq l$, then $A_{\pi_{l+1}, \dots, \pi_m}=[\uR]$ where $u_i\geq \max(v_i, n-m+i)$. Thus from Theorem \ref{main1}, the number of parking functions with $\pi_1=v_1, \dots, \pi_l=v_l$ is
\begin{align}
&\sum_{u_i=\max(v_i, n-m+i) \hspace{.1cm} \forall i\in [l]}^{n-l+i} \binom{m-l}{\sR} |\PF(m-n+u_1-1, u_1-1)| \cdot \notag \\
&\hspace{3cm} \cdot \prod_{i=2}^l |\PF(u_i-u_{i-1}-1, u_i-u_{i-1}-1)||\PF(n-u_l, n-u_l)| \notag \\
&=\sum_{u_i=\max(v_i, n-m+i) \hspace{.1cm} \forall i\in [l]}^{n-l+i} \binom{m-l}{\sR} (n-m+1) u_1^{m-n+u_1-2} \prod_{i=2}^l (u_i-u_{i-1})^{u_i-u_{i-1}-2}(n-u_l+1)^{n-u_l-1} \notag \\
&=(n-m+1) \sum_{\sR \in S_l(\vR)} \binom{m-l}{\sR} (s_1+1+n-m)^{s_1-1}\prod_{i=2}^{l+1} (s_i+1)^{s_i-1}, \label{withk}
\end{align}
where $\sR=(m-n+u_1-1, u_2-u_1-1, \dots, u_l-u_{l-1}-1, n-u_l)$.
\end{proof}

For the special case $l=0$ and $\vR=()$ (where no parking preferences are specified), we recover the total number of parking functions $|\PF(m, n)|=(n-m+1)(n+1)^{m-1}$. We describe an alternative characterization of this number in the following.

\begin{proposition}\label{un2}
The number of parking functions $|\PF(m, n)|$ satisfies
\begin{equation}
|\PF(m, n)|=\sum_{\sR \models m} \binom{m}{\sR} \prod_{i=1}^{n-m+1} (s_i+1)^{s_i-1},
\end{equation}
where $\sR=(s_1, \dots, s_{n-m+1})$ is a composition of $m$.
\end{proposition}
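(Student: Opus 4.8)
The plan is to prove the identity bijectively, by decomposing each parking function along its empty spots into a collection of non-interacting square parking functions and then reading off the count. The closed form $|\PF(m,n)|=(n-m+1)(n+1)^{m-1}$ is not even needed for this: the bijection shows directly that the right-hand side enumerates $\PF(m,n)$, so the two sides agree.

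The starting observation is that, since all $m$ cars park, exactly $n-m$ of the $n$ spots remain empty. The key structural fact is that an empty spot acts as a hard barrier. Indeed, if spot $j$ is never occupied then it is available throughout the process, so a car with preference $\leq j$ can never park to the right of $j$ (passing $j$ would require $j$ to be occupied at that moment), while a car with preference $>j$ necessarily parks to the right of $j$; in particular no car prefers $j$. Iterating over the $n-m$ empty spots shows they split $[n]$ into $n-m+1$ consecutive blocks of occupied spots (some possibly empty), and the cars parking in a given block are exactly those whose preferences lie in that block's range, with no interaction across blocks.

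Encode the occupancy pattern by the block sizes $\sR=(s_1,\dots,s_{n-m+1})\in\mathbb N^{n-m+1}$ with $\sum_i s_i=m$, where the $i$-th block is the (possibly empty) run of occupied spots lying strictly between the $(i-1)$-th and $i$-th empty spots; this weak composition determines the positions of all blocks and all empty spots. Translating the $i$-th block down to $[1,s_i]$, the barrier property forces its restricted preferences to fill it exactly, so they form a classical parking function $\alphaR_i\in\PF(s_i,s_i)$. Conversely, given $\sR$, a choice of $\alphaR_i\in\PF(s_i,s_i)$ for each $i$, and an interleaving of the $m$ cars among the blocks, the barrier property guarantees that shuffling the shifted words $\alphaR_i$—which occupy pairwise disjoint value ranges separated by the empty spots—produces a parking function in $\PF(m,n)$ with exactly this occupancy; this gluing is of the same nature as the ``$\Longleftarrow$'' direction of Theorem \ref{main1}, though here all $n-m+1$ components are square. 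The resulting map $\piR\mapsto(\sR,(\alphaR_i)_i,\text{interleaving})$ is therefore a bijection.

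Counting is now immediate: for fixed $\sR$ there are $\binom{m}{\sR}$ ways to interleave the ordered car indices into blocks of sizes $s_1,\dots,s_{n-m+1}$, and the $i$-th block independently contributes $|\PF(s_i,s_i)|=(s_i+1)^{s_i-1}$ choices, where the convention $(s_i+1)^{s_i-1}=1$ for $s_i=0$ correctly accounts for an empty block. Summing over all weak compositions $\sR$ of $m$ into $n-m+1$ parts yields the stated formula. The step needing the most care is the barrier property together with clean bookkeeping of the degenerate cases ($s_i=0$, or an empty spot at position $1$ or $n$). As an independent check, the right-hand side equals $m!\,[z^m]F(z)^{n-m+1}$ by the exponential formula; writing $T(z)=zF(z)$ so that $T=ze^{T}$ and applying Abel's multinomial theorem (equivalently Lagrange inversion) to $[z^{n+1}]T(z)^{n-m+1}$ recovers $(n-m+1)(n+1)^{m-1}=|\PF(m,n)|$.
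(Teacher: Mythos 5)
Your proof is correct and takes essentially the same approach as the paper's: both decompose a parking function along its $n-m$ empty (equivalently, never-attempted) spots into $n-m+1$ non-interacting classical parking functions, count the interleavings of cars with the multinomial coefficient $\binom{m}{\sR}$, and sum over weak compositions of $m$. The differences are only presentational — you spell out the barrier property and the converse gluing that the paper leaves implicit, and you append an optional Lagrange-inversion consistency check.
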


\begin{proof}
For a parking function $\piR\in \PF(m, n)$, there are $n-m$ parking spots that are never attempted by any car. Let $k_i(\piR)$ for $i=1, \dots, n-m$ represent these spots, so that $0:=k_0<k_1<\cdots<k_{n-m}<k_{n-m+1}:=n+1$. This separates $\piR$ into $n-m+1$ disjoint non-interacting segments (some segments might be empty), with each segment a classical parking function of length $(k_{i}-k_{i-1}-1)$ after translation. We have
\begin{align}
&|\PF(m, n)|=\sum_{k} \prod_{i=1}^{n-m+1} (k_{i}-k_{i-1})^{k_{i}-k_{i-1}-2} \binom{m}{k_1-k_0-1, \dots, k_{n-m+1}-k_{n-m}-1} \notag \\
&=\sum_{\sR \models m} \binom{m}{s_1, \dots, s_{n-m+1}} \prod_{i=1}^{n-m+1} (s_i+1)^{s_i-1},
\end{align}
where $\sR=(k_1-k_0-1, \dots, k_{n-m+1}-k_{n-m}-1)$ and $\sum_{i=1}^{n-m+1} s_i=m$.
\end{proof}

Building upon Theorem \ref{component} and Proposition \ref{un2}, we specialize to the case that the specified parking preferences of the first $l$ cars consist of a contiguous block.

\begin{proposition}\label{un}
Take $1\leq l \leq m$ any integer. Let $1\leq k\leq n-l+1$. The number of parking functions $\piR\in \PF(m, n)$ with $\pi_1=k, \dots, \pi_l=k+l-1$ is
\begin{equation}
(n-m+1) \sum_{s=0}^{\min(n-k-l+1, m-l)}\binom{m-l}{s} (n-s+1-l)^{m-s-l-1} l (s+l)^{s-1}.
\end{equation}
Note that this quantity stays constant for $k\leq n-m+1$ and decreases as $k$ increases past $n-m+1$ as there are fewer resulting summands.
\end{proposition}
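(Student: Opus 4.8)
The plan is to specialize Theorem~\ref{component} to the contiguous block $v_i=k+i-1$ and then collapse the $(l+1)$-fold sum over $\sR$ into the single sum in the statement.

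First I would substitute $v_i=k+i-1$ into the description of $S_l(\vR)$. The crucial observation is that the threshold $m-n+v_i-i=m-n+k-1$ is then \emph{independent of} $i$, so every defining inequality reads $s_1+\cdots+s_i\ge m-n+k-1$. Because the partial sums $s_1+\cdots+s_i$ are nondecreasing in $i$ (the $s_j$ being nonnegative), the whole family of constraints collapses to the single constraint $s_1\ge m-n+k-1$. Thus $S_l(\vR)=\{\sR\in\mathbb{N}^{l+1}: s_1\ge m-n+k-1,\ s_1+\cdots+s_{l+1}=m-l\}$, and the sum in Theorem~\ref{component} decouples the first coordinate from the rest.

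Next I would write $t=s_1$ and factor the multinomial coefficient as $\binom{m-l}{\sR}=\binom{m-l}{t}\binom{m-l-t}{s_2,\dots,s_{l+1}}$, pulling out the factor $(t+1+n-m)^{t-1}$ that depends only on $t$. Setting $r=m-l-t$, the residual inner sum is $\sum \binom{r}{s_2,\dots,s_{l+1}}\prod_{i=2}^{l+1}(s_i+1)^{s_i-1}$ taken over nonnegative $(s_2,\dots,s_{l+1})$ with total $r$. This is exactly the sum appearing in Proposition~\ref{un2} for $l$ parts summing to $r$; applying that proposition with $m'=r$ cars and $n'=r+l-1$ spots (so that $n'-m'+1=l$) evaluates it to $|\PF(r,r+l-1)|=l\,(r+l)^{r-1}$. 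This evaluation is the heart of the argument, and the main step requiring genuine insight.

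Finally I would change the summation variable to $s=r=m-l-t$. Under this substitution $\binom{m-l}{t}=\binom{m-l}{s}$, the factor $(t+1+n-m)^{t-1}$ becomes $(n-s+1-l)^{m-s-l-1}$, and the inner value becomes $l\,(s+l)^{s-1}$, reproducing the claimed summand after multiplication by the global prefactor $(n-m+1)$. It remains to track the range: $t\le m-l$ gives $s\ge 0$, while $t\ge\max(0,m-n+k-1)$ gives $s\le\min(m-l,\,n-k-l+1)$, which is exactly the stated upper limit. The closing remark follows at once: when $k\le n-m+1$ the constraint $s_1\ge m-n+k-1$ is vacuous, $n-k-l+1\ge m-l$, and the sum runs over the full range $0\le s\le m-l$ independently of $k$; as $k$ grows past $n-m+1$ the upper limit $n-k-l+1$ drops below $m-l$ and summands are lost. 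The constraint reduction and the range bookkeeping are routine, so the argument hinges entirely on recognizing the inner sum as an instance of Proposition~\ref{un2}.
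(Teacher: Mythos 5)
Your proposal is correct and follows essentially the same route as the paper's own proof: specialize Theorem~\ref{component} to $v_i=k+i-1$, note the constraints collapse to a single lower bound on $s_1$, extract $s_1$ from the multinomial coefficient, evaluate the inner sum via Proposition~\ref{un2} as $|\PF(m-l-s_1,\,m-1-s_1)|=l\,(m-s_1)^{m-l-s_1-1}$, and finish with the change of variables $s=m-l-s_1$. Your explicit justification of the constraint collapse (monotonicity of partial sums) is a detail the paper leaves implicit, but the argument is otherwise identical.
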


\begin{proof}
We take $v_i=k+i-1$ for $1\leq i\leq l$ in Theorem \ref{component} and extract $s_1$ from the multinomial coefficient $\binom{m-l}{\sR}$:
\begin{multline}
(n-m+1) \sum_{s_1=\max(0, m-n+k-1)}^{m-l} \binom{m-l}{s_1} (s_1+1+n-m)^{s_1-1} \cdot \\ \cdot \sum_{(s_2, \dots, s_{l+1}) \models m-l-s_1} \binom{m-l-s_1}{s_2, \dots, s_{l+1}} \prod_{i=2}^{l+1} (s_i+1)^{s_i-1}.
\end{multline}
Using Proposition \ref{un2} and simplifying this becomes
\begin{align}
&(n-m+1) \sum_{s_1=\max(0, m-n+k-1)}^{m-l} \binom{m-l}{s_1} (s_1+1+n-m)^{s_1-1} |\PF(m-l-s_1, m-1-s_1)| \notag \\
&=(n-m+1) \sum_{s_1=\max(0, m-n+k-1)}^{m-l} \binom{m-l}{s_1} (s_1+1+n-m)^{s_1-1} l (m-s_1)^{m-l-s_1-1} \notag \\
&=(n-m+1) \sum_{s=0}^{\min(n-k-l+1, m-l)} \binom{m-l}{s} (n-s+1-l)^{m-s-l-1} l (s+l)^{s-1},
\end{align}
where the last equality is a change of variables $s=m-l-s_1$.
\end{proof}

Summing over all possible contiguous blocks that the first $l$ cars may occupy, the result simplifies nicely.

\begin{proposition}\label{decomposition}
Take $1\leq l \leq m$ any integer. Then
\begin{equation}
\sum_{k=1}^{n-l+1} \#\{\piR\in \PF(m, n): \pi_1=k, \dots, \pi_l=k+l-1\}=(n-m+1)(n+1)^{m-l}.
\end{equation}
\end{proposition}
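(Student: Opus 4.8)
The plan is to evaluate the sum directly by substituting the closed form from Proposition \ref{un} and then recognizing the resulting single sum as an instance of Abel's binomial identity. Pulling the common factor $(n-m+1)$ out front, the left-hand side becomes
\begin{equation}
(n-m+1)\sum_{k=1}^{n-l+1}\ \sum_{s=0}^{\min(n-k-l+1,\,m-l)}\binom{m-l}{s}(n-s+1-l)^{m-s-l-1}\,l\,(s+l)^{s-1},
\end{equation}
so it suffices to show that the double sum equals $(n+1)^{m-l}$.

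First I would interchange the order of summation. For a fixed $s$ with $0\le s\le m-l$, the constraint $s\le n-k-l+1$ is equivalent to $k\le n-s-l+1$; since $s\ge 0$ this bound is at most $n-l+1$, so the admissible range is $1\le k\le n+1-l-s$, contributing exactly $n+1-l-s$ values of $k$ (this count is positive because $s\le m-l\le n-l$, using $m\le n$). The summand no longer depends on $k$, so summing over $k$ simply multiplies it by $n+1-l-s$, and this factor combines with $(n-s+1-l)^{m-s-l-1}$ to raise the exponent by one. The double sum thus collapses to
\begin{equation}
\sum_{s=0}^{m-l}\binom{m-l}{s}\,l\,(s+l)^{s-1}\,(n+1-l-s)^{m-l-s}.
\end{equation}

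The final step is to identify this with Abel's binomial identity
\begin{equation}
(x+y)^{N}=\sum_{s=0}^{N}\binom{N}{s}\,x\,(x+s)^{s-1}\,(y-s)^{N-s},
\end{equation}
taking $N=m-l$, $x=l$, and $y=n+1-l$. With these choices $x+s=s+l$, $y-s=n+1-l-s$, and $x+y=n+1$, so the sum equals $(n+1)^{m-l}$, completing the proof. The only points that need care are the bookkeeping on the summation range after the swap — ensuring the number of $k$-values is exactly $n+1-l-s$ and that this range is nonempty — and the boundary term $s=0$, where the factor $l\,(s+l)^{s-1}=l\cdot l^{-1}=1$ is well defined precisely because $l\ge 1$; matching the parameters in Abel's identity is then routine.
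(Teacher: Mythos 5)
Your proof is correct, but it takes a genuinely different route from the paper. The paper proves Proposition \ref{decomposition} by an extension of Pollak's circle argument: add a spot $n+1$ and arrange the spots in a circle, choose the contiguous block for the first $l$ cars in $n+1$ ways and the remaining preferences in $(n+1)^{m-l}$ ways, and observe that exactly a fraction $\frac{n-m+1}{n+1}$ of all rotations of any preference sequence leave spot $n+1$ empty. That argument is self-contained (it does not need Proposition \ref{un}), and it explains conceptually why the count is so clean. You instead verify the identity algebraically: substitute the closed form of Proposition \ref{un}, interchange the order of summation so that the $k$-sum contributes the factor $n+1-l-s$ (your bookkeeping here is right --- the constraint $s\le n-k-l+1$ gives exactly $n+1-l-s$ admissible values of $k$, and this is positive because $s\le m-l\le n-l$), absorb that factor into the power $(n-s+1-l)^{m-s-l-1}$, and recognize Abel's binomial identity
\begin{equation}
(x+y)^{N}=\sum_{s=0}^{N}\binom{N}{s}\,x\,(x+s)^{s-1}\,(y-s)^{N-s}
\end{equation}
with $N=m-l$, $x=l$, $y=n+1-l$. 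This identity is exactly the specialization (\ref{2}) of the paper's Theorem \ref{Abel} (with two blocks, after multiplying through by $x_1$), so it is available within the paper's own toolkit, though the paper only introduces it later in Section \ref{special}; since Proposition \ref{un} rests on Theorem \ref{component} and Proposition \ref{un2}, there is no circularity. What your approach buys is a demonstration that Proposition \ref{decomposition} is a formal corollary of Proposition \ref{un} via Abel summation; what the paper's approach buys is independence from the enumerative formula and a bijective explanation of the factor $(n-m+1)(n+1)^{m-l}$, which also serves as a consistency check on Proposition \ref{un} rather than a consequence of it.
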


\begin{proof}
The proof relies on an extension of Pollak's circle argument \cite{Pollak}. Add an additional space $n+1$, and arrange the spaces in a circle. Allow $n+1$ also as a preferred space. We first select a contiguous block of length $l$ for the first $l$ cars, which can be done in $n+1$ ways. Then for the remaining $m-l$ cars, there are $(n+1)^{m-l}$ possible preference sequences. Note that $\piR$ is a parking function if and only if the spot $n+1$ is left open. For $j \in \mathbb{Z}/(n+1)\mathbb{Z}$, the preference sequence $\pi+j(1, \dots, 1)$ (modulo $n+1$) gives an assignment whose missing spaces are the rotations by $j$ of the missing spaces for the assignment of
$\piR$. Since there are $n-m+1$ missing spaces for the assignment of any preference sequence, any preference sequence $\piR$ has $n-m+1$ rotations which are parking functions. Therefore
\begin{align}
\sum_{k=1}^{n-l+1} \#\{\piR\in \PF(m, n): \pi_1=k, \dots, \pi_l=k+l-1\}&=
\frac{n-m+1}{n+1}(n+1)(n+1)^{m-l}\notag\\
&=(n-m+1)(n+1)^{m-l}.
\end{align}
\end{proof}

For the special case $l=1$, Proposition \ref{decomposition} reduces to the decomposition of parking functions $\PF(m, n)$ according to the parking preference of the first car $\pi_1$.

\subsection{Connections with Pitman-Stanley polytopes}
\label{completion}
Denote the set of $\uR$-parking functions by $\PF(\uR)$. The following propositions are direct consequences of the parking criterion (\ref{pigeon}) and are equivalent in nature. See the beginning of Section \ref{pfs} for more explanation.

\begin{proposition}\label{forward}
Take $1\leq l\leq m$ any integer. Let $\vR=(v_1, \dots, v_l) \in [n]^l$ be in increasing order. Then $\piR=(v_1, \dots, v_l, \pi_{l+1}, \dots, \pi_{m}) \in \PF(m, n)$ if and only if $(\pi_{l+1}, \dots, \pi_{m}) \in \PF(\uR)$, where the $u_i$'s are the largest $m-l$ numbers in $\{n-m+1, \dots, n\} \texttt{\char92} \{v_1, \dots, v_l\}$, arranged in increasing order.
\end{proposition}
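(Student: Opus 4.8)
The plan is to reduce both sides of the claimed equivalence to a single ``tail-counting'' criterion and then match them threshold by threshold. Rewriting (\ref{pigeon}) as $\#\{k:\pi_k>i\}\leq n-i$ for $i\in[n-m+1,n]$ (the inequality being vacuous for $i$ outside this range), we have that $\piR=(v_1,\dots,v_l,\pi_{l+1},\dots,\pi_m)\in\PF(m,n)$ if and only if, for every integer $t$,
\be
\#\{k\in[m]:\pi_k>t\}\leq \#\{j\in[n-m+1,n]:j>t\}.
\ee
I would then invoke the analogous characterization for a general $\uR$-parking function: writing $(\mu_1,\dots,\mu_{m-l})$ for the non-decreasing rearrangement of $(\pi_{l+1},\dots,\pi_m)$, the condition $\mu_i\leq u_i$ for all $i$ is equivalent, for increasing $\uR$, to $\#\{k>l:\pi_k>t\}\leq\#\{i\in[m-l]:u_i>t\}$ for every $t$. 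This is the standard dominance reformulation of the non-decreasing-rearrangement inequalities; I would either include its short two-line proof (take $t=u_i$ in one direction, read off the $i$-th smallest value in the other) or cite it as folklore.

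Write $g(t)=\#\{j\in[n-m+1,n]:j>t\}$, $f(t)=\#\{j\in[l]:v_j>t\}$, and $h(t)=\#\{i\in[m-l]:u_i>t\}$. Splitting the count over the first $l$ and the last $m-l$ coordinates, the full parking condition reads $\#\{k>l:\pi_k>t\}\leq g(t)-f(t)$ for all $t$, while the $\uR$-parking condition reads $\#\{k>l:\pi_k>t\}\leq h(t)$ for all $t$. The heart of the proof is therefore the identity
\be
h(t)=\min\bigl(m-l,\;g(t)-f(t)\bigr)\qquad\text{for every integer }t. \label{keyid}
\ee
Granting (\ref{keyid}), the two inequality systems coincide: since $\#\{k>l:\pi_k>t\}\leq m-l$ holds automatically (there are only $m-l$ such coordinates), the truncation at $m-l$ is free, and $\#\{k>l:\pi_k>t\}\leq h(t)$ becomes equivalent to $\#\{k>l:\pi_k>t\}\leq g(t)-f(t)$. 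This yields the proposition.

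To prove (\ref{keyid}) I would set $S=\{n-m+1,\dots,n\}\setminus\{v_1,\dots,v_l\}$, so that $\uR$ consists of the largest $m-l$ elements of $S$ (note $|S|\geq m-l$), and argue by cases on $t$. For $t<n-m$ every slot and every $u_i$ exceeds $t$, giving $g(t)=m$, $h(t)=m-l$, and $g(t)-f(t)=m-f(t)\geq m-l$, so both sides equal $m-l$. For $t\geq n-m$, any out-of-range preference $v_j\leq n-m$ satisfies $v_j\leq t$ and so drops out of $f(t)$; hence $f(t)$ counts exactly the in-range $v_j$ exceeding $t$, and $g(t)-f(t)=\#\{s\in S:s>t\}$. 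Since $\uR$ is the top $m-l$ elements of $S$, one has $h(t)=\#\{s\in S:s>t\}$ when this number is at most $m-l$ (those elements all lie in $\uR$) and $h(t)=m-l$ otherwise (the top $m-l$ all exceed $t$), which is exactly $\min\bigl(m-l,\#\{s\in S:s>t\}\bigr)$. This establishes (\ref{keyid}).

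The main obstacle is precisely (\ref{keyid}), and within it the bookkeeping forced by two features of the statement: that $\uR$ retains only the \emph{largest} $m-l$ surviving slots (so the naive pointwise identity $h(t)=g(t)-f(t)$ genuinely fails for small $t$ and must be repaired by the $\min$), and that some $v_j$ may lie below $n-m+1$, contributing to $f(t)$ without removing a slot from $S$. I would also record where the hypothesis ``$\vR$ in increasing order'' enters as strict monotonicity: distinctness of the $v_j$ is exactly what makes the set difference defining $S$ account correctly for the removed slots, so that $g(t)-f(t)=\#\{s\in S:s>t\}$ holds; were repeats allowed, this bookkeeping, and with it the equivalence, would fail.
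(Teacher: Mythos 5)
Your proof is correct. For comparison: the paper offers essentially no argument for this proposition, declaring it a ``direct consequence'' of the criterion (\ref{pigeon}); the intended reading, given at the start of Section \ref{pfs}, is that $\piR\in\PF(m,n)$ iff its non-decreasing rearrangement satisfies $\lambda_i\le n-m+i$ for all $i$, so that once the distinct values $v_1<\cdots<v_l$ occupy $l$ of the thresholds $n-m+1,\dots,n$, the remaining coordinates must fit under the surviving thresholds, which are exactly the $u_i$'s. Your route dualizes this: you pass to tail counts $\#\{k:\pi_k>t\}$, invoke the dominance reformulation of rearrangement inequalities, and reduce the whole statement to the identity $h(t)=\min\bigl(m-l,\,g(t)-f(t)\bigr)$, verified by cases. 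This is wordier than what the paper had in mind, but it is airtight, and it makes explicit two points the paper glosses over: why the truncation at $m-l$ is harmless (only $m-l$ coordinates remain), and why a preference $v_j\le n-m$ contributes to the count without removing a slot from $S$. Your closing observation that distinctness of the $v_j$ is essential is also correct and worth keeping: with repeats the statement fails (e.g.\ $m=n=3$, $\vR=(2,2)$ forces $\pi_3=1$ on the left, while $\uR=(3)$ admits any $\pi_3\le 3$ on the right), so ``increasing order'' must be read strictly --- a hypothesis the paper leaves implicit. One cosmetic slip: your case split at $t<n-m$ versus $t\ge n-m$ places $t=n-m$ in the second case even though your first-case description (``every slot exceeds $t$'') applies to it; no harm results, since the second-case derivation of $g(t)-f(t)=\#\{s\in S:s>t\}$ and of $h(t)=\min\bigl(m-l,\#\{s\in S:s>t\}\bigr)$ is valid at $t=n-m$ as well, but you should state the boundary consistently.
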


\begin{proposition}\label{backward}
Let $\uR=(u_1, \dots, u_m)$ be a positive-integer-valued vector with $u_1 < \cdots < u_m$. Let $\vR=(v_1, \dots, v_l)=[u_1, u_m] \texttt{\char92} \{u_1, \dots, u_m\}$, arranged in increasing order. Then $\piR=(v_1, \dots, v_l, \pi_1, \dots, \pi_m) \in \PF(u_m-u_1+1, u_m)$ if and only if $(\pi_1, \dots, \pi_m) \in \PF(\uR)$.
\end{proposition}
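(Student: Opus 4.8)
The plan is to reduce both directions of the claimed equivalence to the same family of threshold inequalities coming from the parking criterion (\ref{pigeon}), and then match them. Write $N=u_m$ and $M=u_m-u_1+1$, so that the ambient set is $\PF(M,N)$ and the relevant index range $\{N-M+1,\dots,N\}$ is precisely the interval $[u_1,u_m]$. The one structural fact the whole argument rests on is that, by construction, the entries of $\vR$ are exactly the integers of $[u_1,u_m]$ that are absent from $\{u_1,\dots,u_m\}$; so $\{u_1,\dots,u_m\}$ and $\{v_1,\dots,v_l\}$ partition $[u_1,u_m]$, and in particular $l=M-m$.

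First I would rewrite each membership condition in counting form. For the full word $\piR=(v_1,\dots,v_l,\pi_1,\dots,\pi_m)$, criterion (\ref{pigeon}) says $\piR\in\PF(M,N)$ iff $\#\{k:\piR_k\le t\}\ge M-N+t=t-u_1+1$ for every integer $t\in[u_1,u_m]$, where $\#\{k:\piR_k\le t\}$ counts the coordinates of $\piR$ not exceeding $t$. On the other side, unwinding the definition of a $\uR$-parking function through its non-decreasing rearrangement $\lambdaR$ (using $\lambda_i\le u_i \iff \#\{k:\pi_k\le u_i\}\ge i$) gives $(\pi_1,\dots,\pi_m)\in\PF(\uR)$ iff $\#\{k:\pi_k\le u_i\}\ge i$ for all $i\in[m]$.

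Next I would split the count for $\piR$ as $\#\{k:\piR_k\le t\}=\#\{j:v_j\le t\}+\#\{k:\pi_k\le t\}$ and apply the gap-filling identity: since every integer of $[u_1,t]$ is either a $u$-value or a $v$-value, $\#\{j:v_j\le t\}=(t-u_1+1)-\#\{i:u_i\le t\}$ (one checks the endpoints $t=u_1$ and $t=u_m$ directly, giving $0$ and $l$). Substituting collapses the $\PF(M,N)$ condition to the clean inequality $\#\{k:\pi_k\le t\}\ge\#\{i:u_i\le t\}$ for all integer $t\in[u_1,u_m]$. Writing $t=u_i$ and noting $\#\{i':u_{i'}\le u_i\}=i$ shows that the $\uR$-parking condition is exactly this same inequality, but imposed only at the thresholds $t\in\{u_1,\dots,u_m\}$.

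The two conditions therefore differ only in which thresholds are tested, and reconciling them is where the last bit of care is needed; I expect this to be the main (though light) obstacle. One direction is immediate, since testing at all integers of $[u_1,u_m]$ is stronger than testing at the $u$-values alone, giving $\PF(M,N)\Rightarrow\PF(\uR)$. For the converse I would interpolate across the gaps: if $t$ is not a $u$-value, say $u_i<t<u_{i+1}$, then $\#\{i':u_{i'}\le t\}$ is unchanged from its value at $u_i$, while $\#\{k:\pi_k\le t\}$ can only have grown, so the inequality verified at $u_i$ forces it at $t$. This upgrades the $\uR$-parking condition to the full $\PF(M,N)$ condition and closes the equivalence. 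I would also record the essentially instantaneous alternative route: when $l\ge 1$, Proposition \ref{backward} is the specialization of Proposition \ref{forward} to $n=u_m$ and $m=u_m-u_1+1$, for which $\{n-m+1,\dots,n\}=[u_1,u_m]$ and the complement of $\vR$ is all of $\{u_1,\dots,u_m\}$ (the degenerate case $l=0$, where $\uR$ is already an interval, being trivial).
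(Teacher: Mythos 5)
Your proof is correct and takes essentially the same approach as the paper: the paper offers no details beyond asserting that Proposition \ref{backward} is a direct consequence of the parking criterion (\ref{pigeon}), and your argument---rewriting both membership conditions as counting inequalities via the non-decreasing rearrangement, using the gap-filling identity, and interpolating across the gaps between consecutive $u_i$'s---is exactly the worked-out version of that assertion. Your closing observation that \ref{backward} specializes \ref{forward} (with $n=u_m$, total car count $u_m-u_1+1$) likewise matches the paper's remark that the two propositions are ``equivalent in nature.''
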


Knowledge on $\PF(\uR)$ thus lends knowledge on $\PF(m, n)$ where the first $l$ cars have specified parking preferences, with $l$ depending on the gaps in $\uR$, and vice versa. In \cite{PS1}, Pitman and Stanley introduced an $m$-dimensional polytope $\Pi_m$ and related the number of $\uR$-parking functions to the volume polynomial of $\Pi_m$. Let $\xR=(x_1, \dots, x_m)$ with $x_i>0$ for all $i$. Let
\begin{equation}
\Pi_m(\xR)=\left\{\yR\in \mathbb{R}^m: y_i\geq 0 \text{ and } y_1+\cdots+y_i \leq x_1+\cdots+x_i, \hspace{.2cm} \forall i\in [m] \right\}.
\end{equation}
The $m$-dimensional volume $V_m(\xR)=\text{Vol}(\Pi_m(\xR))$ is a homogeneous polynomial of degree $m$ in the variables $x_1, \dots, x_m$, and is called the volume polynomial of the Pitman-Stanley polytope. The volume definition may be extended when some of the $x_i$'s equal zero for $2\leq i\leq m$. Trivially, we take $V_m(\xR)=0$ if $x_1=0$.

\begin{theorem}[adapted from Pitman and Stanley \cite{PS1}]\label{Pitman-Stanley}
Take $m\geq 1$ any integer. Let $\uR=(u_1, \dots, u_m) \in \mathbb{N}^m$ with $u_1 \leq \cdots \leq u_m$. Let $\xR=\Delta \uR=(u_1, u_2-u_1, \dots, u_m-u_{m-1}) \in \mathbb{N}^m$. The number of $\uR$-parking functions $|\PF(\uR)|=m!V_m(\xR)$, where the volume polynomial
\begin{equation}
V_m(\xR)=\sum_{\kR \in K_m} \prod_{i=1}^m \frac{x_i^{k_i}}{k_i!}=\frac{1}{m!} \sum_{\kR \in K_m} \binom{m}{k_1, \dots, k_m} x_1^{k_1} \dots x_m^{k_m},
\end{equation}
and $K_m$ is the set of balanced vectors of length $m$, i.e.
\begin{equation}
K_m=\{\kR \in \mathbb{N}^m: k_1+\cdots+k_i \geq i, \hspace{.2cm} \forall i\in [m-1] \text{ and } k_1+\cdots+k_m=m \}.
\end{equation}
\end{theorem}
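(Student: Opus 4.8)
The plan is to prove the formula by a direct combinatorial count of $\uR$-parking functions, organized according to how the entries of a sequence distribute among $m$ consecutive ``levels'' of the value range. Setting $u_0 := 0$, I would partition $\{1, \dots, u_m\}$ into the blocks $L_i = \{u_{i-1}+1, \dots, u_i\}$ for $1 \le i \le m$, so that $L_i$ has exactly $x_i = u_i - u_{i-1}$ elements. The guiding observation is that the balanced-vector condition defining $K_m$ is nothing but the $\uR$-parking condition rephrased in terms of the number of entries landing in each level.

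First I would recast the defining condition. A sequence $(\pi_1, \dots, \pi_m)$ is a $\uR$-parking function iff its non-decreasing rearrangement satisfies $\lambda_i \le u_i$ for all $i$, and since $\lambda_i \le u_i$ says precisely that at least $i$ of the entries are $\le u_i$, this is equivalent to $\#\{j : \pi_j \le u_i\} \ge i$ for every $i \in [m]$ (the case $i=m$ forcing $\pi_j \le u_m$ for all $j$). Next I would introduce the level-count vector $\kR = (k_1, \dots, k_m)$ with $k_i = \#\{j : \pi_j \in L_i\}$. Because the $L_i$ tile $\{1, \dots, u_m\}$ in increasing order, the partial sums satisfy $k_1 + \cdots + k_i = \#\{j : \pi_j \le u_i\}$, so the reformulated parking condition becomes $k_1 + \cdots + k_i \ge i$ for all $i \in [m-1]$ together with $k_1 + \cdots + k_m = m$, which is exactly the membership $\kR \in K_m$.

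This yields a bijection between $\uR$-parking functions and pairs consisting of a vector $\kR \in K_m$ together with a choice, for each of the $m$ positions, of a level and a specific value inside that level. Counting these, the positions are distributed among the levels in $\binom{m}{k_1, \dots, k_m}$ ways, and within level $i$ each of the $k_i$ chosen positions independently takes one of $x_i$ values, contributing $\prod_i x_i^{k_i}$. Summing over $K_m$ gives $|\PF(\uR)| = \sum_{\kR \in K_m} \binom{m}{k_1, \dots, k_m} \prod_{i=1}^m x_i^{k_i}$, and factoring out $m!$ produces the two displayed expressions, identifying this count with $m! V_m(\xR)$.

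The remaining work is bookkeeping rather than substance, and is where I expect the only real care to be needed. I would verify that degenerate widths are handled consistently: if $x_i = 0$ for some $2 \le i \le m$ then $L_i$ is empty, forcing $k_i = 0$, which matches $x_i^{k_i}$ vanishing whenever $k_i \ge 1$; and if $x_1 = u_1 = 0$ then $\lambda_1 \ge 1 > 0 = u_1$ is impossible, so $|\PF(\uR)| = 0$, agreeing with the convention $V_m(\xR) = 0$. Finally, to justify naming $V_m$ the volume polynomial of $\Pi_m$, I would invoke the original identity of Pitman and Stanley \cite{PS1}; equivalently, one notes that both sides of $|\PF(\uR)| = m! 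V_m(\xR)$ are polynomials in $\xR$ agreeing at all nonnegative integer lattice points and hence identically, with the geometric content — that this polynomial equals $\mathrm{Vol}(\Pi_m(\xR))$ — being the part genuinely borrowed from \cite{PS1}.
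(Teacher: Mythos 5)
Your proposal is correct, but note that the paper itself never proves this theorem: it is stated as ``adapted from Pitman and Stanley \cite{PS1}'' and used as an imported result, so there is no internal proof to compare against, and your argument supplies genuinely new content. The argument is sound: partitioning the value range into levels $L_i=\{u_{i-1}+1,\dots,u_i\}$ of sizes $x_i$, rewriting the $\uR$-parking condition as $\#\{j:\pi_j\le u_i\}\ge i$ for all $i\in[m]$, and observing that the level-count vector $\kR$ then records exactly membership in $K_m$, yields the bijective count
\begin{equation*}
|\PF(\uR)|=\sum_{\kR\in K_m}\binom{m}{k_1,\dots,k_m}\prod_{i=1}^m x_i^{k_i},
\end{equation*}
which is $m!$ times the stated sum. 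Your handling of the degenerate cases is also right: an empty level forces $k_i=0$ and contributes a factor $0^0=1$, while $u_1=0$ kills both sides, matching the paper's convention that $V_m(\xR)=0$ when $x_1=0$. The only part your argument does not establish is that this explicit polynomial equals $\text{Vol}(\Pi_m(\xR))$; you correctly flag that this geometric identification is what must be borrowed from \cite{PS1}, and since the paper only ever uses the enumerative formula (see the example following the theorem, which evaluates the explicit sum), that division of labor is appropriate. Your level-count decomposition is also consonant with the counting style the paper uses for its own results, such as the composition-indexed sums in Theorem \ref{component} and Proposition \ref{un2}, though the decompositions there are by unattempted spots rather than by levels of the value range.
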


Though the index set and summation formula in Theorem \ref{Pitman-Stanley} resemble those of Theorem \ref{component}, we will show via an example that they are not parallel interpretations for parking functions, but rather complementary to each other.

\begin{example}
Take $m=4$, $n=5$, $\uR=(2, 5)$, $\vR=(3, 4)$, and $\xR=\Delta \uR=(2, 3)$. Then by Propositions \ref{forward} and \ref{backward}, $(v_1, v_2, \pi_1, \pi_2) \in \PF(4, 5)$ and $(\pi_1, \pi_2) \in \PF(\uR)$ both satisfy
\begin{align}
&(\pi_1, \pi_2) \in A:= \{(1, 1), (1, 2), (1, 3), (1, 4), (1, 5), (2, 1), \notag \\
&\hspace{2cm} (2, 2), (2, 3), (2, 4), (2, 5), (3, 1), (3, 2), (4, 1), (4, 2), (5, 1), (5, 2)\}.
\end{align}
From Theorem \ref{component},
\begin{equation}
|A|=2 \left( \binom{2}{1, 1, 0}3^0 2^0 1^{-1}+\binom{2}{1, 0, 1}3^0 1^{-1} 2^0+\binom{2}{2, 0, 0}4^1 1^{-1} 1^{-1}\right)=2(2+2+4)=16.
\end{equation}
From Theorem \ref{Pitman-Stanley},
\begin{equation}
|A|=\binom{2}{1, 1}2^1 3^1+\binom{2}{2, 0}2^2 3^0=12+4=16.
\end{equation}
We see that neither of the compositions of $|A|$ refines the other.
\end{example}

\section{Properties of random parking functions}
\label{random}
In this section we use the multi-shuffle construction introduced in Section \ref{pfs} to investigate various properties of a parking function chosen uniformly at random from $\PF(m, n)$. Sections \ref{mixed} through \ref{cov3} discuss the generic situation $m \lesssim n$, with Section \ref{mixed} focusing on mixed moments of multiple coordinates and Sections \ref{cov1} through \ref{cov3} focusing on covariances. Section \ref{special} discusses the special situation $m=n$. We will write our results in terms of coordinates $\pi_1, \dots, \pi_l$ of parking functions, where $1\leq l\leq m$ is any integer, and unattempted parking spots, which we denote by $k_i(\piR)$ for $i=1, \dots, n-m$. The parking coordinates satisfy permutation symmetry while the unattempted parking spots do not, so the statements in this section may be interpreted for any coordinates but are specific to the unattempted spots.

\subsection{Mixed moments of multiple coordinates}\label{mixed}
We begin with an asymptotic result for the mixed moments of two coordinates.
\begin{theorem}\label{mean}
Take $p,q \geq 1$ any integer. Take $m$ and $n$ large with $m=cn$ for some $0<c<1$. For parking function $\piR$ chosen uniformly at random from $\PF(m, n)$, we have
\begin{equation}\label{moment1}
\ER(\pi_1^p)=\frac{n^p}{p+1} \left(1+\frac{1}{n}\left(\frac{p+1}{2}-\frac{cp}{1-c}\right)+O\left(\frac{1}{n^2}\right)\right),
\end{equation}
and
\begin{equation}\label{moment2}
\ER(\pi_1^p \pi_2^q)=\frac{n^{p+q}}{(p+1)(q+1)} \left(1+\frac{1}{n}\left(\frac{p+q+2}{2}-\frac{c(p+q)}{1-c}\right)+O\left(\frac{1}{n^2}\right)\right).
\end{equation}
\end{theorem}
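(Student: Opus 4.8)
The plan is to extract the (joint) law of the coordinates directly from Theorem \ref{component} and then convert the resulting Abel-type sums into coefficient extractions of powers of the tree function $F(z)=\sum_{s\ge 0}(s+1)^{s-1}z^s/s!$. First I would set $a=n-m$ and use Theorem \ref{component} with $l=1$, together with permutation symmetry, to write $\PR(\pi_1=v)$. The crucial observation is that in Theorem \ref{component} the dependence on $v$ enters \emph{only} through the lower summation limit $s_1\ge m-n+v-1$, so after interchanging the order of summation one gets, with $S_p(N)=\sum_{u=1}^N u^p$,
\[
\ER(\pi_1^p)=\frac{1}{(n+1)^{m-1}}\sum_{s=0}^{m-1}\binom{m-1}{s}(n-m+s+1)^{s-1}(m-s)^{m-s-2}\,S_p(n-m+s+1).
\]
Applying Faulhaber's expansion $S_p(N)=\tfrac{N^{p+1}}{p+1}+\tfrac{N^p}{2}+\tfrac{pN^{p-1}}{12}+\cdots$ reduces the moment to a fixed linear combination of the sums $G_j:=\sum_{s}\binom{m-1}{s}(n-m+s+1)^{s-1+j}(m-s)^{m-s-2}$, $j=p-1,p,p+1$.

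Next I would evaluate the $G_j$ via generating functions. Since $\frac{F(z)^{a+1}}{a+1}=\sum_{s}\frac{(a+s+1)^{s-1}}{s!}z^s$ and the operator $z\frac{d}{dz}$ multiplies the $z^s$ coefficient by $s$, the factor $(a+s+1)^{s-1+j}$ is produced by $j$-fold application of $a+1+z\frac{d}{dz}$, giving the closed identity
\[
G_j=\sum_{s=0}^{m-1}\binom{m-1}{s}(n-m+s+1)^{s-1+j}(m-s)^{m-s-2}=(m-1)!\,[z^{m-1}]\left[\Bigl(a+1+z\tfrac{d}{dz}\Bigr)^{j}\frac{F(z)^{a+1}}{a+1}\cdot F(z)\right].
\]
Using $F=e^{zF}$, the derivative rule $F'=F^2/(1-zF)$, and the Abel/tree-function coefficient identities $[z^k]F^\beta=\frac{\beta(k+\beta)^{k-1}}{k!}$ and $[z^k]\frac{F^\beta}{1-zF}=\frac{(k+\beta)^k}{k!}$, these extractions collapse to rational combinations of $(1-zF)^{-r}$ times powers of $F$, each of which has an exact closed form by Abel's multinomial theorem. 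As sanity checks this yields $G_0=\frac{n-m+2}{n-m+1}(n+1)^{m-2}$ and the clean identity $G_1=(n+1)^{m-1}$, the latter being exactly the statement that $\sum_v\PR(\pi_1=v)=1$.

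With the $G_j$ in hand I would substitute $m=cn$, $a=(1-c)n$ and expand to order $1/n^2$. The leading term $\frac{n^p}{p+1}$ comes from $G_{p+1}/((p+1)(n+1)^{m-1})$; the $\tfrac{p+1}{2}$ correction is the combined effect of the $\tfrac{N^p}{2}$ Faulhaber term and the top-order shift in $G_{p+1}$, while the $-\tfrac{cp}{1-c}$ correction emerges from expanding the ratios $\tfrac{a+\mathrm{const}}{a+1}=1+\tfrac{1}{(1-c)n}+\cdots$ against the exponent $p$. The two-coordinate statement (\ref{moment2}) is handled identically but with the \emph{multi-dimensional} Cauchy product $\frac{F^{a+1}}{a+1}\cdot F\cdot F$ split into three factors according to the three blocks $(s_1,s_2,s_3)$ of Theorem \ref{component} with $l=2$: after the same interchange of summation the weight becomes a double power sum $\sum_{v_1<v_2,\,v_1\le A_1,\,v_2\le A_2}v_1^pv_2^q$ with $A_1=a+s_1+1$, $A_2=a+s_1+s_2+2$, and the exponent-raising operator is applied in the two variables conjugate to $A_1,A_2$; asymptotic independence gives $\frac{n^{p+q}}{(p+1)(q+1)}$ and the cross terms give the $1/n$ correction.

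The main obstacle is the second-order asymptotic bookkeeping. The leading order is immediate from $[z^k]F^\beta$, but pinning down the exact coefficient of $1/n$ forces me to compute the coefficients $[z^{m-1}]\frac{F^\beta}{(1-zF)^r}$ for $r\ge 2$ in closed form (via Abel's multinomial theorem) and to expand them carefully, since the dominant contributions partially cancel and genuine $O(1/n^2)$ control is needed. For the two-coordinate case there is the additional technical point of showing that the diagonal $v_1=v_2$ and the boundary effects of the triangular region $v_1<v_2\le A_2$ contribute only at order $O(1/n^2)$, so that they do not disturb the stated expansion.
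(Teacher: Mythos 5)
Your single-coordinate argument is sound and is essentially the paper's route in different clothing: the paper likewise starts from Theorem \ref{component} with $l=1$, interchanges the order of summation so that the $v$-dependence becomes the power sum $\sum_{v=1}^{n-m+s_1+1}v^p$, and then evaluates the remaining $s$-sum by tree-function asymptotics (termwise expansion recognized as a Cauchy product at $z=ce^{-c}$, using $F(ce^{-c})=e^c$, $F'(ce^{-c})=e^{2c}/(1-c)$, $F''(ce^{-c})=(3-2c)e^{3c}/(1-c)^3$), whereas you evaluate it by exact coefficient extraction via $G_j=(m-1)!\,[z^{m-1}]\bigl[\bigl(a+1+z\tfrac{d}{dz}\bigr)^j\tfrac{F^{a+1}}{a+1}\cdot F\bigr]$ and expand afterwards. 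Your sanity checks are correct ($G_0=\tfrac{n-m+2}{n-m+1}(n+1)^{m-2}$, $G_1=(n+1)^{m-1}$). One caveat: only $[z^k]F^\beta$ and $[z^k]F^\beta/(1-zF)$ collapse to single products; for $r\ge 2$ the coefficients of $F^\beta/(1-zF)^r$ are Ramanujan-$Q$-type sums with no simple product form (the special instances in Theorem \ref{Abel} cover only the first two cases), so those terms must be expanded asymptotically rather than ``collapsed'' — doable, but not the clean closed forms you assert.

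The genuine gap is in the two-coordinate case: the diagonal $v_1=v_2$ is \emph{not} an $O(1/n^2)$ effect and cannot be discarded. Since $\PR(\pi_1=\pi_2=j)=n^{-2}(1+O(1/n))$ for bulk $j$ (e.g.\ by Proposition \ref{decomposition} with $l=2$), one has $\ER\bigl(\pi_1^{p+q}\,\mathbf{1}_{\pi_1=\pi_2}\bigr)=\frac{n^{p+q-1}}{p+q+1}(1+o(1))$, which is $\frac{(p+1)(q+1)}{p+q+1}\cdot\frac{1}{n}$ times the leading term — exactly the order of the correction the theorem is pinning down. Concretely, for $p=q=1$ dropping the diagonal leaves $\ER(\pi_1\pi_2)$ short by $n/3$, which would force $\Cov(\pi_1,\pi_2)\approx -n/3$, contradicting Proposition \ref{two-coordinates} (the covariance is $O(1)$). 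The paper handles this by the exact identity $\#\{\piR:\pi_1=j,\pi_2=j\}=\#\{\piR:\pi_1=j,\pi_2=j+1\}$ built into equation (\ref{begin}), which converts the diagonal into a strictly increasing pair so that Theorem \ref{component}/Proposition \ref{un} applies, and then carries that term ((\ref{simple})--(\ref{middle})) through the full expansion alongside the symmetrized triangle term. You must do the same — either via that bijection, or by noting that the index set $S_2(\vR)$ of Theorem \ref{component} satisfies $S_2((j,j))=S_2((j,j+1))$ so the count extends to tied preferences — and then include this term in the $1/n$ bookkeeping; only then does (\ref{moment2}) come out with the stated coefficient $\frac{p+q+2}{2}-\frac{c(p+q)}{1-c}$.
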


The proof of Theorem \ref{mean} will utilize the following lemma.

\begin{lemma}\label{technical_lemma}
Take $l \geq 1$ any integer and $n$ large. For $1\leq i\leq l$, take $p_i \geq 1$ any integer and $a_i \sim n$ with $a_1<\cdots<a_l$. Then
\begin{align}\label{left}
\sum_{\substack{\#\{i: \hspace{.05cm} \pi_i \leq a_k\} \geq k \\ \forall k \in [l]}} \prod_{i=1}^l \pi_i^{p_i}
=\frac{a_l^{\sum_{i=1}^l p_i+l}}{\prod_{i=1}^l (p_i+1)} \left(1+\frac{1}{n}\left(\frac{\sum_{i=1}^l p_i+l}{2}\right)+O\left(\frac{1}{n^2}\right)\right).
\end{align}
\end{lemma}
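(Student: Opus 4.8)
The plan is to recognize the summation region and then reduce the estimate to a product of one-dimensional power sums. First I would rewrite the constraint $\#\{i:\pi_i\le a_k\}\ge k$ for all $k\in[l]$ in terms of the non-decreasing rearrangement $\lambda_1\le\cdots\le\lambda_l$ of $(\pi_1,\dots,\pi_l)$: it is equivalent to $\lambda_k\le a_k$ for every $k$. In particular the outermost constraint $\lambda_l\le a_l$ says exactly that $\pi_i\le a_l$ for all $i$, so the summation region $R$ sits inside the cube $[1,a_l]^l$, and the remaining constraints $\lambda_k\le a_k$ for $k\le l-1$ merely carve out a ``defect'' set $D=[1,a_l]^l\setminus R$. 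Accordingly I would split
\[
\sum_{\pi\in R}\prod_{i=1}^l \pi_i^{p_i}
=\underbrace{\sum_{\pi\in[1,a_l]^l}\prod_{i=1}^l \pi_i^{p_i}}_{=:S_{\mathrm{cube}}}
-\underbrace{\sum_{\pi\in D}\prod_{i=1}^l \pi_i^{p_i}}_{=:S_{\mathrm{def}}},
\]
and show that $S_{\mathrm{cube}}$ already produces the claimed main term and first-order correction while $S_{\mathrm{def}}$ is of relative order $O(1/n^2)$.

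The cube sum factors completely as $S_{\mathrm{cube}}=\prod_{i=1}^l\big(\sum_{j=1}^{a_l} j^{p_i}\big)$. By Faulhaber's formula (equivalently Euler--Maclaurin), $\sum_{j=1}^{a_l} j^{p_i}=\frac{a_l^{p_i+1}}{p_i+1}+\frac{a_l^{p_i}}{2}+O(a_l^{p_i-1})=\frac{a_l^{p_i+1}}{p_i+1}\big(1+\frac{p_i+1}{2a_l}+O(1/a_l^2)\big)$. Taking the product and factoring out the leading monomial (the cross terms being $O(1/a_l^2)$) gives
\[
S_{\mathrm{cube}}=\frac{a_l^{\sum_{i=1}^l p_i+l}}{\prod_{i=1}^l (p_i+1)}\left(1+\frac{1}{a_l}\cdot\frac{\sum_{i=1}^l (p_i+1)}{2}+O\!\left(\frac{1}{a_l^2}\right)\right).
\]
Since $\sum_i(p_i+1)=\sum_i p_i+l$, and since $a_l$ and $n$ differ by $O(1)$ (the operative meaning of $a_i\sim n$ here, giving $1/a_l=1/n+O(1/n^2)$), this is exactly the right-hand side of the claim up to the defect contribution.

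The remaining step, and the one requiring the most care, is to bound $S_{\mathrm{def}}$. A point lies in $D$ precisely when $\lambda_k>a_k$ for some $k\le l-1$, and $\lambda_k>a_k$ forces at least $l-k+1\ge 2$ of the coordinates to exceed $a_k$. Thus every point of $D$ has at least two coordinates lying in $(a_k,a_l]$ for some $k\le l-1$. Summing $\prod_i \pi_i^{p_i}$ over such configurations produces at least two factors of the form $\sum_{a_k<j\le a_l} j^{p}\asymp a_l^{p}(a_l-a_k)$, while the remaining coordinates contribute at most their full one-dimensional sums $\asymp a_l^{p_i+1}$. A union bound over the finitely many values of $k$ and the finitely many choices of exceeding coordinates then gives
\[
S_{\mathrm{def}}=O\!\left(\frac{a_l^{\sum_{i=1}^l p_i+l}}{a_l^2}\,(a_l-a_1)^2\right).
\]
This is where the hypothesis must be read as keeping the spread $a_l-a_1$ bounded (as it is in the applications, e.g.\ Theorem \ref{mean}): then $(a_l-a_1)^2/a_l^2=O(1/n^2)$, so $S_{\mathrm{def}}$ is negligible relative to $S_{\mathrm{cube}}$.

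Combining the two estimates yields the lemma. The main obstacle, as indicated, is the defect bound: one must verify the purely combinatorial fact that failure of any interior constraint entails at least two ``large'' coordinates, so that the defect sum always carries two small factors $(a_l-a_k)$ and hence is quadratically suppressed. An alternative organization is an induction on $l$ peeling off the largest coordinate, but the cube-minus-defect split above keeps the bookkeeping of the Euler--Maclaurin correction, which is the sole source of the $\frac{\sum p_i+l}{2n}$ term, entirely transparent.
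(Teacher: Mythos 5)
Your proof is correct and takes essentially the same route as the paper's: your cube sum $S_{\mathrm{cube}}$ is the paper's Stage 1, and the paper's Stages 2 through $l$ are exactly a partition of your defect set $D$ according to the first failing constraint, resting on the same observation that any failure forces at least two coordinates into a short interval near $a_l$. If anything, your write-up is more quantitative than the paper's, which only asserts the subtracted sums are ``of lower order'': you make the quadratic suppression $(a_l-a_k)^2/a_l^2$ explicit and correctly flag that the stated $O(1/n^2)$ accuracy requires reading $a_i\sim n$ as bounded spread $n-a_i=O(1)$, which is how the lemma is in effect applied (termwise in the summation indices $s,t$) in the proof of Theorem \ref{mean}.
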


\begin{proof}
Notice that the left side of (\ref{left}) may be alternatively computed in stages.

\vspace{.1cm}

Stage 1: We sum up $\prod_{i=1}^l \pi_i^{p_i}$, where the $\pi_i$'s all range from $1$ to $a_l$.

Stage 2: We subtract the sum of $\prod_{i=1}^l \pi_i^{p_i}$, where the $\pi_i$'s all range from $a_1+1$ to $a_l$ (so none of the $\pi_i$'s $\leq a_1$).

Stage 3: We subtract the sum of $\prod_{i=1}^l \pi_i^{p_i}$, where one of the $\pi_i$'s ranges from $1$ to $a_1$ while the others all range from $a_2+1$ to $a_l$ (so only one of the $\pi_i$'s $\leq a_2$).

$\vdots$

Stage $l$: We subtract the sum of $\prod_{i=1}^l \pi_i^{p_i}$, where one of the $\pi_i$'s ranges from $1$ to $a_1$, one ranges from $1$ to $a_2$, $\dots$, one ranges from $1$ to $a_{l-2}$, while the two remaining $\pi_i$'s both range from $a_{l-1}+1$ to $a_l$ (so only $l-2$ of the $\pi_i$'s $\leq a_{l-1}$).

\vspace{.1cm}

\noindent For illustration, we perform this alternative procedure when $l=3$.
\begin{align}
&\sum_{\pi_1=1}^{a_3} \sum_{\pi_2=1}^{a_3} \sum_{\pi_3=1}^{a_3} \pi_1^{p_1}\pi_2^{p_2}\pi_3^{p_3}-\sum_{\pi_1=a_1+1}^{a_3} \sum_{\pi_2=a_1+1}^{a_3} \sum_{\pi_3=a_1+1}^{a_3} \pi_1^{p_1}\pi_2^{p_2}\pi_3^{p_3}\notag \\
&-\left(\sum_{\pi_1=1}^{a_1} \pi_1^{p_1} \sum_{\pi_2=a_2+1}^{a_3} \sum_{\pi_3=a_2+1}^{a_3} \pi_2^{p_2} \pi_3^{p_3}+\sum_{\pi_2=1}^{a_1} \pi_2^{p_2} \sum_{\pi_1=a_2+1}^{a_3} \sum_{\pi_3=a_2+1}^{a_3} \pi_1^{p_1} \pi_3^{p_3}\right.\notag \\
&\hspace{5cm} \left.+\sum_{\pi_3=1}^{a_1} \pi_3^{p_3} \sum_{\pi_1=a_2+1}^{a_3} \sum_{\pi_2=a_2+1}^{a_3} \pi_1^{p_1} \pi_2^{p_2}\right).
\end{align}
Since $a_i \sim n$, the sums subtracted in Stages $2$ through $l$ are all of lower order than the sum in Stage $1$. The conclusion then follows from standard asymptotic analysis on the leading order term.
\end{proof}

\begin{proof}[Proof of Theorem \ref{mean}]
We convert the parking preferences of the first two cars to an equivalent increasing order:
\begin{multline}\label{begin}
\sum_{j=1}^n \sum_{k=1}^n j^p k^q \#\{\piR\in \PF(m, n): \pi_1=j, \pi_2=k\}=\sum_{j=1}^{n-1} j^{p+q} \#\{\piR\in \PF(m, n): \pi_1=j, \pi_2=j+1\}\\
+\sum_{j=1}^{n-1} \sum_{k=j+1}^n (j^p k^q+j^qk^p) \#\{\piR\in \PF(m, n): \pi_1=j, \pi_2=k\}.
\end{multline}

By Theorem \ref{component}, the second term of (\ref{begin}) is
\begin{align}\label{mid}
&(n-m+1) \sum_{j=1}^{n-1} \sum_{k=j+1}^n (j^p k^q+j^qk^p) \sum_{s_1=\max(0, m-n+j-1)}^{m-2} \sum_{s_2=\max(0, m-n+k-2-s_1)}^{m-2-s_1} \binom{m-2}{s_1, s_2, m-2-s_1-s_2} \cdot \notag \\
&\hspace{1.5cm} \cdot (s_1+1+n-m)^{s_1-1} (s_2+1)^{s_2-1} (m-2-s_1-s_2+1)^{m-2-s_1-s_2-1}\notag \\
&=(n-m+1) \sum_{s_1=0}^{m-2} \sum_{s_2=0}^{m-2-s_1} \binom{m-2}{s_1, s_2, m-2-s_1-s_2} (s_1+1+n-m)^{s_1-1} (s_2+1)^{s_2-1} \cdot \notag \\ &\hspace{1.5cm} \cdot (m-2-s_1-s_2+1)^{m-2-s_1-s_2-1} \sum_{j=1}^{n-m+1+s_1} \sum_{k=j+1}^{n-m+2+s_1+s_2} (j^p k^q+j^qk^p).
\end{align}
We make a change of variables: $s=s_2$ and $t=m-2-s_1-s_2$. Then (\ref{mid}) becomes
\begin{align}\label{middle2}
&(n-m+1) \sum_{s=0}^{m-2} \sum_{t=0}^{m-2-s} \binom{m-2}{s, t, m-2-s-t} (n-1-s-t)^{m-3-s-t} \cdot \notag \\
&\hspace{1.5cm} \cdot (s+1)^{s-1}(t+1)^{t-1} \sum_{j=1}^{n-1-s-t} \sum_{k=j+1}^{n-t} (j^p k^q+j^qk^p).
\end{align}
Similarly, by Proposition \ref{un}, the first term of (\ref{begin}) is
\begin{align}\label{simple}
&(n-m+1) \sum_{j=1}^{n-1} j^{p+q} \sum_{s_1=\max(0, m-n+j-1)}^{m-2} \sum_{s_2=0}^{m-2-s_1} \binom{m-2}{s_1, s_2, m-2-s_1-s_2} \cdot \notag \\
&\hspace{1.5cm} \cdot (s_1+1+n-m)^{s_1-1} (s_2+1)^{s_2-1} (m-2-s_1-s_2+1)^{m-2-s_1-s_2-1}\notag \\
&=(n-m+1) \sum_{s_1=0}^{m-2} \sum_{s_2=0}^{m-2-s_1} \binom{m-2}{s_1, s_2, m-2-s_1-s_2} (s_1+1+n-m)^{s_1-1} (s_2+1)^{s_2-1} \cdot \notag \\ &\hspace{1.5cm} \cdot (m-2-s_1-s_2+1)^{m-2-s_1-s_2-1} \sum_{j=1}^{n-m+1+s_1} j^{p+q}.
\end{align}
We make a change of variables: $s=s_2$ and $t=m-2-s_1-s_2$. Then (\ref{simple}) becomes
\begin{align}\label{middle}
&(n-m+1) \sum_{s=0}^{m-2} \sum_{t=0}^{m-2-s} \binom{m-2}{s, t, m-2-s-t} (n-1-s-t)^{m-3-s-t} \cdot \notag \\
&\hspace{1.5cm} \cdot (s+1)^{s-1}(t+1)^{t-1} \sum_{j=1}^{n-1-s-t} j^{p+q}.
\end{align}

Using Lemma \ref{technical_lemma}, for $p, q \geq 1$, (\ref{middle2})+(\ref{middle}) is asymptotically
\begin{align}
&\label{before}\frac{n-m+1}{(p+1)(q+1)} \sum_{s=0}^{m-2} \sum_{t=0}^{m-2-s} \frac{m^{s+t}}{s! t!} n^{m-s-t+p+q-1} e^{-c(s+t+1)} (s+1)^{s-1}(t+1)^{t-1} \cdot \notag \\
&\cdot \left(1-\frac{(s+t)(s+t+3)}{2cn}+\frac{(s+t+1)(s+t+3)}{n}-\frac{t(p+q+2)}{n}\right.\notag \\
& \hspace{6cm}\left.-\frac{c(s+t+1)^2}{2n}+\frac{p+q+2}{2n}+O(n^{-2})\right).
\end{align}
The tree function $F(z) = \sum_{s=0}^\infty \frac{z^s}{s!}(s+1)^{s-1}$ is related to the Lambert $W$ function via $F(z)=-W(-z)/z$, and
satisfies $F(ce^{-c}) = e^c$. By the chain rule its first and second derivatives therefore satisfy
\begin{align}
F'(ce^{-c}) = \frac{e^{2c}}{1-c}, \hspace{1cm} F''(ce^{-c}) = \frac{3-2c}{(1-c)^3}e^{3c}.
\end{align}
We recognize that (\ref{before}) is in the form of a Cauchy product, and converges to
\begin{align}
&\frac{n-m+1}{(p+1)(q+1)} n^{m+p+q-1}e^{-c} \sum_{s=0}^{\infty} \sum_{t=0}^{\infty}\frac{(ce^{-c})^{s+t}}{s! t!} (s+1)^{s-1} (t+1)^{t-1} \cdot \notag \\
& \hspace{2cm} \cdot \left(1+\frac{1}{n} (A+Bs+Ct+Ds^2+Et^2+Fst)+O(n^{-2})\right),
\end{align}
where
\begin{equation*}
A=-\frac{c}{2}+3+\frac{p+q+2}{2}, \hspace{.2cm} B=-c-\frac{3}{2c}+4, \hspace{.2cm} C=-c-\frac{3}{2c}-p-q+2,
\end{equation*}
\begin{equation}
D=-\frac{c}{2}-\frac{1}{2c}+1, \hspace{.2cm} E=-\frac{c}{2}-\frac{1}{2c}+1, \hspace{.2cm} F=-c-\frac{1}{c}+2.
\end{equation}
Using $F(z)$ this can be written as (with $z=ce^{-c}$):
\begin{align}
&\frac{n-m+1}{(p+1)(q+1)} n^{m+p+q-1} \cdot \notag \\
&\cdot \left[F(z)+\frac1n\left(AF(z)+(B+C)zF'(z)+(D+E)(z^2F''(z)+zF'(z))+Fz^2 F'(z) \frac{F'(z)}{F(z)}\right)+O(n^{-2})\right].
\end{align}
Dividing by $|\PF(m,n)|=(n-m+1)(n+1)^{m-1}$ and simplifying we get
\begin{equation}\label{contri2}
\frac{n^{p+q}}{(p+1)(q+1)} \left(1+\frac{1}{n}\left(\frac{p+q+2}{2}-\frac{c(p+q)}{1-c}\right)+O\left(\frac{1}{n^2}\right)\right)
\end{equation}
for the generic $(p, q)$-th mixed moment.

For the special case $p\geq 1$ and $q=0$, a similar asymptotic calculation gives the $p$-th moment as
\begin{equation}\label{contri2-special}
\frac{n^p}{p+1} \left(1+\frac{1}{n}\left(\frac{p+1}{2}-\frac{cp}{1-c}\right)+O\left(\frac{1}{n^2}\right)\right).
\end{equation}
\end{proof}

Extending the asymptotic expansion approach in the proof of Theorem \ref{mean}, we have the following more general result.

\begin{theorem}\label{general-mean}
Take $l\geq 1$ any integer. For $1\leq i\leq l$, take $p_i \geq 1$ any integer. Take $m$ and $n$ large with $m=cn$ for some $0<c<1$. For parking function $\piR$ chosen uniformly at random from $\PF(m, n)$, we have
\begin{equation}\label{general-moment}
\ER(\prod_{i=1}^l \pi_i^{p_i})=\frac{n^{\sum_{i=1}^l p_i}}{\prod_{i=1}^l (p_i+1)} \left(1+\frac{1}{n}\left(\frac{\sum_{i=1}^l p_i+l}{2}-\frac{c\sum_{i=1}^l p_i}{1-c}\right)+O\left(\frac{1}{n^2}\right)\right).
\end{equation}
\end{theorem}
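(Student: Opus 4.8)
The plan is to mirror the asymptotic expansion used for Theorem~\ref{mean}, now carrying $l$ coordinates and $l+1$ segments. Starting from $\ER(\prod_{i=1}^l \pi_i^{p_i}) = \frac{1}{|\PF(m,n)|}\sum_{\piR\in\PF(m,n)}\prod_{i=1}^l \pi_i^{p_i}$, I would first pass to the increasing rearrangement of the first $l$ preferences, as in~(\ref{begin}): for an increasing vector $\vR=(v_1,\dots,v_l)$, the configurations of $(\pi_1,\dots,\pi_l)$ realizing the value multiset $\{v_1,\dots,v_l\}$ contribute the symmetrized weight $\sum_{\sigma\in\Sym_l}\prod_{i=1}^l v_{\sigma(i)}^{p_i}$, and the number of parking functions with those preferences is given by Theorem~\ref{component} (the contiguous and coincident strata being the higher-$l$ analogues of the $\pi_2=j+1$ term handled via Proposition~\ref{un}). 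Substituting the formula of Theorem~\ref{component} and performing the same change of variables that turns the segment data $(m-n+u_1-1,\,u_2-u_1-1,\,\dots,\,n-u_l)$ into nonnegative indices $s_1,\dots,s_{l+1}$, the expression factors into an outer part --- a multinomial coefficient times $\prod_j (s_j+1)^{s_j-1}$ --- and an inner preference sum $\sum\prod_i \pi_i^{p_i}$ taken over the region cut out by thresholds $a_k\sim n$ built from the partial sums of the $s_j$.

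The second step is to evaluate the two distinct sources of $1/n$ corrections. The inner preference sum is exactly of the shape treated in Lemma~\ref{technical_lemma}; its summation region $\#\{i:\pi_i\le a_k\}\ge k$ automatically absorbs all orderings and coincidences of the coordinates, and it contributes $a_l^{\sum p_i+l}/\prod(p_i+1)$ together with the correction $\tfrac1{2n}(\sum_i p_i+l)$. The outer part I would recognize, as in the derivation leading to~(\ref{before}), as an $(l+1)$-fold Cauchy product: approximating the multinomial coefficient and the factor $(n-\cdots)^{\,m-\cdots}$ by $\frac{m^{s_1+\cdots+s_{l+1}}}{s_1!\cdots s_{l+1}!}\,n^{\cdots}\,e^{-c(\sum_j s_j+1)}$ with its own $1/n$ expansion turns the sum into a Cauchy product of the tree function $F(z)=\sum_s \frac{z^s}{s!}(s+1)^{s-1}$ and its derivatives at $z=ce^{-c}$. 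Summing the resulting $s_j$-linear and $s_j$-quadratic correction terms against the product converts each into a factor built from $F(z)$, $zF'(z)$, and $z^2F''(z)+zF'(z)$, to be evaluated using $F(ce^{-c})=e^c$, $F'(ce^{-c})=e^{2c}/(1-c)$, and $F''(ce^{-c})=(3-2c)e^{3c}/(1-c)^3$.

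Finally I would divide by $|\PF(m,n)|=(n-m+1)(n+1)^{m-1}$: the leading order collapses to $n^{\sum p_i}/\prod(p_i+1)$, while the $1/n$ correction should split into the inner contribution $\tfrac12(\sum_i p_i+l)$ from Lemma~\ref{technical_lemma} and an outer contribution $-c\sum_i p_i/(1-c)$ coming from the tree-function factors. The main obstacle is precisely this last bookkeeping. For general $l$ the number of cross-terms in the correction grows, and one must show that, after the Cauchy-product evaluation and the division by $|\PF(m,n)|$, every $s_j$-dependent piece cancels against the expansions of $e^{-c(\sum_j s_j+1)}$ and $(n-\cdots)^{m-\cdots}$, leaving only the two clean terms. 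The reason to expect this is structural: to leading order the coordinates decouple across independent segments, which already forces the product form $\prod_i n^{p_i}/(p_i+1)$ and an additive correction, so the real content is the order-$1/n$ cancellation. I would organize it by handling the dependence on the top threshold $a_l$ and the per-segment derivative factors uniformly in $l$ rather than term by term, so that the verification reduces to a single identity rather than a proliferation of cases.
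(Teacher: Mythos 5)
Your proposal follows essentially the same route as the paper's proof: reduce via Theorem~\ref{component} (with the change of variables to the segment sizes $s_1,\dots,s_{l+1}$), evaluate the inner preference sum by Lemma~\ref{technical_lemma}, recognize the outer sum as a Cauchy product of the tree function $F(z)$ and its derivatives at $z=ce^{-c}$, and divide by $|\PF(m,n)|$ so that the $1/n$ correction splits into the $\tfrac12(\sum_i p_i+l)$ term from the inner sum and the $-c\sum_i p_i/(1-c)$ term from the tree-function factors. The bookkeeping you flag as the main obstacle is exactly what the paper also compresses (it states the general coefficients $A,B,C,D$ without full derivation), so your plan is faithful to the published argument.
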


\begin{proof}
We will not include all technical details as in the $l=1, 2$ case, but point to some key facts. As in the Proof of Theorem \ref{mean}, using Theorem \ref{component} and Proposition \ref{un} and interchanging the order of summation, we have
\begin{align}\label{general-asy}
&\sum \left(\prod_{i=1}^l \pi_i^{p_i}\right) \{\piR\in \PF(m, n): \pi_i \text{ specified } \forall i \in [l]\} \notag \\
&=(n-m+1) \sum_{s_1=0}^{m-l} \cdots \sum_{s_l=0}^{m-l-s_1-\cdots-s_{l-1}} \binom{m-l}{s_1, \dots, s_l, m-l-s_1-\cdots-s_l} \cdot \notag \\
&\cdot (n-l+1-s_1-\cdots-s_l)^{m-l-1-s_1-\cdots-s_l} \prod_{i=1}^l (s_i+1)^{s_i-1} \left[\sum_{\substack{\#\{i: \hspace{.05cm} \pi_i \leq n-l+k-\sum_{j=k}^l s_j\} \geq k \\ \forall k \in [l]}} \prod_{i=1}^l \pi_i^{p_i}\right].
\end{align}
By Lemma \ref{technical_lemma}, for $p_i\geq 1$, (\ref{general-asy}) is asymptotically
\begin{align}
&\frac{n-m+1}{\prod_{i=1}^l (p_i+1)} \sum_{s_1=0}^{m-l} \cdots \sum_{s_l=0}^{m-l-s_1-\cdots-s_{l-1}} \frac{m^{\sum_{i=1}^l s_i}}{\prod_{i=1}^l s_i!} n^{m-1+\sum_{i=1}^l (p_i-s_i)} e^{-c\left(l-1+\sum_{i=1}^l s_i\right)} \prod_{i=1}^l (s_i+1)^{s_i-1} \cdot \notag \\
&\cdot \left(1-\frac{\left(\sum_{i=1}^l s_i\right)\left(2l-1+\sum_{i=1}^l s_i\right)}{2cn}+\frac{\left(l-1+\sum_{i=1}^l s_i\right)\left(l+1+\sum_{i=1}^l s_i\right)}{n}-\frac{s_l\left(\sum_{i=1}^l p_i+l\right)}{n}\right.\notag \\
& \hspace{1.5cm}\left.-\frac{c\left(l-1+\sum_{i=1}^l s_i\right)^2}{2n}+\frac{\sum_{i=1}^l p_i+l}{2n}+O\left(n^{-2}\right)\right). \label{generic}
\end{align}

Denote by $F(z) = \sum_{s=0}^\infty \frac{z^s}{s!}(s+1)^{s-1}$. An application of the tree function method shows that (\ref{generic}) converges to
\begin{align}
&\frac{n-m+1}{\prod_{i=1}^l (p_i+1)} n^{m-1+\sum_{i=1}^l p_i} \cdot \notag \\
&\cdot \Bigg[F(z)+\frac1n\left(AF(z)+\left(Bl-\sum_{i=1}^l p_i-l\right)zF'(z)+Cl(z^2F''(z)+zF'(z))+D\binom{l}{2}z^2 F'(z) \frac{F'(z)}{F(z)}\right)\notag \\
&\hspace{6cm}+O(n^{-2})\Bigg],
\end{align}
where
\begin{equation*}
A=-\frac{c(l-1)^2}{2}+(l^2-1)+\frac{\sum_{i=1}^l p_i+l}{2}, \hspace{.2cm} B=-c(l-1)-\frac{2l-1}{2c}+2l,
\end{equation*}
\begin{equation}
C=-\frac{c}{2}-\frac{1}{2c}+1, \hspace{.2cm} D=-c-\frac{1}{c}+2.
\end{equation}
Dividing by $|\PF(m,n)|=(n-m+1)(n+1)^{m-1}$ and simplifying we get
\begin{equation}
\frac{n^{\sum_{i=1}^l p_i}}{\prod_{i=1}^l (p_i+1)} \left(1+\frac{1}{n}\left(\frac{\sum_{i=1}^l p_i+l}{2}-\frac{c\sum_{i=1}^l p_i}{1-c}\right)+O\left(\frac{1}{n^2}\right)\right)
\end{equation}
for the generic mixed moment.
\end{proof}

Record the parking outcome of $\piR\in \PF(m, n)$ by $\tauR(\piR)=(\tau_1, \dots, \tau_m)$, where the $i$th car parks in spot $\tau_i$ with $1\leq \tau_i\leq n$. A similar asymptotic argument as in the proof of Theorems \ref{mean} and \ref{general-mean} leads to the following.

\begin{theorem}
Take $l\geq 1$ any integer. For $1\leq i\leq l$, take $p_i \geq 1$ any integer. Take $m$ and $n$ large with $m=cn$ for some $0<c<1$. For parking function $\piR$ chosen uniformly at random from $\PF(m, n)$, we have
\begin{equation}\label{general-moment}
\ER(\prod_{i=1}^l \tau_i^{p_i})=\frac{n^{\sum_{i=1}^l p_i}}{\prod_{i=1}^l (p_i+1)} \left(1+\frac{1}{n}\left(\frac{\sum_{i=1}^l p_i+l}{2}-\frac{c\sum_{i=1}^l p_i}{1-c}\right)+O\left(\frac{1}{n^2}\right)\right),
\end{equation}
where $\tauR$ is the parking outcome of $\piR$. In particular, for any finite $i$,
\begin{equation}
\ER(\tau_i^{p_i})=\frac{n^{p_i}}{p_i+1} \left(1+\frac{1}{n}\left(\frac{p_i+1}{2}-\frac{cp_i}{1-c}\right)+O\left(\frac{1}{n^2}\right)\right).
\end{equation}
\end{theorem}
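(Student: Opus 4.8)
The plan is to compare the parking outcome $\tauR$ coordinate-by-coordinate with the preference vector $\piR$ and to show that the two agree closely enough that the asymptotic expansion of Theorem \ref{general-mean} transfers verbatim. Write $\tau_i = \pi_i + \disp_i$, where $\disp_i \ge 0$ is the displacement of the $i$th car, i.e. the number of occupied spots it skips before parking. Since only $i-1$ cars have parked when car $i$ enters, the occupied spots it can skip number at most $i-1$, so $0 \le \disp_i \le i - 1 \le l - 1$; in particular, for fixed $l$ the displacements of the first $l$ cars are bounded by a constant.

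The key structural observation is that the first $l$ cars are all undisplaced precisely when their preferences are distinct. Indeed, if $\pi_1, \dots, \pi_l$ are pairwise distinct, then processing them in order each finds its preferred spot empty: the first car to be displaced would have to land on a spot occupied by an earlier car, but all earlier cars, being undisplaced, sit at their own distinct preferred spots, which would force an equality of preferences. Conversely a repeated value $\pi_i = \pi_j$ forces the later car to be displaced. Hence on the event $E = \{\pi_1, \dots, \pi_l \text{ all distinct}\}$ we have $\disp_i = 0$ for all $i \in [l]$, so $\prod_{i=1}^l \tau_i^{p_i} = \prod_{i=1}^l \pi_i^{p_i}$ there.

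Next I would bound the contribution of the complementary event. Expanding $\prod_{i=1}^l (\pi_i + \disp_i)^{p_i} - \prod_{i=1}^l \pi_i^{p_i}$ yields a bounded number of monomials each carrying at least one factor $\disp_j$; since $\disp_j \le l-1 = O(1)$ while each $\pi_i \le n$, every such monomial is $O(n^{\sum_i p_i - 1})$, and all of them vanish off $E$. Therefore
\begin{equation}
\left| \ER\Big(\prod_{i=1}^l \tau_i^{p_i}\Big) - \ER\Big(\prod_{i=1}^l \pi_i^{p_i}\Big) \right| \le C\, n^{\sum_{i=1}^l p_i - 1}\, \PR(E^c),
\end{equation}
where $\PR(E^c) = \PR(\pi_i = \pi_j \text{ for some } i \ne j)$. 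The remaining point is to show $\PR(E^c) = O(1/n)$. This follows from the two-coordinate enumeration of Theorem \ref{component}: the number of parking functions with $\pi_i = \pi_j = a$, divided by $|\PF(m,n)|$, is $O(1/n^2)$ uniformly in $a$ (it is largest for small $a$ and only decreases as $a$ grows), so summing the $n$ diagonal probabilities over the $\binom{l}{2}$ pairs gives $\PR(E^c) = O(1/n)$. Consequently the displacement correction is $O(n^{\sum_i p_i - 2})$, of relative order $O(1/n^2)$, and is absorbed into the error term of Theorem \ref{general-mean}. Plugging in that theorem, and using exchangeability of the preference coordinates for the ``any finite $i$'' statement (where the displacement of car $i$ is likewise bounded by $i-1$ and occurs with probability $O(1/n)$), yields the claimed formula.

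The main obstacle is the collision bound $\PR(E^c) = O(1/n)$: this is the one place where genuine quantitative control of the joint law of two preference coordinates is needed rather than a soft symmetry argument, and some care is required because the diagonal count $\#\{\pi_i = \pi_j = a\}$ is only covered by Theorem \ref{component} once one checks that its derivation remains valid for the weakly-increasing specification $v_i = v_j = a$. An alternative, matching the wording ``a similar asymptotic argument,'' is to bypass the comparison and rerun the multi-shuffle expansion of the proofs of Theorems \ref{mean} and \ref{general-mean} with the occupied spots in place of the preferences; this reproduces the identical leading term and $1/n$ correction, but at the cost of repeating the full Cauchy-product and tree-function computation.
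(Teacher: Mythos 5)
Your proof is correct, but it takes a genuinely different route from the paper's. The paper disposes of this theorem in a single sentence --- ``a similar asymptotic argument as in the proof of Theorems \ref{mean} and \ref{general-mean}'' --- meaning it re-runs the multi-shuffle decomposition, Cauchy product, and tree-function expansion with outcome coordinates in place of preferences. You instead reduce the statement to the already-proved Theorem \ref{general-mean} by a comparison argument: writing $\tau_i=\pi_i+\disp_i$, noting $\disp_i\le i-1\le l-1=O(1)$ for the first $l$ cars, proving that pairwise-distinct preferences among the first $l$ cars force zero displacement, and showing the collision event has probability $O(1/n)$, so the correction to every mixed moment is $O(n^{\sum_i p_i-2})$ and is absorbed by the stated relative error $O(1/n^2)$. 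This is more economical (no repetition of the heavy asymptotic computation) and, frankly, more rigorous than the paper's one-line assertion; what it gives up is any information beyond the $1/n$ term, which is precisely where outcome moments and preference moments could start to differ and which the paper's method would in principle capture. Two small points to tighten: the phrase ``all of them vanish off $E$'' should read ``vanish on $E$'' (the monomials carrying a $\disp_j$ factor are supported on $E^c$); and both gaps you flag do close --- the proof of Theorem \ref{component} only uses the condition $u_i\ge\max(v_i,\,n-m+i)$, which is insensitive to ties $v_i=v_{i+1}$, so the diagonal specification $v_i=v_j=a$ is covered, and the uniform $O(1/n^2)$ bound for small $a$ follows from the monotonicity note in that theorem together with the observation that the $\Theta(n^2)$ specifications $(v_1,v_2)$ with $v_i\le n-m+i$ all have the same count and define disjoint events, so each carries probability $O(1/n^2)$.
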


We will now deduce all possible covariances of parking functions, between two coordinates, between a coordinate and an unattempted spot, and between two unattempted spots. As for the mixed moment calculations in Section \ref{mixed}, combinatorial consideration and asymptotic expansion will be the central ingredients in our derivations.

\subsection{Covariance between two coordinates}\label{cov1}
\begin{proposition}\label{two-coordinates}
Take $m$ and $n$ large with $m=cn$ for some $0<c<1$. For parking function $\piR$ chosen uniformly at random from $\PF(m, n)$, we have
\begin{equation}
\Var(\pi_1) \sim \frac{1}{12}n^2-\frac{c}{6(1-c)}n, \hspace{1cm} \Cov(\pi_1, \pi_2) \sim -\frac{1}{4(1-c)^2}.
\end{equation}
\end{proposition}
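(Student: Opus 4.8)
The plan is to read both quantities off the moment asymptotics of Theorem~\ref{mean}, writing $\Var(\pi_1)=\ER(\pi_1^2)-\ER(\pi_1)^2$ and $\Cov(\pi_1,\pi_2)=\ER(\pi_1\pi_2)-\ER(\pi_1)\ER(\pi_2)$, and using permutation symmetry $\ER(\pi_2)=\ER(\pi_1)$. The variance falls out immediately. Setting $p=2$ in \eqref{moment1} gives $\ER(\pi_1^2)=\frac{n^2}{3}+\frac{n}{3}(\frac{3}{2}-\frac{2c}{1-c})+O(1)$, while setting $p=1$ and squaring gives $\ER(\pi_1)^2=\frac{n^2}{4}+\frac{n}{2}\cdot\frac{1-2c}{1-c}+O(1)$; here the $O(1/n)$ tail of $\ER(\pi_1)$ only affects the $O(1)$ term, which lies below the precision needed. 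Subtracting, the $n^2$ terms give $\frac{1}{3}-\frac{1}{4}=\frac{1}{12}$, and the $n$ terms, combined over the common denominator $6(1-c)$, collapse (the numerator being $3n(1-c)-4cn-3n(1-2c)=-cn$) to $-\frac{c}{6(1-c)}$, yielding the stated $\Var(\pi_1)$.

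The covariance is more delicate. With $p=q=1$ in \eqref{moment2} we get $\ER(\pi_1\pi_2)=\frac{n^2}{4}+\frac{n}{2}\cdot\frac{1-2c}{1-c}+O(1)$, and exactly the same $\frac{n^2}{4}+\frac{n}{2}\cdot\frac{1-2c}{1-c}$ appears in $\ER(\pi_1)^2$. Thus the $n^2$ and $n^1$ contributions cancel and $\Cov(\pi_1,\pi_2)=O(1)$; the claim is that this $O(1)$ limit equals $-\frac{1}{4(1-c)^2}$. Extracting it requires the constant ($n^0$) term $C_{12}$ of $\ER(\pi_1\pi_2)$ together with the $1/n$-coefficient $e_1$ of $\ER(\pi_1)$, the latter feeding the constant term $a_0^2+e_1$ of $\ER(\pi_1)^2$ through the cross term (with $a_0=\frac{1}{2}\cdot\frac{1-2c}{1-c}$). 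Both of these sit one order beyond what \eqref{moment1}--\eqref{moment2} record, so the proof must extend the asymptotic expansion of Theorem~\ref{mean} by a further order before the subtraction $\Cov(\pi_1,\pi_2)=C_{12}-e_1-\frac{1}{4}\left(\frac{1-2c}{1-c}\right)^2+o(1)$ can be evaluated.

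Concretely, I would return to the Cauchy-product expansion \eqref{before} in the proof of Theorem~\ref{mean} and carry the bracketed factor to order $n^{-2}$ rather than $n^{-1}$, retaining the quadratic-in-$(s,t)$ and the newly relevant cubic corrections previously absorbed into $O(n^{-2})$. Summing against $(s+1)^{s-1}(t+1)^{t-1}(ce^{-c})^{s+t}/(s!\,t!)$ then produces, via $F(ce^{-c})=e^c$, $F'(ce^{-c})=\frac{e^{2c}}{1-c}$, $F''(ce^{-c})=\frac{3-2c}{(1-c)^3}e^{3c}$ and an analogous expression for $F'''(ce^{-c})$, the next coefficient in closed form; dividing by $|\PF(m,n)|$ gives $C_{12}$ and $e_1$. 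A consistency check that the target is reachable: one needs $C_{12}-e_1=\frac{1}{4}\left(\frac{1-2c}{1-c}\right)^2-\frac{1}{4(1-c)^2}$, and since $\frac{1}{4}\left((1-2c)^2-1\right)/(1-c)^2=\frac{c^2-c}{(1-c)^2}=-\frac{c}{1-c}$, this is precisely $C_{12}-e_1=-\frac{c}{1-c}$, after which the covariance collapses to $-\frac{1}{4(1-c)^2}$.

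The main obstacle is exactly this one-order extension: the $n^{-1}$ computation already needed the six coefficients $A,\dots,F$ and careful Cauchy-product bookkeeping, and pushing to $n^{-2}$ multiplies both the number of moment-weighted sums (now involving up to $s^2t^2$-type terms and $F'''$) and the opportunities for arithmetic error. The mitigating feature is that only the single surviving scalar constant is required, so one may track that scalar through the expansion at $p=q=1$ rather than a full polynomial in $(p,q)$; even so, verifying that the genuinely $O(1/n)$ pieces cancel between $\ER(\pi_1\pi_2)$ and $\ER(\pi_1)^2$, and that the retained $O(1)$ pieces assemble into $-\frac{1}{4(1-c)^2}$, is where the real work lies.
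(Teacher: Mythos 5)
Your variance computation is correct and complete: with $p=2$ and $p=1$ in \eqref{moment1}, the precision of Theorem \ref{mean} (absolute error $O(1)$) suffices for an answer carried to order $n$, and your arithmetic agrees with the paper's. For the covariance you have also correctly diagnosed the difficulty --- the $n^2$ and $n$ terms cancel exactly, so the $O(1)$ absolute error in \eqref{moment2} is fatal, and the expansion of Theorem \ref{mean} must be pushed one order further. This is precisely the route the paper takes.

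However, the covariance half of your proposal is a plan rather than a proof, and the missing step is exactly the one that constitutes the bulk of the paper's argument. You never compute the two constants you need: the $n^0$ term $C_{12}$ of $\ER(\pi_1\pi_2)$ and the $n^{-1}$ coefficient $e_1$ of $\ER(\pi_1)$. Your ``consistency check'' $C_{12}-e_1=-\frac{c}{1-c}$ is obtained by assuming the claimed covariance and solving for $C_{12}-e_1$; it confirms the target is not self-contradictory, but it provides no independent evaluation of either constant, so as written the argument is circular. The paper does the evaluation: carrying the bracketed factor of \eqref{before} to order $n^{-2}$ produces fifteen coefficients $B_1,\dots,B_{15}$, including quartic terms $s^4$, $t^4$, $s^3t$, $st^3$, $s^2t^2$ (not merely the cubic corrections you anticipate, so tree-function derivatives beyond $F'''$ enter), and a more involved application of the tree function method then yields \eqref{last2} and \eqref{last1}, namely $\ER(\pi_1\pi_2)\sim\frac{n^2}{4}+\frac{(1-2c)n}{2(1-c)}+\frac{1-c+3c^2-2c^3}{2(1-c)^3}$ and $\ER(\pi_1)\sim\frac{n}{2}+\frac{1-2c}{2(1-c)}+\frac{1+c-c^2}{2(1-c)^3 n}$, i.e.\ $C_{12}=\frac{1-c+3c^2-2c^3}{2(1-c)^3}$ and $e_1=\frac{1+c-c^2}{2(1-c)^3}$, whose difference is indeed $-\frac{c}{1-c}$. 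Until you carry out this evaluation (or an equivalent one), the stated value of $\Cov(\pi_1,\pi_2)$ remains unestablished.
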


\begin{proof}
For $p=q=1$, performing asymptotic expansion as in the proof of Theorem \ref{mean} but keeping more lower order terms, we have
\begin{align*}
\sum_{j=1}^n \sum_{k=1}^n jk \#\{\piR\in \PF(m, n): \pi_1=j, \pi_2=k\}
\end{align*}
converges to
\begin{align*}
&\frac{n-m+1}{4} n^{m+1}e^{-c} \sum_{s=0}^{\infty} \sum_{t=0}^{\infty} \frac{(ce^{-c})^{s+t}}{s! t!} (s+1)^{s-1} (t+1)^{t-1} \cdot \notag \\
&\cdot \left(1+\left(A_1+A_2s+A_3t+A_4s^2+A_5t^2+A_6st\right)\frac{1}{n}+\left(B_1+B_2s+B_3t+B_4s^2+B_5t^2+B_6st+ \right.\right. \notag \\
&\left.+B_7s^2t+B_8st^2+B_9s^3+B_{10}t^3+B_{11}s^3t+B_{12}st^3+B_{13}s^2t^2+B_{14}s^4+B_{15}t^4)\frac{1}{n^2} +O(n^{-3}) \right),
\end{align*}
where
\begin{equation*}
A_1=-\frac{c}{2}+5, \hspace{.5cm} A_2+A_3=-2c-\frac{3}{c}+4,
\end{equation*}
\begin{equation*}
A_4+A_5=-c-\frac{1}{c}+2, \hspace{.5cm} A_6=-c-\frac{1}{c}+2,
\end{equation*}
\begin{equation*}
B_1=\frac{c^2}{8}-\frac{17c}{6}+9, \hspace{.5cm} B_2+B_3=c^2-\frac{13}{6c^2}-14c-\frac{15}{c}+\frac{45}{2},
\end{equation*}
\begin{equation}
B_4+B_5=\frac{3c^2}{2}+\frac{3}{4c^2}-12c-\frac{11}{c}+\frac{41}{2}, \hspace{.5cm} B_6=\frac{3c^2}{2}+\frac{3}{4c^2}-12c-\frac{11}{c}+\frac{37}{2},
\end{equation}
\begin{equation*}
B_7+B_8=3c^2+\frac{7}{2c^2}-14c-\frac{15}{c}+\frac{45}{2}, \hspace{.5cm} B_9+B_{10}=c^2+\frac{7}{6c^2}-\frac{14c}{3}-\frac{5}{c}+\frac{15}{2},
\end{equation*}
\begin{equation*}
B_{11}+B_{12}=c^2+\frac{1}{c^2}-4c-\frac{4}{c}+6, \hspace{.5cm} B_{13}=\frac{3c^2}{4}+\frac{3}{4c^2}-3c-\frac{3}{c}+\frac{9}{2},
\end{equation*}
\begin{equation*}
B_{14}+B_{15}=\frac{c^2}{4}+\frac{1}{4c^2}-c-\frac{1}{c}+\frac{3}{2}.
\end{equation*}
A more involved application of the tree function method then yields
\begin{equation}\label{last2}
\ER(\pi_1\pi_2) \sim \frac{n^2}{4}+\frac{(1-2c)n}{2(1-c)}+\frac{1-c+3c^2-2c^3}{2(1-c)^3}.
\end{equation}
The same approach also yields
\begin{equation}\label{last1}
\ER(\pi_1) \sim \frac{n}{2}+\frac{1-2c}{2(1-c)}+\frac{1+c-c^2}{2(1-c)^3n}.
\end{equation}
The claimed asymptotics are then immediate.
\end{proof}

\subsection{Covariance between a coordinate and an unattempted spot}\label{cov2}
Recall that for a parking function $\piR\in \PF(m, n)$, there are $n-m$ parking spots that are never attempted by any car. Let $k_i(\piR)$ for $i=1, \dots, n-m$ represent these spots, so that $0:=k_0<k_1<\cdots<k_{n-m}<k_{n-m+1}:=n+1$. Let
\begin{equation}
\PF(m, n; i, k)=\{\piR\in \PF(m, n): k_i(\piR)=k\},
\end{equation}
consisting of parking functions where the $i$th empty spot is fixed at $k$. The unattempted spot $k$ ranges from $i$ to $m+i$ and breaks up the parking function $\piR$ into two components $\alphaR$ and $\betaR$, with $\alphaR \in \PF(k-i, k-1)$ and $\betaR \in \PF(m-k+i, n-k)$, and $\piR$ a shuffle of the two. From the parking scheme, if $j<k$ and $\piR=(j, \pi_2, \dots, \pi_m) \in \PF(m, n; i, k)$, then $\piR'=(l, \pi_2, \dots, \pi_m) \in \PF(m, n; i, k)$ for all $1\leq l\leq j$, while if $j>k$ and $\piR=(j, \pi_2, \dots, \pi_m) \in \PF(m, n; i, k)$, then $\piR'=(l, \pi_2, \dots, \pi_m) \in \PF(m, n; i, k)$ for all $k+1\leq l\leq j$. This implies that given the last $m-1$ parking preferences, it is sufficient to identify the largest feasible first preference (if exists).

\begin{theorem}\label{main2}
$\piR=(j, \pi_2, \dots, \pi_m)$ is in $\PF(m, n; i, k)$ but $\piR'=(j+1, \pi_2, \dots, \pi_m)$ is not if and only if (1) $i\leq j\leq k-1$ and $(\pi_2, \dots, \pi_m)$ is a multi-shuffle of $\alphaR \in \PF(j-i, j-1)$, $\betaR \in \PF(k-j-1, k-j-1)$, and $\gammaR \in \PF(m-k+i, n-k)$; or (2) $j \geq n-m-i+k+1$ and $(\pi_2, \dots, \pi_m)$ is a multi-shuffle of $\alphaR \in \PF(k-i, k-1)$, $\betaR \in \PF(j-k-1-n+m+i, j-k-1)$, and $\gammaR \in \PF(n-j, n-j)$.
\end{theorem}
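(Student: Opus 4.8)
The plan is to reduce each of the two cases to the single-coordinate instance $l=1$ of Theorem \ref{main1}, applied to the parking function that lives on one side of the fixed empty spot $k$. The input is the decomposition recalled just before the statement: for $\piR \in \PF(m,n;i,k)$ the spot $k$ is never probed, so the $k-i$ cars preferring $\le k-1$ all park in $[1,k-1]$ and the $m-k+i$ cars preferring $\ge k+1$ all park in $[k+1,n]$, and membership in $\PF(m,n;i,k)$ is equivalent to $\piR$ being a shuffle of a parking function on $[1,k-1]$ (in $\PF(k-i,k-1)$) and one on $[k+1,n]$ (in $\PF(m-k+i,n-k)$, after translation). Since car $1$ parks first, at its own preference, we have $\pi_1=j\neq k$, so $\piR$ falls into exactly one of Case $1$ ($j\le k-1$, car $1$ in the left block) or Case $2$ ($j\ge k+1$, car $1$ in the right block).

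The key bookkeeping observation is that bumping $\pi_1$ to $j+1$ within its own block cannot move the empty spot $k$ off the $i$-th position except by an outright occupation of $k$ (Case $1$) or a global parking failure (Case $2$). Indeed, in either configuration the number of cars preferring $\le k$ equals $k-i<k$, so these cars can never fill $[1,k]$ and hence never overflow past $k$; consequently spot $k$ is the $i$-th empty spot precisely when it is empty, because its emptiness leaves exactly $i-1$ empty spots below it. In Case $1$ this reduces the question to whether the left-block cars still fit inside $[1,k-1]$ after the bump, while in Case $2$ the left block and the emptiness of $k$ are untouched by $\pi_1$, so losing membership can only mean that $\piR'=(j+1,\pi_2,\dots,\pi_m)$ fails to be a parking function at all, i.e.\ the right-block cars overflow past spot $n$.

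I would then invoke Theorem \ref{main1} with $l=1$ in each case. For Case $1$, view the left block as a parking function with $m_L=k-i$ cars on $n_L=k-1$ spots and distinguished first coordinate $u_1=j$; by the previous paragraph, the conjunction ``$\piR\in\PF(m,n;i,k)$ and $\piR'\notin\PF(m,n;i,k)$'' says exactly that $u_1=j$ is the largest feasible value of that coordinate, which is the hypothesis of Theorem \ref{main1}. That theorem forces $j\ge n_L-m_L+1=i$ and writes the remaining left cars as a multi-shuffle of a $\PF(m_L-n_L+u_1-1,u_1-1)=\PF(j-i,j-1)$ and a $\PF(n_L-u_1,n_L-u_1)=\PF(k-j-1,k-j-1)$; shuffling with the untouched right block $\PF(m-k+i,n-k)$ gives the stated description. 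For Case $2$, apply the same theorem to the right block, a parking function with $m-k+i$ cars on $n-k$ spots and $u_1=j-k$: maximality gives $j-k\ge (n-k)-(m-k+i)+1=n-m-i+1$, i.e.\ $j\ge n-m-i+k+1$, and splits the remaining right cars into a $\PF(j-k-1-n+m+i,j-k-1)$ and a $\PF(n-j,n-j)$; these shuffle with the full left block $\PF(k-i,k-1)$ to produce the claim. Both directions of the ``if and only if'' come for free, since Theorem \ref{main1} is itself an equivalence.

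The main obstacle is the equivalence in the second paragraph, namely that breaking membership in $\PF(m,n;i,k)$ is the same as breaking the parking-function property of a single block. The subtle point to rule out is that, in Case $1$, shifting $\pi_1$ might keep $k$ empty yet rearrange the empty spots inside $[1,k-1]$; this is harmless because the left block always carries exactly $i-1$ empties regardless of their positions, so $k$ stays the $i$-th empty as long as it is empty. Dually, in Case $2$ one must check that $k$ can never be dislodged from the $i$-th empty position except through a genuine parking failure, which again follows from the non-interaction across $k$. Once this bookkeeping is secured, each case is an immediate specialization of Theorem \ref{main1}, the degenerate sub-cases (an empty $\PF(0,0)$ factor when $j+1=k$ in Case $1$, and the analogous boundary in Case $2$) are absorbed automatically, and a quick check that the car counts sum to $m$ in both decompositions confirms consistency.
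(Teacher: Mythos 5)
Your overall route is exactly the paper's: split into $j<k$ and $j>k$, use the decomposition of $\PF(m,n;i,k)$ across the unattempted spot $k$ into a left block in $\PF(k-i,k-1)$ and a right block in $\PF(m-k+i,n-k)$, observe that maximality of $\pi_1=j$ within $\PF(m,n;i,k)$ is the same as maximality of the corresponding coordinate within its own block, and then invoke Theorem \ref{main1} with $l=1$; your parameter translations ($j\geq i$ in Case 1, $j-k\geq n-m-i+1$ in Case 2) and the resulting factors match the paper's proof line by line. The paper is terser---it simply asserts that the statement ``is equivalent to identifying $j$ so that $A_{\deltaR}=[j]$'' inside the block---whereas you attempt to justify that equivalence, which is the right instinct.

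However, your stated justification of the key bookkeeping step is a non-sequitur. You argue that since the cars preferring $\leq k$ number $k-i<k$, ``these cars can never fill $[1,k]$ and hence never overflow past $k$.'' That implication is false as a general principle: a car overflows past $k$ as soon as the interval from its own preference up to $k$ is full when it arrives, which does not require all of $[1,k]$ to be full (two cars both preferring spot $k$ do not fill $[1,k]$, yet the second parks at $k+1$). In your setting the conclusion happens to be true, but for a different reason: the left block of $\piR$ is a parking function on $k-1$ spots, and raising one coordinate by $1$ keeps its non-decreasing rearrangement within the pigeonhole bound (\ref{pigeon}) for a parking function on $k$ spots, so the bumped left block still parks inside $[1,k]$. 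Better yet, the claim can be excised entirely: the dichotomy you need only requires that the \emph{first} left-block car to overflow out of $[1,k-1]$ lands precisely on spot $k$ (spot $k$ is empty, and right-block cars never park there), so that $k$ becomes occupied and membership in $\PF(m,n;i,k)$ is destroyed; whether later cars go past $k$ is irrelevant, since the theorem only requires $\piR'\notin\PF(m,n;i,k)$, not $\piR'\notin\PF(m,n)$. With that one inference repaired or removed, your proof is correct and is essentially the paper's argument.
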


\begin{proof}
The proof builds upon Theorem \ref{main1}.

First suppose $j<k$. Then $(\pi_2, \dots, \pi_m)=(\delta_1, \dots, \delta_{k-i-1}, \gamma_1, \dots, \gamma_{m-k+i}):=(\deltaR, \gammaR)$, where $\deltaR$ consists of cars with preference $\leq k-1$ and $\gammaR$ consists of cars with preference $\geq k+1$. It is clear that $(\pi_1, \deltaR) \in \PF(k-i, k-1)$ and $\gammaR \in \PF(m-k+i, n-k)$. The statement of the theorem is equivalent to identifying $j$ so that $A_{\deltaR}=[j]$. From Theorem \ref{main1}, $j \geq i$ and $\deltaR$ is a shuffle of $\alphaR \in \PF(j-i, j-1)$ and $\betaR \in \PF(k-j-1, k-j-1)$.

Next suppose $j>k$. Then $(\pi_2, \dots, \pi_m)=(\alpha_1, \dots, \alpha_{k-i}, \delta_1, \dots, \delta_{m-k+i-1}):=(\alphaR, \deltaR)$, where $\alphaR$ consists of cars with preference $\leq k-1$ and $\deltaR$ consists of cars with preference $\geq k+1$. It is clear that $\alphaR \in \PF(k-i, k-1)$ and $(\pi_1, \deltaR) \in \PF(m-k+i, n-k)$. The statement of the theorem is equivalent to identifying $j$ so that $A_{\deltaR}=[j-k]$. From Theorem \ref{main1}, $j-k \geq n-m-i+1$ and $\deltaR$ is a shuffle of $\betaR \in \PF(j-k-1-n+m+i, j-k-1)$ and $\gammaR \in \PF(n-j, n-j)$.
\end{proof}

\begin{proposition}\label{jk_cov}
Take $1\leq i \leq n-m$ any integer. Take $i \leq k \leq m+i$ any integer.
For $j<k$, the number of parking functions $\piR\in \PF(m, n)$ with $\pi_1=j$ and $k_i=k$ is
\begin{equation}
\binom{m-1}{m-k+i} i(n-m-i+1) (n-k+1)^{m-k+i-1} \sum_{s=0}^{\min(k-i-1, k-j-1)} \binom{k-i-1}{s} (k-1-s)^{k-i-s-2} (s+1)^{s-1}.
\end{equation}
Note that this quantity stays constant for $j \leq i$ and decreases as $j$ increases past $i$ as there are fewer resulting summands.
For $j>k$, the number of parking functions $\piR\in \PF(m, n)$ with $\pi_1=j$ and $k_i=k$ is
\begin{equation}
\binom{m-1}{k-i} i k^{k-i-1} (n-m-i+1) \sum_{s=0}^{\min(m+i-k-1, n-j)} \binom{m-k+i-1}{s} (n-k-s)^{m+i-k-s-2} (s+1)^{s-1}.
\end{equation}
Note that this quantity stays constant for $j \leq n-m-i+k+1$ and decreases as $j$ increases past $n-m-i+k+1$ as there are fewer resulting summands.
\end{proposition}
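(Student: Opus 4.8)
The plan is to exploit the component decomposition recorded just before Theorem \ref{main2}: fixing the $i$th unattempted spot at $k_i = k$ splits $\piR$ into a left block $\alphaR \in \PF(k-i, k-1)$ (the cars with preference $\leq k-1$, which fill $k-1$ spots while leaving the earlier $i-1$ empty spots $k_1, \dots, k_{i-1}$) and a right block $\gammaR \in \PF(m-k+i, n-k)$ (the cars with preference $\geq k+1$, translated down by $k$), with $\piR$ an arbitrary shuffle of the two. I would then condition on which block contains the first car: since $\pi_1 = j$, the first car lies in the left block exactly when $j < k$ and in the right block exactly when $j > k$.

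First I would treat the case $j < k$. Here $\pi_1 = j$ is the first entry of $\alphaR$, so $\alphaR$ ranges over parking functions in $\PF(k-i, k-1)$ with first coordinate equal to $j$, while $\gammaR$ ranges freely over $\PF(m-k+i, n-k)$, and the remaining cars $\pi_2, \dots, \pi_m$ interleave the length-$(k-i-1)$ tail of $\alphaR$ with $\gammaR$. The count therefore factors as a product of three quantities: the interleaving count $\binom{m-1}{m-k+i}$; the total number of right blocks $|\PF(m-k+i, n-k)| = (n-m-i+1)(n-k+1)^{m-k+i-1}$ obtained from $|\PF(a,b)| = (b-a+1)(b+1)^{a-1}$; and the number of left blocks with specified first coordinate $j$, given by Proposition \ref{un} with $l=1$ applied to $\PF(k-i, k-1)$, namely $i \sum_{s=0}^{\min(k-i-1,\, k-j-1)} \binom{k-i-1}{s} (k-1-s)^{k-i-s-2} (s+1)^{s-1}$. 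Multiplying these three factors yields the stated formula.

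For the case $j > k$ I would argue symmetrically: now $\pi_1 = j$ is the first entry of the (translated) right block, so $\alphaR$ ranges freely over $\PF(k-i, k-1)$ with total count $i\,k^{k-i-1}$, while the right block ranges over parking functions in $\PF(m-k+i, n-k)$ whose first coordinate equals the translated preference $j-k$. Invoking Proposition \ref{un} with $l=1$ and the substitutions $m \mapsto m-k+i$, $n \mapsto n-k$, preference $\mapsto j-k$ produces the sum $(n-m-i+1)\sum_{s=0}^{\min(m+i-k-1,\, n-j)} \binom{m-k+i-1}{s} (n-k-s)^{m+i-k-s-2}(s+1)^{s-1}$, while the interleaving factor is now $\binom{m-1}{k-i}$; together these give the second formula. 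The two monotonicity remarks then follow immediately from the behavior of the upper summation limits $\min(k-i-1,\, k-j-1)$ and $\min(m+i-k-1,\, n-j)$, exactly as already flagged in Proposition \ref{un}.

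Once the decomposition is in place the computation is essentially bookkeeping, so I do not anticipate a genuine obstacle. The one place demanding care is the coordinate translation in the $j > k$ case, where the preference $j$ must be shifted to $j-k$ before applying Proposition \ref{un}, together with reconciling the two descriptions of the summation endpoints by using the identities $k-1-j = k-j-1$ and $m-k+i-1 = m+i-k-1$ so that the resulting bounds match those appearing in the statement.
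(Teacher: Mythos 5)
Your proof is correct, and it is organized genuinely differently from the paper's. The paper first proves a dedicated lemma (Theorem \ref{main2}) that, for fixed $k_i=k$, identifies the \emph{maximal} first preference compatible with $\pi_2,\dots,\pi_m$: for $j<k$ this forces $(\pi_2,\dots,\pi_m)$ to be a multi-shuffle of \emph{three} parking functions $\PF(l-i,l-1)$, $\PF(k-l-1,k-l-1)$, $\PF(m-k+i,n-k)$, and the count is then obtained by summing over the maximal value $l$ from $\max(j,i)$ to $k-1$ and changing variables $s=k-l-1$ (and symmetrically for $j>k$, summing over $l\geq\max(j,\,n-m-i+k+1)$). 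You instead use only the two-block decomposition at the unattempted spot $k$ — which the paper records just before Theorem \ref{main2} — and push all of the ``specified first coordinate'' work into Proposition \ref{un} with $l=1$, applied inside the block containing car $1$, with the preference correctly translated to $j-k$ in the right-block case. This is sound: every left block in $\PF(k-i,k-1)$, right block in $\PF(m-k+i,n-k)$, and interleaving combine to give exactly one element of $\PF(m,n)$ with $k_i=k$, and conversely, so the three-factor product is justified; your substitutions into Proposition \ref{un} reproduce both displayed sums, and the monotonicity remarks follow from the behavior of the upper limits exactly as you say. What your route buys: Theorem \ref{main2} becomes unnecessary for this enumeration, and the proof reduces to bookkeeping on top of already-established results. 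What the paper's route buys: Theorem \ref{main2} makes the finer structure (the exact maximal first coordinate) explicit, which is the engine inside Proposition \ref{un} anyway — indeed, unfolding your citation of Proposition \ref{un} recovers the paper's sum over $l$ with $s=k-l-1$ term by term, so the two computations coincide once expanded.
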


\begin{proof}
If $\pi_1=j<k$, then the maximal $\pi_1$ consistent with $\pi_2, \dots, \pi_m$ and $k_i$ is some $l \geq \max(j, i)$ and $\leq k-1$. Thus from Theorem \ref{main2}, the number of parking functions with $\pi_1=j$ and $k_i=k$ is
\begin{align}
&\sum_{l=\max(j, i)}^{k-1} \binom{m-1}{l-i, k-l-1, m-k+i} |\PF(l-i, l-1)| \cdot \notag \\
& \hspace{5cm} \cdot |\PF(k-l-1, k-l-1)| |\PF(m-k+i, n-k)| \notag \\
&=\sum_{l=\max(j, i)}^{k-1} \binom{m-1}{l-i, k-l-1, m-k+i} i(n-m-i+1) l^{l-i-1} (k-l)^{k-l-2} (n-k+1)^{m-k+i-1} \notag \\
&=\binom{m-1}{m-k+i} i(n-m-i+1) (n-k+1)^{m-k+i-1} \cdot \notag \\
& \hspace{5cm} \cdot \sum_{s=0}^{\min(k-i-1, k-j-1)} \binom{k-i-1}{s} (k-1-s)^{k-i-s-2} (s+1)^{s-1},
\end{align}
where the last equality is a change of variables $s=k-l-1$.

If $\pi_1=j>k$, then the maximal $\pi_1$ consistent with $\pi_2, \dots, \pi_m$ and $k_i$ is some $l \geq \max(j, n-m-i+k+1)$. Thus from Theorem \ref{main2}, the number of parking functions with $\pi_1=j$ and $k_i=k$ is
\begin{align*}
&\sum_{l=\max(j, n-m-i+k+1)}^n \binom{m-1}{k-i, l-k-1-n+m+i, n-l} |\PF(k-i, k-1)| \cdot \notag \\
& \hspace{5cm} \cdot |\PF(l-k-1-n+m+i, l-k-1)| |\PF(n-l, n-l)| \notag \\
&=\sum_{l=\max(j, n-m-i+k+1)}^n \binom{m-1}{k-i, l-k-1-n+m+i, n-l} i k^{k-i-1} (n-m-i+1) \cdot \notag \\
& \hspace{5cm} \cdot (l-k)^{l-k-n+m+i-2} (n-l+1)^{n-l-1} \notag \\
&=\binom{m-1}{k-i} i k^{k-i-1} (n-m-i+1)  \cdot \notag \\
& \hspace{5cm} \cdot \sum_{s=0}^{\min(m+i-k-1, n-j)} \binom{m-k+i-1}{s} (n-k-s)^{m+i-k-s-2} (s+1)^{s-1},
\end{align*}
where the last equality is a change of variables $s=n-l$.
\end{proof}

\begin{proposition}\label{coordinate-spot}
Take $i \geq 1$ any integer. Take $m$ and $n$ large with $m=cn$ for some $0<c<1$. For parking function $\piR$ chosen uniformly at random from $\PF(m, n)$, we have
\begin{equation}
\Cov(\pi_1, k_i) \sim -\frac{i}{2(1-c)^2}.
\end{equation}
\end{proposition}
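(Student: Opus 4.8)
The plan is to compute $\ER(\pi_1 k_i)$ via the exact enumeration in Proposition~\ref{jk_cov} and then combine it with the marginal means $\ER(\pi_1)$ and $\ER(k_i)$ to extract the covariance. The covariance is of order $1$ (not $n^2$ or $n$), so the leading-order terms must cancel; consequently I expect that, just as in Proposition~\ref{two-coordinates}, I will need to carry the asymptotic expansion several orders deep and watch delicate cancellations. Concretely, I would write
\begin{equation*}
\ER(\pi_1 k_i)=\frac{1}{|\PF(m,n)|}\sum_{k=i}^{m+i}\sum_{j=1}^{n} j\,k\,\#\{\piR\in\PF(m,n):\pi_1=j,\,k_i=k\},
\end{equation*}
splitting the inner $j$-sum into the regime $j<k$ and the regime $j>k$ according to the two cases of Proposition~\ref{jk_cov}.

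First I would fix $k$ and sum over $j$. In each regime the summand for large $j$ is \emph{constant} in $j$ (as noted in Proposition~\ref{jk_cov}: the count stabilizes once $j\le i$ or $j\le n-m-i+k+1$ and then strictly decreases), so the $j$-sums are of the shape $\sum_j j\cdot(\text{slowly varying count in } j)$ and evaluate to closed forms involving the binomial-weighted sums $\sum_s\binom{\cdot}{s}(s+1)^{s-1}(\cdots)^{\cdots}$ that already appear there. After performing the $j$-summation I would be left with a single sum over $k$ (equivalently, over the two internal summation variables after a change of variables like $s=k-l-1$ or $s=n-l$), whose summand factors as a product of powers and $(s+1)^{s-1}$-type terms. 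This is exactly the Cauchy-product structure that the tree function $F(z)=\sum_{s\ge0}(s+1)^{s-1}z^s/s!$ is designed to resum: using $F(ce^{-c})=e^c$, $F'(ce^{-c})=e^{2c}/(1-c)$, and $F''(ce^{-c})=(3-2c)e^{3c}/(1-c)^3$, together with the value $e^{-c}$ coming from $(n-m+1)/(n+1)^{m-1}$-type normalization, the sum over $k$ collapses to an explicit function of $c$ and $n$.

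The main obstacle is bookkeeping the lower-order terms: because $\Cov(\pi_1,k_i)\sim -\tfrac{i}{2(1-c)^2}$ is $O(1)$, I must expand the summand to one higher order than in the mixed-moment computation and track the linear-in-$i$ dependence, which enters through the multinomial/binomial prefactors $\binom{m-1}{m-k+i}$ and the exponents shifted by $i$. I would therefore parallel the proof of Proposition~\ref{two-coordinates}, keeping terms up to the relevant order, and expect that the $n^2$ and $n$ contributions of $\ER(\pi_1 k_i)$ exactly match $\ER(\pi_1)\ER(k_i)$ so that only the $O(1)$ remainder survives. To finish I need the marginal $\ER(k_i)$; this follows from the same machinery (or from a symmetry/complementation argument via the conjugate $\xR^*$ relating unattempted spots to coordinates), and once all three asymptotic series are in hand, subtracting $\ER(\pi_1)\ER(k_i)$ from $\ER(\pi_1 k_i)$ yields the claimed $-\tfrac{i}{2(1-c)^2}$. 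The linear growth in $i$ is the natural consistency check: the $i$th empty spot sits near position $\approx i/(1-c)$ on average, so its covariance with $\pi_1$ should scale linearly in $i$, matching the stated formula.
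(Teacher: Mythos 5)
Your proposal follows essentially the same route as the paper's proof: starting from Proposition~\ref{jk_cov}, summing over $j$ first (using that the count is constant in $j$ up to a threshold and then shrinks), recognizing the resulting double sum as a Cauchy product to be resummed by the tree-function method, and finishing by computing $\ER(k_i)$ with the same machinery and subtracting $\ER(\pi_1)\ER(k_i)$. The one detail you underestimate is that the $i$-shifted exponents (e.g.\ factors like $k^{k-i-1}$ and $(t+i)^t$ after the change of variables) force the resummation to use the generalized tree functions $F_i(z)=\sum_{s\ge 0}\frac{z^s}{s!}(s+i)^{s-1}=F_1(z)^i/i$ and $G_i(z)=\sum_{s\ge 0}\frac{z^s}{s!}(s+i)^{s}$ and their derivatives at $z=ce^{-c}$, rather than only $F$, $F'$, $F''$; this is exactly where your plan leads, and it is how the paper carries it out.
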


\begin{proof}
From Proposition \ref{jk_cov} and interchanging the order of summation, we have
\begin{align}\label{combined}
&\sum_{k=i}^{m+i} k \left(\sum_{j=1}^{k-1} j \#\{\piR\in \PF(m, n): \pi_1=j, k_i=k\}+\sum_{j=k+1}^{n} j \#\{\piR\in \PF(m, n): \pi_1=j, k_i=k\}\right) \notag \\
&=\frac{1}{2}i(n-m-i+1) \left[\sum_{s=0}^{m-1} \sum_{k=i+1+s}^{m+i} \binom{m-1}{m-k+i, s, k-i-s-1}k (n-k+1)^{m-k+i-1} \cdot\right. \notag \\
& \hspace{5cm} \cdot (k-1-s)^{k-i-s} (s+1)^{s-1} \left(1+\frac{1}{k-1-s}\right) \notag \\
&\left.+\sum_{s=0}^{m-1} \sum_{k=i}^{m+i-1-s} \binom{m-1}{k-i, s, m-k+i-1-s} k^{k-i} (n-k-s)^{m+i-k-s} (s+1)^{s-1}\left(1+\frac{2k+1}{n-k-s}\right)\right].
\end{align}
We make a change of variables: $t=k-i-s-1$ in the first sum and $t=k-i$ in the second sum. Then (\ref{combined}) becomes
\begin{align}\label{almost}
&\frac{1}{2}i(n-m-i+1) \sum_{s=0}^{m-1} \sum_{t=0}^{m-1-s} \binom{m-1}{s, t, m-1-s-t} (n-i-s-t)^{m-s-t-2} (s+1)^{s-1} \cdot \notag \\
& \hspace{.5cm} \cdot \left[(s+t+i+1)(t+i)^{t+1} \left(1+\frac{1}{t+i}\right)+(t+i)^{t} (n-i-s-t)^{2} \left(1+\frac{2(t+i)+1}{n-i-s-t}\right)\right] \notag \\
&= \frac{1}{2}i(n-m-i+1) n^{m} e^{-ic} \sum_{s=0}^{m-1} \sum_{t=0}^{m-1-s} \frac{(ce^{-c})^{s+t}}{s!t!} (s+1)^{s-1} (t+i)^{t} \cdot \notag \\
&\hspace{.5cm} \cdot \left(1-\frac{(s+t)(s+t+1)}{2cn}-\frac{c(s+t+i)^2}{2n}+\frac{(s+t+i)(s+t)}{n}+\frac{2(t+i)+1}{n}+O\left(n^{-2}\right)\right).
\end{align}

The generalized tree function $F_i(z)=\sum_{s=0}^\infty \frac{z^s}{s!} (s+i)^{s-1}$ is related to the tree function $F_1(z)$ via $F_i(z)=(F_1(z))^i/i$, and satisfies $F_i(ce^{-c})=e^{ic}/i$. Further, $G_i(z)=\sum_{s=0}^\infty \frac{z^s}{s!} (s+i)^{s}=(F_{i-1}(z))'$. By the chain rule the first and second derivatives of $F_1(z)$ and $G_i(z)$ therefore respectively satisfy
\begin{align*}
F_1'(ce^{-c}) = \frac{e^{2c}}{1-c}, \hspace{1cm} F_1''(ce^{-c}) = \frac{3-2c}{(1-c)^3}e^{3c},
\end{align*}
\begin{align}
G_i(ce^{-c}) = \frac{e^{ic}}{1-c}, \hspace{1cm} G_i'(ce^{-c}) = \frac{i+1-ic}{(1-c)^3}e^{(i+1)c},
\end{align}
\begin{align*}
G_i''(ce^{-c}) = \frac{(1-c)^2i^2+(1-c)(4-c)i+(4-c)}{(1-c)^5} e^{(i+2)c}.
\end{align*}
We recognize that (\ref{almost}) is in the form of a Cauchy product, and converges to
\begin{align*}
&\frac{1}{2}i(n-m-i+1) n^{m} e^{-ic} \sum_{s=0}^{\infty} \sum_{t=0}^{\infty}\frac{(ce^{-c})^{s+t}}{s! t!} (s+1)^{s-1} (t+i)^{t} \cdot \notag \\
& \hspace{2cm} \cdot \left(1+\frac{1}{n} (A+Bs+Ct+Ds^2+Et^2+Fst)+O(n^{-2})\right),
\end{align*}
where
\begin{equation*}
A=1+2i-\frac{ci^2}{2}, \hspace{.5cm} B=-\frac{1}{2c}+i-ci,
\end{equation*}
\begin{equation}
C=-\frac{1}{2c}+i-ci+2, \hspace{.5cm} D=1-\frac{1}{2c}-\frac{c}{2},
\end{equation}
\begin{equation*}
E=1-\frac{1}{2c}-\frac{c}{2}, \hspace{.5cm} F=2-\frac{1}{c}-c.
\end{equation*}
Using $F_1(z)$ and $G_i(z)$ this can be written as (with $z=ce^{-c}$):
\begin{align}
&\frac{1}{2}i(n-m-i+1) n^{m} e^{-ic} \left[F_1(z)G_i(z)+\frac1n\Big(AF_1(z)G_i(z)+BzF_1'(z)G_i(z)+CzF_1(z)G_i'(z)+\Big.\right. \notag \\ &\left.\Big.D(z^2F_1''(z)+zF_1'(z))G_i(z)+EF_1(z)(z^2G_i''(z)+zG_i'(z))+Fz^2 F_1'(z)G_i'(z)\Big)+O\left(\frac{1}{n^2}\right)\right].
\end{align}
Dividing by $|\PF(m,n)|=(n-m+1)(n+1)^{m-1}$ and simplifying we get
\begin{equation}
\ER(\pi_1 k_i) \sim \frac{in}{2(1-c)}-\frac{3ic}{2(1-c)^2}.
\end{equation}

The same approach also yields
\begin{equation}\label{ki}
\ER(k_i) \sim \frac{i}{1-c}-\frac{ic}{(1-c)^2 n}.
\end{equation}
Combining with Theorem \ref{mean}, the claimed asymptotics are then immediate.
\end{proof}

\subsection{Covariance between two unattempted spots}\label{cov3}

\begin{proposition}\label{two-spots}
Take $1\leq i<j$ any distinct integer pairs. Take $m$ and $n$ large with $m=cn$ for some $0<c<1$. For parking function $\piR$ chosen uniformly at random from $\PF(m, n)$, we have
\begin{equation}
\Var(k_i) \sim \frac{ic}{(1-c)^3}, \hspace{1cm} \Cov(k_i, k_j) \sim \frac{ic}{(1-c)^3}.
\end{equation}
\end{proposition}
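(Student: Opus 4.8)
The plan is to compute the second moment $\ER(k_i^2)$ and the mixed moment $\ER(k_ik_j)$ to leading order and then subtract the product of means, combining with the value $\ER(k_i)\sim i/(1-c)$ from (\ref{ki}). A key structural observation simplifies matters relative to Proposition \ref{two-coordinates}: since $\ER(k_i)=O(1)$, every quantity here is $O(1)$, so only the leading constant term of each Cauchy product is needed and there is no cancellation of high powers of $n$. What does require care is that the leading constant of $\ER(k_i^2)$ must be captured \emph{exactly}, since its $i^2/(1-c)^2$ piece will cancel against $(\ER k_i)^2$ and the variance is precisely the residual.

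First I would set up the enumeration. As recalled at the start of Section \ref{cov2}, fixing $k_i=k$ splits a parking function into a shuffle of $\alphaR\in\PF(k-i,k-1)$ and $\betaR\in\PF(m-k+i,n-k)$, so that, using $|\PF(k-i,k-1)|=i\,k^{k-i-1}$ and $|\PF(m-k+i,n-k)|=(n-m-i+1)(n-k+1)^{m-k+i-1}$,
\[
\#\{\piR\in\PF(m,n):k_i=k\}=\binom{m}{k-i}\,i\,k^{k-i-1}(n-m-i+1)(n-k+1)^{m-k+i-1}.
\]
Extending the same idea, fixing $k_i=k<l=k_j$ (valid since $i<j$) splits $\piR$ into a shuffle of three blocks $\alphaR\in\PF(k-i,k-1)$, $\betaR\in\PF(l-k-j+i,l-k-1)$, and $\gammaR\in\PF(m-l+j,n-l)$, yielding a product of three parking-function counts weighted by the multinomial $\binom{m}{k-i,\,l-k-j+i,\,m-l+j}$.

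Next I would convert the moment sums into Cauchy products. For $\ER(k_i^2)$ I would write $k=s+i$, where $s=k-i$ is the car count of the first block, so that $k^2\cdot i\,k^{k-i-1}=i(s+i)^{s+1}$. Using the generalized tree function identities from the proof of Proposition \ref{coordinate-spot}, the binomial and the tail power $(n-k+1)^{m-k+i-1}$ combine with $m=cn$ and, after dividing by $|\PF(m,n)|=(n-m+1)(n+1)^{m-1}$, collapse the sum to leading order into a single tree-function value built from
\[
\sum_{s\ge0}\frac{z^s}{s!}(s+i)^{s+1}=zG_i'(z)+iG_i(z),\qquad z=ce^{-c},
\]
where $G_i(z)=\sum_s (s+i)^s z^s/s!$. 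Plugging in the stated values of $G_i(ce^{-c})$ and $G_i'(ce^{-c})$ gives $\ER(k_i^2)\sim i^2/(1-c)^2+ic/(1-c)^3$, and subtracting $(\ER k_i)^2\sim i^2/(1-c)^2$ produces $\Var(k_i)\sim ic/(1-c)^3$. For $\ER(k_ik_j)$ the two free block sizes $s=k-i$ and $t=l-k-j+i$ give a double Cauchy product of an $F_i$-type factor (in $s$) and an $F_{j-i}$-type factor (in $t$), together with the free tail block; the weight $kl=(s+i)(s+t+j)$ expands into monomials in $s,t$ that correspond to applying $z\,d/dz$ to these two factors. Collecting the contributions through the values of $F_1,F_i,G_i$ and their derivatives at $ce^{-c}$ should give $\ER(k_ik_j)\sim ij/(1-c)^2+ic/(1-c)^3$, whence $\Cov(k_i,k_j)\sim ic/(1-c)^3$ after subtracting $\ER(k_i)\ER(k_j)\sim ij/(1-c)^2$.

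The main obstacle is the bookkeeping in the mixed moment: one must attach the correct generalized tree function to each of the three blocks, expand $(s+i)(s+t+j)$, and match every monomial to the right combination of derivatives so that the factorized term $ij/(1-c)^2$ cancels cleanly against $\ER(k_i)\ER(k_j)$, leaving exactly $ic/(1-c)^3$. A sharp internal consistency check is that the answer must come out \emph{independent of $j$}: this reflects that, given $k_i$, the later empty spots accrue through asymptotically independent increments, so $\Cov(k_i,k_j)=\Var(k_i)$ for $i<j$, and this constraint pins down the derivative coefficients.
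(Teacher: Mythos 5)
Your proposal is correct and follows essentially the same route as the paper: fix the unattempted spots, decompose the parking function into two (resp.\ three) blocks via the multi-shuffle, convert the moment sums into Cauchy products of generalized tree functions evaluated at $z=ce^{-c}$, and subtract products of means; your intermediate values $\ER(k_i^2)\sim i^2/(1-c)^2+ic/(1-c)^3$ and $\ER(k_ik_j)\sim ij/(1-c)^2+ic/(1-c)^3$ agree exactly with the paper's. The only cosmetic difference is that the paper packages $\sum_s (s+i)^{s+1}z^s/s!$ as $H_i(z)=(F_{i-2}(z))''$ and splits the weight $kl$ via $(s+t+j)=(s+i)+(t+j-i)$ into $H_iF_{j-i}+G_iG_{j-i}$, whereas you reach the same quantities through $zG_i'(z)+iG_i(z)$ and monomial expansion.
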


\begin{proof}
Take $k_i(\piR)=k$ and $k_j(\piR)=l$. The unattempted spot $k$ ranges from $i$ to $m+i$ and the unattempted spot $l$ ranges from $k-i+j$ to $m+j$. The two unattempted spots break up the parking function $\piR$ into three components $\alphaR$, $\betaR$, and $\gammaR$, with $\alphaR \in \PF(k-i, k-1)$, $\betaR \in \PF(l-k-j+i, l-k-1)$, and $\gammaR \in \PF(m-l+j, n-l)$, and $\piR$ a multi-shuffle of the three. We have
\begin{align}\label{stretch}
&\sum_{k=i}^{m+i} k\sum_{l=k-i+j}^{m+j} l \#\{\piR\in \PF(m, n): k_i=k, k_j=l\}\notag \\
&=\sum_{k=i}^{m+i} k\sum_{l=k-i+j}^{m+j} l \binom{m}{k-i, l-k-j+i, m-l+j} i k^{k-i-1} \cdot \notag \\
&\hspace{2cm} \cdot (j-i) (l-k)^{l-k-j+i-1} (n-m-j+1) (n-l+1)^{m-l+j-1}.
\end{align}
We make a change of variables: $s=k-i$ and $t=l-k-j+i$. Then (\ref{stretch}) becomes
\begin{align}\label{lap}
&i(j-i)(n-m-j+1) \sum_{s=0}^m \sum_{t=0}^{m-s} \binom{m}{s, t, m-s-t} (s+i)^s (t+j-i)^{t-1} \cdot \notag \\
&\hspace{2cm} \cdot (j+s+t)(n-j-s-t+1)^{m-s-t-1} \notag \\
&= i(j-i)(n-m-j+1) n^{m-1} e^{-c(j-1)} \sum_{s=0}^m \sum_{t=0}^{m-s} \frac{(ce^{-c})^{s+t}}{s!t!}  \cdot \notag \\
&\hspace{2.5cm} \cdot \Big[(s+i)^{s+1} (t+j-i)^{t-1}+(s+i)^s (t+j-i)^t \Big] \left(1+O\left(n^{-1}\right)\right).
\end{align}

The generalized tree function $F_i(z)=\sum_{s=0}^\infty \frac{z^s}{s!} (s+i)^{s-1}$ is related to the tree function $F_1(z)$ via $F_i(z)=(F_1(z))^i/i$, and satisfies $F_i(ce^{-c})=e^{ic}/i$. Further, $G_i(z)=\sum_{s=0}^\infty \frac{z^s}{s!} (s+i)^{s}=(F_{i-1}(z))'$ and $H_i(z)=\sum_{s=0}^\infty \frac{z^s}{s!} (s+i)^{s+1}=(F_{i-2}(z))''$. By the chain rule $G_i(z)$ and $H_i(z)$ therefore respectively satisfy
\begin{align}
G_i(ce^{-c}) = \frac{e^{ic}}{1-c}, \hspace{1cm} H_i(ce^{-c}) = \frac{(1-c)i+c}{(1-c)^3}e^{ic}.
\end{align}
We recognize that (\ref{lap}) is in the form of a Cauchy product, and converges to
\begin{align*}
&i(j-i)(n-m-j+1) n^{m-1} e^{-c(j-1)} \cdot \notag \\
& \cdot \sum_{s=0}^{\infty} \sum_{t=0}^{\infty}\frac{(ce^{-c})^{s+t}}{s! t!} \Big[(s+i)^{s+1} (t+j-i)^{t-1}+(s+i)^s (t+j-i)^t \Big] \left(1+O\left(n^{-1}\right)\right).
\end{align*}
Using $F_i(z)$, $G_i(z)$, and $H_i(z)$ this can be written as (with $z=ce^{-c}$):
\begin{align}
&i(j-i)(n-m-j+1) n^{m-1} e^{-c(j-1)}  \bigg[H_i(z)F_{j-i}(z)+G_i(z)G_{j-i}(z)+O\left(\frac{1}{n}\right)\bigg].
\end{align}
Dividing by $|\PF(m,n)|=(n-m+1)(n+1)^{m-1}$ and simplifying we get
\begin{equation}
\ER(k_i k_j) \sim \frac{i(c+j-jc)}{(1-c)^3}.
\end{equation}

The same approach also yields
\begin{equation}
\ER(k_i^2) \sim \frac{i(c+i-ic)}{(1-c)^3}.
\end{equation}
Combining with (\ref{ki}), the claimed asymptotics are then immediate.
\end{proof}

\subsection{The special situation $m=n$}
\label{special}
The asymptotic moment calculations in Sections \ref{mixed}, \ref{cov1}, \ref{cov2}, and \ref{cov3} could be alternatively approached via Abel's multinomial theorem. Unlike the tree function method which fails for the case $m=n$ due to divergence, Abel's multinomial theorem applies broadly, whether in the generic case $m \lesssim n$ or in the special case $m=n$. However calculation-wise it is in general more cumbersome to apply Abel's multinomial theorem as compared with the tree function method, so we only use this alternative approach when $m=n$.

\begin{theorem}[Abel's multinomial theorem, derived from Pitman \cite{Pitman} and Riordan \cite{Riordan}]\label{Abel}
Let
\begin{equation}\label{b}
A_n(x_1, \dots, x_m; p_1, \dots, p_m)=\sum \binom{n}{\sR} \prod_{j=1}^m (x_j+s_j)^{s_j+p_j},
\end{equation}
where $\sR=(s_1, \dots, s_m)$ and $\sum_{i=1}^m s_i=n$.
Then
\begin{multline}\label{b1}
A_n(x_1, \dots, x_i, \dots, x_j, \dots, x_m; p_1, \dots, p_i, \dots, p_j, \dots, p_m)\\=A_n(x_1, \dots, x_j, \dots, x_i, \dots, x_m; p_1, \dots, p_j, \dots, p_i, \dots, p_m).
\end{multline}

\begin{multline}\label{b2}
A_n(x_1, \dots, x_m; p_1, \dots, p_m)\\=\sum_{i=1}^m A_{n-1}(x_1, \dots, x_{i-1}, x_i+1, x_{i+1}, \dots, x_m; p_1, \dots, p_{i-1}, p_i+1, p_{i+1}, \dots, p_m).
\end{multline}

\begin{equation}\label{b3}
A_n(x_1, \dots, x_m; p_1, \dots, p_m)=\sum_{s=0}^{n} \binom{n}{s}s!(x_1+s)A_{n-s}(x_1+s, x_2, \dots, x_m; p_1-1, p_2, \dots, p_m).
\end{equation}
Moreover, the following special instances hold via the basic recurrences listed above:
\begin{equation}\label{1}
A_n(x_1, \dots, x_m; -1, \dots, -1)=(x_1\cdots x_m)^{-1}(x_1+\cdots+x_m)(x_1+\cdots+x_m+n)^{n-1}.
\end{equation}

\begin{equation}\label{2}
A_n(x_1, \dots, x_m; -1, \dots, -1, 0)=(x_1\cdots x_m)^{-1}x_m(x_1+\cdots+x_m+n)^{n}.
\end{equation}
\end{theorem}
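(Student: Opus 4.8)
The plan is to treat the three recurrences (\ref{b1})--(\ref{b3}) as the engine of the proof and then bootstrap the two closed forms (\ref{1}) and (\ref{2}) out of them by a single joint induction on $n$, writing $X=x_1+\cdots+x_m$ throughout.

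First, (\ref{b1}) is immediate: the defining sum (\ref{b}) runs over all compositions $\sR$ of $n$, so simultaneously transposing the roles of $(x_i,p_i)$ and $(x_j,p_j)$ together with $s_i,s_j$ is merely a relabeling of the summation index and leaves the sum unchanged. For (\ref{b2}) I would start from the elementary multinomial (Pascal) recurrence $\binom{n}{\sR}=\sum_{i=1}^m\binom{n-1}{\sR-e_i}$ (recording which block the last of the $n$ labels joins, where $e_i$ is the $i$th unit vector and the $i$th term is dropped when $s_i=0$). Substituting into (\ref{b}), interchanging the two sums, and reindexing the $i$th summand by $s_i\mapsto s_i-1$ turns $(x_i+s_i)^{s_i+p_i}$ into $((x_i+1)+s_i')^{s_i'+(p_i+1)}$ while leaving every other factor intact, which is exactly the right-hand side of (\ref{b2}). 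For (\ref{b3}), which singles out the first coordinate and is the combinatorial heart of the statement (corresponding in Pitman's forest picture to splitting off the trees rooted at vertex $1$), I would argue by induction on $n$, feeding (\ref{b2}) into the induction hypothesis; the bookkeeping here---tracking the shifted argument $x_1+s$ against the lowered exponent $p_1-1$ and the $\binom{n}{s}s!$ weight---is the step I expect to be the most delicate, and is where I would lean on Pitman \cite{Pitman} and Riordan \cite{Riordan} if a direct forest decomposition proves cleaner than the bare induction.

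With the recurrences in hand, I would prove (\ref{1}) and (\ref{2}) simultaneously by induction on $n$; both hold trivially at $n=0$. For the inductive step I would first obtain (\ref{1}) at level $n$ from (\ref{2}) at level $n-1$: applying (\ref{b2}) to $A_n(\xR;-1,\dots,-1)$ produces $\sum_{i=1}^m A_{n-1}(\dots,x_i+1,\dots;\dots,0,\dots)$, each summand being an instance of (\ref{2}) with the zero in slot $i$ (available via the symmetry (\ref{b1})) and hence equal to $(X+n)^{n-1}/\prod_{j\neq i}x_j$. Summing over $i$ gives $(X+n)^{n-1}(\sum_i x_i)/\prod_j x_j=X(X+n)^{n-1}/\prod_j x_j$, which is (\ref{1}).

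Finally I would derive (\ref{2}) at level $n$ from (\ref{1}) at level $n$: by (\ref{b1}) it suffices to place the zero in the first slot, and applying (\ref{b3}) to $A_n(\xR;0,-1,\dots,-1)$ reduces it, after inserting the just-proved formula (\ref{1}) for $A_{n-s}(x_1+s,x_2,\dots,x_m;-1,\dots,-1)$ and cancelling the $(x_1+s)$ factors, to $\frac{1}{\prod_{j\geq2}x_j}\sum_{s=0}^n\binom{n}{s}s!\,(X+s)(X+n)^{n-s-1}$. The remaining single-variable sum collapses by a telescoping argument: writing $Y=X+n$ and $k=n-s$ it equals $n!\big(\sum_{k=0}^n Y^k/k!-\sum_{k=1}^n Y^{k-1}/(k-1)!\big)=Y^n=(X+n)^n$, yielding (\ref{2}). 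This closes the joint induction without circularity, since (\ref{1}) at level $n$ invokes only (\ref{2}) at level $n-1$, while (\ref{2}) at level $n$ invokes (\ref{1}) at the same level $n$.
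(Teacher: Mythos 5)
The first thing to say is that there is nothing in the paper to compare you against: Theorem \ref{Abel} is stated there with no proof at all --- it is imported wholesale from Pitman \cite{Pitman} and Riordan \cite{Riordan}. So your proposal stands or falls on its own. Most of it stands. The relabeling argument for (\ref{b1}) is correct; the proof of (\ref{b2}) from the multinomial Pascal recurrence $\binom{n}{\sR}=\sum_{i}\binom{n-1}{\sR-e_i}$ (dropping terms with $s_i=0$, then shifting $s_i\mapsto s_i-1$) is correct; and the joint induction deriving (\ref{1}) and (\ref{2}) from the three recurrences is correct, including the telescoping evaluation $\sum_{s=0}^n\binom{n}{s}s!\,(X+s)(X+n)^{n-s-1}=(X+n)^n$. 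One point of precision: your derivation of (\ref{2}) at level $n$ via (\ref{b3}) actually invokes (\ref{1}) at \emph{all} levels $n-s$, $0\le s\le n$, not just at level $n$; this does not break the induction, since in your ordering every instance of (\ref{1}) at level $\le n$ is available by the time (\ref{2}) at level $n$ is proved, but it should be stated that way.

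The genuine gap is (\ref{b3}): you never prove it. You sketch ``induction on $n$ feeding (\ref{b2}) into the hypothesis,'' concede the bookkeeping is delicate, and reserve the right to fall back on the references --- but (\ref{b3}) is precisely the engine of your derivation of (\ref{2}), so as written the chain is incomplete. The good news is that you already possess the tool needed to close it, namely your own telescoping identity. Group the defining sum (\ref{b}) by the value $u$ of the first coordinate $s_1$:
\begin{equation*}
A_n(x_1,\dots,x_m;p_1,\dots,p_m)=\sum_{u=0}^n\binom{n}{u}(x_1+u)^{u+p_1}\,A_{n-u}(x_2,\dots,x_m;p_2,\dots,p_m),
\end{equation*}
and expand the right-hand side of (\ref{b3}) the same way, writing $u=s+t$ where $t$ is the first coordinate of the inner sum. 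Using $\binom{n}{s}s!\binom{n-s}{u-s}=\binom{n}{u}\binom{u}{s}s!$, the coefficient of $A_{n-u}(x_2,\dots,x_m;p_2,\dots,p_m)$ on that side becomes $\binom{n}{u}(x_1+u)^{p_1}\sum_{s=0}^u\binom{u}{s}s!\,(x_1+s)(x_1+u)^{u-s-1}$, so (\ref{b3}) reduces to the one-variable identity
\begin{equation*}
\sum_{s=0}^u\binom{u}{s}s!\,(x_1+s)(x_1+u)^{u-s-1}=(x_1+u)^{u},
\end{equation*}
which is word-for-word the telescoping sum you evaluated in your last paragraph, with $X$ replaced by $x_1$ and $n$ by $u$. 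Adding this reduction makes your argument complete and fully self-contained --- indeed more than the paper itself provides.
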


We recognize that in computing $\ER(\prod_{i=1}^l \pi_i^{p_i})$ in Theorem \ref{general-mean}, (\ref{general-asy}) is asymptotically
\begin{align}
&\frac{n-m+1}{\prod_{i=1}^l (p_i+1)} \left(A_{m-l}(n-m+1, \underbracket[0.5pt]{1, \dots, 1}_{l \hspace{.1cm} \text{1's}}; \sum_{i=1}^l p_i+l-1, \underbracket[0.5pt]{-1, \dots, -1}_{l \hspace{.1cm} \text{-1's}})\right.\notag \\
&+(l-1)\left(\sum_{i=1}^l p_i+l\right) A_{m-l}(n-m+1, \underbracket[0.5pt]{1, \dots, 1}_{l \hspace{.1cm} \text{1's}}; \sum_{i=1}^l p_i+l-2, 0, \underbracket[0.5pt]{-1, \dots, -1}_{l-1 \hspace{.1cm} \text{-1's}})\notag \\
&\left.+\frac{1}{2}\left(\sum_{i=1}^l p_i+l\right) A_{m-l}(n-m+1, \underbracket[0.5pt]{1, \dots, 1}_{l \hspace{.1cm} \text{1's}}; \sum_{i=1}^l p_i+l-2, \underbracket[0.5pt]{-1, \dots, -1}_{l \hspace{.1cm} \text{-1's}})\right).
\end{align}
This is a general formula that works for any $m$, $n$, and $l$. When $m=n$, taking $l=1, 2$, we have
\begin{equation}
\ER(\pi_1) \sim \frac{n}{2}-\frac{\sqrt{2\pi}}{4}n^{1/2}+\frac{5}{3}.
\end{equation}
\begin{equation}
\ER(\pi_1 \pi_2) \sim \frac{n^2}{4}-\frac{\sqrt{2\pi}}{4}n^{3/2}+2n.
\end{equation}
These asymptotic results are in sharp contrast with the case $m=cn$ for some $0<c<1$. As $c \rightarrow 1$, the correction terms in (\ref{last2}) (\ref{last1}) blow up, contributing to the different asymptotic orders between the generic situation $m \lesssim n$ and the special situation
$m=n$.

\section{Interval parking functions}
\label{ipf}
In this section we study a generalization of parking functions $\PF(m, n)$ in which the $i$th car is willing to park only in an interval $[a_i,b_i]\subseteq\{1,\dots,n\}$.  If all cars can successfully park then we say that the pair $(\aR,\bR)=((a_1,\dots,a_m), (b_1,\dots,b_m))$ is an \emph{interval parking function} with $m$ cars and $n$ spots, or $\IPF(m, n)$. If $b_i=n$ for all $i$, then we recover a parking function $\PF(m, n)$.

Let $\tauR(\cdot)$ denote the parking outcome of either a parking function or an interval parking function. The following propositions for $\IPF(m, n)$ generalize the corresponding results for the special case $\IPF(n, n)$ discussed in \cite{CDMY}.

\begin{proposition}\label{IPF-equivalence}
Let $\aR, \bR \in [n]^m$. Then
\begin{enumerate}
\item $\aR \in \PF(m, n)$ if and only if $(\aR,(n,\dots,n))\in\IPF(m, n)$.

\item $(\aR, \bR)\in\IPF(m, n)$ if and only if $\aR\in\PF(m, n)$ and $\tauR(\aR) \le_C \bR$.
\end{enumerate}
\end{proposition}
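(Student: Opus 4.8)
The plan is to run the standard (unbounded) parking process with preference vector $\aR$ in parallel with the interval parking process for the pair $(\aR,\bR)$, and to show by induction on the car index that the two processes produce identical partial configurations until (and unless) some car is forced past its ceiling $b_i$. The crucial observation is that in either process car $i$ probes forward from $a_i$ and parks in the first currently-unoccupied spot it meets; the upper endpoint $b_i$ never enters into \emph{selecting} that spot, it only governs whether the car is \emph{permitted} to stop there. Hence, given identical occupancy just before car $i$'s turn, the spot that car $i$ targets is the same first-available spot $f_i \ge a_i$ in both processes, and the only discrepancy that can arise is that the interval process aborts precisely when $f_i > b_i$.

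For part (2) I would deduce both implications from this coupling. For the forward direction, assume $(\aR,\bR)\in\IPF(m,n)$. Inducting on $i$ with the invariant that the set of occupied spots after the first $i-1$ cars agrees in the two processes, note that since the interval process succeeds, car $i$ parks at $f_i$ with $f_i \le b_i \le n$; the unbounded process, seeing the same occupancy and being unconstrained, also parks car $i$ at $f_i$. Thus the occupancies stay synchronized, every car in the unbounded process parks (so $\aR\in\PF(m,n)$), and $\tau_i(\aR)=f_i\le b_i$ for each $i$, i.e. $\tauR(\aR)\le_C\bR$. For the converse, assume $\aR\in\PF(m,n)$ and $\tauR(\aR)\le_C\bR$. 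The same induction shows that whenever the unbounded process parks car $i$ at $\tau_i(\aR)$, the interval process, seeing identical occupancy, sends car $i$ to the same first-available spot $\tau_i(\aR)$, and this stop is legal precisely because $\tau_i(\aR)\le b_i$. Hence all cars park in the interval process and $(\aR,\bR)\in\IPF(m,n)$.

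Part (1) is then the special case $\bR=(n,\dots,n)$ of part (2): since every parking spot lies in $[n]$, the condition $\tauR(\aR)\le_C(n,\dots,n)$ holds automatically, so the equivalence collapses to $\aR\in\PF(m,n)$. Alternatively, one observes directly that with $b_i=n$ the interval $[a_i,n]$ imposes no effective ceiling, so the interval process coincides verbatim with the unbounded parking process.

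The whole argument is a coupling, so I do not expect any computational obstacle. The only delicate point is to state the parking dynamics precisely enough that ``first available spot $\ge a_i$'' is manifestly independent of $b_i$, and to fix the right induction hypothesis, namely that the parking configuration (equivalently, the set of occupied spots) after the first $i$ cars is identical in the two processes. Once that invariant is in place, both directions of part (2) follow immediately, and the care required is entirely in the bookkeeping of the induction rather than in any substantive estimate.
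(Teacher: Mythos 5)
Your proposal is correct and is essentially the paper's approach: the paper simply states that both equivalences ``follow directly from the definition,'' and your coupling induction --- observing that the spot a car probes to is the first available spot at or after $a_i$ regardless of $b_i$, with $b_i$ only governing legality --- is precisely the careful write-up of that observation. No gap; your argument just makes explicit the bookkeeping the paper leaves implicit.
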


\begin{proof}
These equivalences follow directly from the definition.
\end{proof}

\begin{proposition}
Let $\cR=(\aR, \bR) \in \IPF(m, n)$. Then
\begin{enumerate}
\item $\bR^* \in \PF(m, n)$.

\item $\aR \leq_C \tauR(\cR) \leq_C \bR$ and $\tauR(\bR^*)^* \leq_C \bR$.
\end{enumerate}
\end{proposition}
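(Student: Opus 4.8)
The plan is to reduce everything to two structural facts about the componentwise order $\leq_C$ and the conjugation map $\xR \mapsto \xR^*$. First, conjugation is an order-reversing involution: since $(\xR^*)_k = n+1-x_{m+1-k}$ one checks directly that $(\xR^*)^* = \xR$ and that $\xR \leq_C \yR$ forces $\yR^* \leq_C \xR^*$. Second, $\PF(m,n)$ is a down-set for $\leq_C$: if $\xR \leq_C \yR$ and $\yR \in \PF(m,n)$ then $\xR \in \PF(m,n)$, which is immediate from the pigeonhole criterion (\ref{pigeon}), because lowering a preference can only increase each count $\#\{k:\pi_k\le i\}$. I will also use two elementary features of the parking process: a car never parks before its preferred spot, so $\piR \leq_C \tauR(\piR)$ for any parking function $\piR$; and the parking outcome assigns distinct spots to distinct cars.

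With these in hand, the chain $\aR \leq_C \tauR(\cR) \leq_C \bR$ in part (2) is immediate from the definition of $\IPF(m,n)$: car $i$ parks at the first free spot at or after $a_i$ that still lies in $[a_i,b_i]$, and since $\cR=(\aR,\bR)\in\IPF(m,n)$ every car does park, so $a_i \leq \tau_i(\cR) \leq b_i$ for all $i$.

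For part (1) I set $\tauR := \tauR(\cR)$. Because the $m$ cars occupy $m$ distinct spots of $[n]$, the entries of $\tauR$ are distinct; sorting them as $\lambda_1<\cdots<\lambda_m$ gives $\lambda_i \leq n-m+i$, so $\tauR \in \PF(m,n)$, and likewise $\tauR^* \in \PF(m,n)$ (conjugating distinct entries keeps them distinct). Applying the order-reversing property to $\tauR \leq_C \bR$ (just established) yields $\bR^* \leq_C \tauR^*$; since $\tauR^* \in \PF(m,n)$ and $\PF(m,n)$ is a down-set, $\bR^* \in \PF(m,n)$.

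Finally, the relation $\tauR(\bR^*)^* \leq_C \bR$ follows by conjugating the generic inequality $\piR \leq_C \tauR(\piR)$ at $\piR=\bR^*$, which is legitimate since part (1) guarantees $\bR^* \in \PF(m,n)$: from $\bR^* \leq_C \tauR(\bR^*)$ the order-reversing involution gives $\tauR(\bR^*)^* \leq_C (\bR^*)^* = \bR$. The only genuine subtlety — the step I would flag as the crux — is bookkeeping the direction of the inequalities under conjugation, since it is easy to flip a $\leq_C$ the wrong way; once the order-reversing involution and the down-set lemma are pinned down, each assertion is a one-line consequence, with the conceptual heart being the observation that the parking outcome is itself a parking function.
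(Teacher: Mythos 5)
Your proposal is correct and follows essentially the same route as the paper's proof: the chain $\aR \leq_C \tauR(\cR) \leq_C \bR$ from the parking process, the observation that the outcome has distinct entries and hence its conjugate is a parking function, conjugation of $\tauR(\cR) \leq_C \bR$ to conclude $\bR^* \in \PF(m,n)$, and finally conjugation of $\bR^* \leq_C \tauR(\bR^*)$ to get $\tauR(\bR^*)^* \leq_C \bR$. The only difference is expository: you state explicitly the two lemmas (that $\PF(m,n)$ is a down-set for $\leq_C$ and that $\xR \mapsto \xR^*$ is an order-reversing involution) which the paper invokes implicitly.
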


\begin{proof}
Evidently $\aR \leq_C \tauR(\cR) \leq_C \bR$. Since $\tauR(\cR)$ is a parking outcome, it consists of distinct entries, and so its non-decreasing rearrangement $\lambdaR=(\lambda_1, \dots, \lambda_m)$ satisfies $\lambda_i \geq i$ for all $1\leq i\leq m$. It follows that $\tauR(\cR)^*$ also consists of distinct entries, and its non-decreasing rearrangement $\lambdaR^*=(\lambda_1^*, \dots, \lambda_m^*)=(n+1-\lambda_m, \dots, n+1-\lambda_1)$ satisfies $\lambda_i^* \leq n-m+i$ for all $1\leq i\leq m$. Therefore $\tauR(\cR)^* \in \PF(m, n)$. From $\tauR(\cR) \leq_C \bR$, one has $\bR^* \leq_C \tauR(\cR)^*$. Hence $\bR^* \in \PF(m, n)$. This implies that $\bR^* \leq_C \tauR(\bR^*)$, and further implies that $\tauR(\bR^*)^* \leq_C \bR$.
\end{proof}

\begin{proposition}
The number of interval parking functions $|\IPF(m, n)|$ satisfies
\begin{equation}
|\IPF(m, n)|=\sum_{\sR \models m} \binom{m}{\sR} \prod_{i=1}^{n-m+1} (s_i+1)^{s_i-1} \frac{n!}{\prod_{i=1}^{n-m} (n-i+1-s_1-\cdots-s_i)},
\end{equation}
where $\sR=(s_1, \dots, s_{n-m+1})$ is a composition of $m$. In particular,
\begin{equation} \label{count-IPF}
\left|\IPF(n, n)\right| = n!(n+1)^{n-1}.
\end{equation}
\end{proposition}

\begin{proof}
For an interval parking function $\cR=(\aR, \bR) \in \IPF(m, n)$, there are $n-m$ parking spots that are never attempted by any car. Let $k_i(\piR)$ for $i=1, \dots, n-m$ represent these spots, so that $0:=k_0<k_1<\cdots<k_{n-m}<k_{n-m+1}:=n+1$. This separates $\aR \in \PF(m, n)$ into $n-m+1$ disjoint non-interacting segments (some segments might be empty), with each segment a classical parking function of length $(k_{i}-k_{i-1}-1)$ after translation. The parking outcome is $\tauR(\cR)=\tauR(\aR)$, and for every $\aR \in \PF(m, n)$, there are precisely $n!/\prod_{i=1}^{n-m} (n-k_i+1)$ choices for $\bR$ such that $(\aR, \bR) \in \IPF(m, n)$. We have
\begin{align}
&|\IPF(m, n)|=\sum_{k} \prod_{i=1}^{n-m+1} (k_{i}-k_{i-1})^{k_{i}-k_{i-1}-2}\frac{n!}{\prod_{i=1}^{n-m} (n-k_i+1)} \cdot \notag \\
& \hspace{5cm} \cdot \binom{m}{k_1-k_0-1, \dots, k_{n-m+1}-k_{n-m}-1} \notag \\
&=\sum_{\sR \models m} \binom{m}{s_1, \dots, s_{n-m+1}} \prod_{i=1}^{n-m+1} (s_i+1)^{s_i-1} \frac{n!}{\prod_{i=1}^{n-m} (n-i+1-s_1-\cdots-s_i)},
\end{align}
where $\sR=(k_1-k_0-1, \dots, k_{n-m+1}-k_{n-m}-1)$ and $\sum_{i=1}^{n-m+1} s_i=m$.
\end{proof}

From (\ref{count-IPF}), we recognize that the number of interval parking functions $\IPF(n, n)$ coincides with the number of edge-labeled spanning trees of $K_{n+1}$. The rest of Section \ref{ipf} will focus on this combinatorial implication. We first present some background material on the symmetric group.

\subsection{The symmetric group as a Coxeter system}\label{Coxeter}

Denote by $\Sym_n$ the symmetric group on $n$ letters. We set $e=(1,\dots,n)$ (the identity permutation) and $w_0=(n,n-1,\dots,1)$. We denote by $t_{ij}$ the permutation transposing $i$ and $j$ and fixing all other values, and take $s_i=t_{i,i+1}$. The elements $s_1,\dots,s_{n-1}$ are termed the \textit{standard generators}. Our convention for multiplication is right to left, which is consistent with treating permutations as bijective functions from $[n]\to [n]$. Thus $t_{ij}x$ is obtained by transposing the digits $i,j$ wherever they appear in $x$, while $x t_{ij}$ is obtained by transposing the digits in the $i$th and $j$th positions.

The theory of normal forms in a Coxeter system was introduced by du~Cloux \cite{duCloux} and is elaborated in Bj\"{o}rner and Brenti \cite{BB}. The symmetric group $\Sym_n$ may be viewed as a Coxeter system of type A, with generators $S=\{s_1,\dots,s_{n-1}\}$. The \textit{length} $l(x)$ of $x\in\Sym_n$ is the smallest number $k$ such that $x$ can be written as a product $s_{i_1}\cdots s_{i_k}$ of standard generators; in this case $s_{i_1}\cdots s_{i_k}$ is called a \textit{reduced word} for $x$. It is a standard fact that length equals number of inversions:
\begin{equation} \label{length-inv}
\l(x)=\{(i,j):\ 1\leq i<j\leq n,\ x(i)>x(j)\}.
\end{equation}
Let $\sigma_k=s_k\cdots s_1$. Every $x\in\Sym_n$ has a unique normal form: a reduced word $N(x)$ of the form $v_1 \cdots v_{n-1}$, where $v_k=e$ or $v_k=s_k \cdots s_j$ for some $1\leq j\leq k$ is a prefix of $\sigma_k$. For example, $l(e)=0$, $N(e)=e$ and $l(w_0)=n(n-1)/2$, $N(w_0)=\sigma_1\cdots\sigma_{n-1}$. It is straightforward to obtain the permutation $x$ given its normal form $N(x)$. Conversely, since $xv_{n-1}^{-1} \cdots v_1^{-1}=e$, we may interpret the normal form decomposition of $x$ in an alternative way: Start with the permutation $x$. $v_{n-1}^{-1}$ corresponds to a sequence of adjacent transpositions that moves the value $n$ in $x$ to the right until it is in the last position (if $n$ is already in the last position then $v_{n-1}^{-1}=e$). Similarly, $v_{n-2}^{-1}$ corresponds to a sequence of adjacent transpositions that moves the value $n-1$ in $xv_{n-1}^{-1}$ to the right until it is in the next to last position (if $n-1$ is already in the next to last position then $v_{n-2}^{-1}=e$). And so on. Thus $x$ is fully characterized by the sequence
\begin{equation}
\lambdaR(x)=(\lambda_1(x),\dots,\lambda_{n-1}(x))=(|v_1|,\dots,|v_{n-1}|)\in[0,1]\times\cdots\times[0,n-1].
\end{equation}
This describes an explicit bijection between $\Sym_n$ and $C_2 \times \cdots \times C_n$, where $C_i$ is a chain with $i$ elements.

\subsection{One-to-one correspondence between interval parking functions $\IPF(n, n)$ and edge-labeled spanning trees of $K_{n+1}$}\label{1-1}

Recall the classical result that there exists a bijection between parking functions $\PF(n, n)$ and spanning trees of $K_{n+1}$, using the concept of \emph{specification} and \emph{order permutation}. Building upon this result, we will construct a bijection between interval parking functions $\IPF(n, n)$ and edge-labeled spanning trees of $K_{n+1}$, where the vertices are labeled $0$ through $n$ (vertex $0$ is the root) and the edges are labeled $1$ through $n$.

As illustrated in Chassaing and Marckert \cite{CM} and Yan \cite{Yan}, a parking function $\piR \in \PF(n, n)$ may be uniquely determined by its associated specification $\rR(\piR)$ and order permutation $\sigma(\piR)$. Here the specification is $\rR(\piR)=(r_1, \dots, r_n)$, where $r_k=\#\{i: \pi_i=k\}$ records the number of cars whose first preference is spot $k$. The order permutation $\sigma(\piR) \in \Sym_n$, on the other hand, is defined by
\begin{equation}
\sigma_i=|\{j: \pi_j<\pi_i, \text{ or } \pi_j=\pi_i \text{ and } j\leq i\}|,
\end{equation}
and so is the permutation that orders the list, without switching elements which are the same. In words, $\sigma_i$ is the position of the entry $\pi_i$ in the non-decreasing rearrangement of $\piR$. Conversely, we can easily recover a parking function $\piR$ by replacing $i$ in $\sigma(\piR)$ with the $i$th smallest term in the sequence $1^{r_1}\dots n^{r_n}$.

However, not every pair of a length $n$ vector $\rR$ and a permutation $\sigma \in \Sym_n$ can be the specification and the order permutation of a parking function from $\PF(n, n)$. The vector and the permutation must be compatible with each other, in the sense that the terms $1+\sum_{i=1}^{k-1} r_i, \dots, \sum_{i=1}^{k} r_i$ appear from left to right in $\sigma$ for every $k$ to satisfy the non-decreasing rearrangement requirement of $\piR$. Moreover, the specification $\rR$ should satisfy a balance condition:
\begin{align}\label{determine}
\sum_{s=1}^{j} r_s \geq j, \hspace{.2cm} \forall 1 \leq j \leq n, \hspace{1cm} \sum_{s=1}^n r_s=n.
\end{align}
Let $\C(n)$ be the set of all compatible pairs.

Denote by $\F(n+1)$ the set of spanning trees of $K_{n+1}$, where the vertices are labeled $0$ through $n$ and vertex $0$ is the root. Further denote by $\F^e(n+1)$ the set of edge-labeled spanning trees of $K_{n+1}$, where the edges, in addition to the vertices, are also labeled $1$ through $n$.

\begin{theorem}[adapted from Yan \cite{Yan}]\label{Yan}
The set $\C(n)$ is in one-to-one correspondence with $\PF(n, n)$, and is also in one-to-one correspondence with $\F(n+1)$.
\end{theorem}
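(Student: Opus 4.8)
The plan is to establish the two asserted correspondences separately: first $\C(n)\leftrightarrow\PF(n,n)$, which merely formalizes the discussion preceding the theorem, and then $\C(n)\leftrightarrow\F(n+1)$, which is the substantive combinatorial content adapted from Yan. Throughout I regard a spanning tree in $\F(n+1)$ as a rooted tree on $\{0,1,\dots,n\}$ with root $0$.

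For the first correspondence I would define $\Phi\colon\PF(n,n)\to\C(n)$ by $\Phi(\piR)=(\rR(\piR),\sigma(\piR))$ and check it lands in $\C(n)$: the balance condition $\sum_{s=1}^{j} r_s\ge j$ of (\ref{determine}) is exactly the parking criterion (\ref{pigeon}) rewritten through the specification (a spot $\le j$ must be the first choice of at least $j$ cars), while compatibility of $\sigma$ with $\rR$ is immediate from the definition of the order permutation, which lists equal values in increasing position order. For the inverse I would take a compatible pair and reconstitute $\piR$ by replacing $i$ in $\sigma$ with the $i$th smallest term of $1^{r_1}\cdots n^{r_n}$, as indicated in the text; the only things left to verify are that these two maps are mutually inverse and that the reconstituted word is genuinely a parking function, both of which follow from (\ref{pigeon}) and (\ref{determine}).

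The heart of the proof is the bijection $\C(n)\leftrightarrow\F(n+1)$. I would read the specification $\rR$ as a breadth-first out-degree profile: set $c_0=r_1$ and $c_t=r_{t+1}$ for $1\le t\le n-1$, and grow a rooted plane tree by giving the $t$th vertex in breadth-first order exactly $c_t$ children. The balance condition guarantees the procedure never stalls, since $\sum_{t=0}^{i}c_t=\sum_{s=1}^{i+1}r_s\ge i+1$ means at least $i+1$ non-root vertices have been discovered by the time the $(i+1)$st is required, while $\sum_{s} r_s=n$ forces exactly $n$ edges and hence a tree on $n+1$ vertices whose last breadth-first vertex is a leaf. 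The compatible permutation $\sigma$ then assigns the labels $1,\dots,n$ to the non-root vertices along the breadth-first order, producing an element of $\F(n+1)$. For the inverse I would run breadth-first search on a given labeled rooted tree under the convention that children are explored in increasing label order, reading off the out-degree sequence as $\rR$ and the visited labels as $\sigma$.

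The main obstacle is verifying that this last pair of maps really are mutually inverse. Two points require care: reconciling the compatibility constraint on $\sigma$ from $\C(n)$ (that each block $1+\sum_{i<k}r_i,\dots,\sum_{i\le k}r_i$ appears left to right) with the increasing-child-order convention used to run breadth-first search on a tree, and confirming the bookkeeping, since for a fixed valid $\rR$ the number of compatible permutations is $\binom{n}{r_1,\dots,r_n}$, which must equal the number of labelings of the associated plane shape that reproduce it under the search convention. A global sanity check is that both sides are enumerated by $(n+1)^{n-1}$, via Pitman--Stanley and Cayley respectively, but an explicit inverse is needed for a genuine bijection rather than an equicardinality statement. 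Finally I would note that both correspondences share the same middle object $\C(n)$, so composing them recovers the classical bijection between $\PF(n,n)$ and $\F(n+1)$ and sets up the edge-labeled refinement pursued in Theorem \ref{edge-labeled}.
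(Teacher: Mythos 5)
Your plan follows the construction that the paper actually relies on (the paper does not prove this theorem itself---it cites Yan---but it illustrates the maps inside the proof of Theorem \ref{edge-labeled}): specification plus order permutation for $\PF(n,n)\leftrightarrow\C(n)$, and a breadth-first reading for $\C(n)\leftrightarrow\F(n+1)$. Your first correspondence is handled correctly. The gap is in the second, and it is precisely the point you flag as ``the main obstacle'' and then leave unresolved: whether the BFS label word is $\sigma$ or $\sigma^{-1}$. This cannot be deferred, because with the reading your construction most naturally suggests---the $i$th non-root vertex in BFS order receives label $\sigma_i$, and the inverse map reads the visited labels back off as $\sigma$---the two maps are not mutually inverse, and the forward map is not even injective on $\C(n)$. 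Concretely, for $n=3$ the two compatible pairs $(\rR,\sigma)=((2,1,0),(3,1,2))$ and $((2,0,1),(1,3,2))$ both produce the tree in which the root is adjacent to $1$ and $3$, and vertex $2$ is a child of $3$. The reason is a mismatch of conditions: compatibility of $\sigma$ with $\rR$ constrains the \emph{positions} at which the values of each block appear in $\sigma$, while the increasing-child-order convention constrains the \emph{values} appearing at a prescribed block of BFS positions; these are inverse conditions to one another.

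The missing idea is that the compatible permutation must be the \emph{inverse} of the BFS word: the vertex labeled $j$ sits at BFS position $\sigma(j)$, equivalently the BFS word is $\sigma^{-1}$. (This is exactly what the paper records in its worked example, where the BFS word is $259618473$ while ``$\sigma^{-1}=519724863$ is compatible with $\rR$.'') With this choice the two conditions literally coincide: the children of the $(k-1)$st vertex in BFS order occupy the BFS positions $1+\sum_{i<k}r_i,\dots,\sum_{i\leq k}r_i$, which as a set of integers is the same as the $k$th block of values; both ``the values of block $k$ appear left to right in $\sigma$'' and ``labels increase along that block of BFS positions'' then say that $\sigma^{-1}$ is increasing on this set. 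Once this is fixed, the rest of your argument goes through as you outlined: balance of $\rR$ is exactly what keeps the BFS growth from stalling, $\sum_i r_i=n$ gives $n$ edges, the two maps are mutually inverse by construction, and your multinomial count $\binom{n}{r_1,\dots,r_n}$ of compatible permutations matches the number of labelings of the plane shape consistent with the search convention.
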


\begin{theorem}\label{edge-labeled}
There is a one-to-one correspondence between $\IPF(n, n)$ and $\F^e(n+1)$, the set of edge-labeled spanning trees of $K_{n+1}$.
\end{theorem}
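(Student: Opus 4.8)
The plan is to exhibit the bijection fiberwise over the set $\F(n+1)$ of rooted vertex-labeled spanning trees, using Theorem \ref{Yan} to handle the underlying tree and the Coxeter normal form of Section \ref{Coxeter} to encode the choice of upper bounds. First I would reduce to a statement about fibers. By Proposition \ref{IPF-equivalence}(2), a pair $(\aR,\bR)$ lies in $\IPF(n,n)$ precisely when $\aR\in\PF(n,n)$ and $\tauR(\aR)\le_C\bR$. Since $m=n$, the outcome $\tauR(\aR)$ is a permutation of $[n]$, so for fixed $\aR$ the admissible upper bounds are exactly the vectors $\bR$ with $b_i\in\{\tau_i,\dots,n\}$, of which there are $\prod_{i=1}^n(n-\tau_i+1)=n!$. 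By Theorem \ref{Yan} each $\aR\in\PF(n,n)$ corresponds to a unique $T\in\F(n+1)$, and $T$ carries exactly $n!$ edge-labelings (it has $n$ edges, labeled bijectively by $[n]$); this recovers the count $|\IPF(n,n)|=|\F^e(n+1)|=n!(n+1)^{n-1}$ of (\ref{count-IPF}). It therefore suffices to build, for each $\aR\leftrightarrow T$, a bijection between $\{\bR:\tauR(\aR)\le_C\bR\}$ and the edge-labelings of $T$.

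Next I would set up matching coordinates on the two fibers. Rooting $T$ at $0$, every non-root vertex $v\in[n]$ has a unique parent edge $e_v$, so an edge-labeling is the same datum as a permutation $w\in\Sym_n$ via $w(v)=\mathrm{label}(e_v)$. On the upper-bound side, the constraints $b_i\ge\tau_i$ are independent across $i$, so reindexing by the outcome value $v=\tau_i$ produces, for each $v\in[n]$, a free coordinate $g_v:=b_{\tau^{-1}(v)}-v$ ranging over the chain $\{0,\dots,n-v\}$ of size $n-v+1$ (the coordinate $v=n$ being forced to $0$). The normal form of Section \ref{Coxeter} identifies $\Sym_n$ with the product of chains $\{0,\dots,k\}$, $1\le k\le n-1$, through $w\mapsto(\lambda_1(w),\dots,\lambda_{n-1}(w))$. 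Pairing chains of equal size, i.e.\ setting $\lambda_{n-v}(w)=g_v$ for $1\le v\le n-1$, defines a bijection $\bR\leftrightarrow w$ and hence $\bR\leftrightarrow$ an edge-labeling of $T$; composing with Theorem \ref{Yan} then yields the map $\IPF(n,n)\to\F^e(n+1)$.

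The routine part is verifying that each of the three steps is a genuine bijection: the fiber count, the vertex-to-edge indexing of a rooted tree, and the only nontrivial input, the normal-form parametrization of $\Sym_n$, which is already supplied in Section \ref{Coxeter}. I expect the main conceptual point to lie in the matching conventions: the choice of how to index the edges and how to pair chains of equal size is not canonical, and distinct admissible choices give genuinely different bijections. This freedom is precisely the source of the \emph{family} asserted in Theorem \ref{edge-labeled}. A natural way to single out a distinguished member is to require compatibility with the embedded copy of $\PF(n,n)$: under the pairing above, $\bR=(n,\dots,n)$ forces $\lambda_k(w)=k$ for all $k$, i.e.\ $w=w_0$, so parking functions map to the trees of Theorem \ref{Yan} carrying one fixed edge-labeling, which serves as a convenient consistency check on the construction.
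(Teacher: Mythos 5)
Your proposal is correct and takes essentially the same route as the paper: reduce via Proposition \ref{IPF-equivalence} and Theorem \ref{Yan} to matching, for each $\aR\leftrightarrow T$, the fiber $\{\bR:\tauR(\aR)\le_C\bR\}$ with the edge-labelings of $T$, then use the normal-form identification of $\Sym_n$ with a product of chains to pair chains of equal size. Your explicit fiberwise count ($n!$ on both sides) and the pairing $\lambda_{n-v}(w)=g_v$ make rigorous what the paper carries out implicitly and by worked example, and your closing remark about non-canonical indexing choices is precisely the paper's observation that different tree-growing conventions yield the asserted family of bijections.
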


\begin{proof}
By Proposition \ref{IPF-equivalence}, $(\aR, \bR) \in \IPF(n, n)$ is equivalent to $\aR\in \PF(n, n)$ and $\tauR(\aR) \leq_C \bR$. Using Theorem \ref{Yan}, $\aR$ is in one-to-one correspondence with a spanning tree of $K_{n+1}$, where $\aR$ determines the shape and vertex labels of the spanning tree. Since $\tauR(\aR)$ is a permutation on $n$ letters, $\bR-\tauR(\aR)$ takes values in $C_1 \times \cdots \times C_n$, where $C_i$ is a chain of length $i$ (after reordering the indices). Using results on Coxeter systems from Section \ref{Coxeter}, this gives an association between $\bR$ and the edge labels of the spanning tree.

\begin{figure}
\begin{center}
\begin{forest}
for tree={circle, draw, l sep=3em}[0, baseline, fill={gray}[2, root color={red}, edge label={node[midway,left] {5}}[6, root color={red}, edge label={node[midway,left] {3}}[7, root color={red}, edge label={node[midway,left] {2}}]]][5, root color={red}, edge label={node[midway,right] {6}}[1, root color={red}, edge label={node[midway,left] {4}}][8, root color={red}, edge label={node[midway,right] {1}}[3, root color={red}, edge label={node[midway,right] {7}}]]][9, root color={red}, edge label={node[midway,right] {9}}[4, root color={red}, edge label={node[midway,right] {8}}]]]
\end{forest}
\caption{Edge-labeled spanning tree of complete graph.}\label{illustration1}
\end{center}
\end{figure}
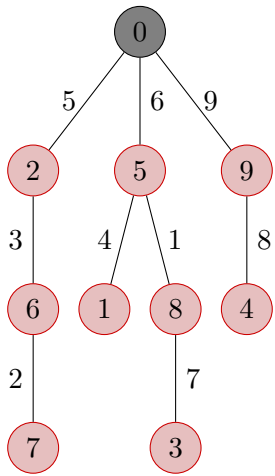

We illustrate the map with a representative example. See Figure \ref{illustration1} representing an element of $\F^e(10)$. We read the vertices in ``breadth first search" (BFS) order: $v_{0}, \dots, v_{9}=0,2,5,9,6,1,8,4,7,3$. That is, read the root vertex first, then all vertices at level one (distance one from the root), then those at level two (distance two from the root), and so on, where vertices at a given level are naturally ordered in order of increasing predecessor, and, if they have the same predecessor, increasing order. We let $\sigma=259618473$ be this vertex ordering once we remove the root vertex. We also record the edges incident with the vertices as $x=569341827$, with associated normal form $\lambdaR(x)=(0, 2, 2, 4, 4, 0, 2, 6) \in C_2 \times \cdots \times C_9$. We let $r_i$ record the number of successors of $v_i$, that is, $\rR=(3, 1, 2, 1, 1, 0, 1, 0, 0)$. Now $\rR$ is balanced and $\sigma^{-1}=519724863$ is compatible with $\rR$, by virtue of the fact that vertices with the same predecessor are read in increasing order. The corresponding parking function is $\aR=(3, 1, 7, 4, 1, 2, 5, 3, 1)$, with parking outcome $\tauR(\aR)=(3, 1, 7, 4, 2, 5, 6, 8, 9)$. Thus $\bR-\tauR(\aR) \in C_7 \times C_9 \times C_3 \times C_6 \times C_8 \times C_5 \times C_4 \times C_2 \times C_1$. Reordering the indices in $\lambdaR(x)$ and adding an extra $0$ (for $C_1$), we have $\bR-\tauR(\aR)=(0, 6, 2, 4, 2, 4, 2, 0, 0)$. Hence $\bR=(3, 7, 9, 8, 4, 9, 8, 8, 9)$. The interval parking function connected with this edge-labeled spanning tree is $\cR=(\aR, \bR)=((3, 1, 7, 4, 1, 2, 5, 3, 1), (3, 7, 9, 8, 4, 9, 8, 8, 9))$.

The above one-to-one correspondence between edge-labeled spanning trees and interval parking functions does not depend on using the BFS algorithm; any other algorithm which builds up a tree one edge at a time through a sequence of growing subtrees will give an alternate bijection. Generally, an algorithm checks the vertices of the tree one-by-one, starting with the root. At each step, we pick a new vertex and connect it to the checked vertices. The choice function (which defines the algorithm) tells us which new vertex to pick.
\end{proof}

Equivalently, we could view the edge-labeled spanning tree of $K_{n+1}$ as the spanning tree of a complete bipartite graph of $K_{n, n+1}$ where the first group has $n$ vertices labeled $1$ through $n$ and the second group has $n+1$ vertices labeled $0$ through $n$, and every vertex in the first group has two incident edges. Two vertices $i$ and $j$ of $K_{n+1}$ are connected with edge label $k$ if and only if vertices $i$ and $j$ in the second group of $K_{n, n+1}$ are both connected to vertex $k$ in the first group. This is a one-to-one correspondence, since vertex $k$ must be unique as otherwise this creates a cycle in $K_{n, n+1}$. See Figure \ref{illustration2} for a transformed view of Figure \ref{illustration1}.

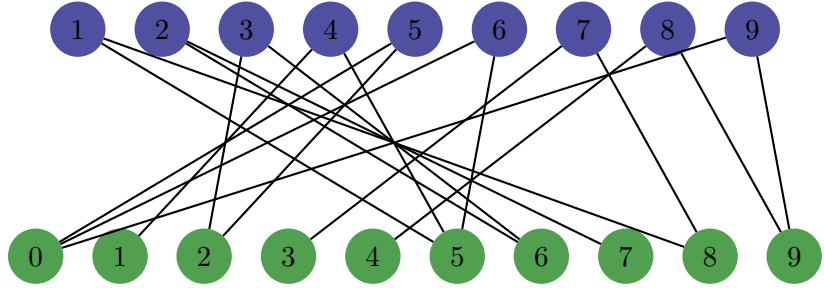
\begin{figure}
\begin{center}
\begin{tikzpicture}[thick,amat/.style={matrix of nodes,
  row sep=1em, column sep=1em,
  nodes={anchor=center,align=center,text width=1.8em,inner sep=0pt,outer sep=0pt,solid,circle}},
  fsnode/.style={fill=myblue},
  ssnode/.style={fill=mygreen}]

 \matrix[amat,nodes=fsnode] (mat1) {1&2&3&4&5&6&7&8&9\\};

 \matrix[amat,below=2cm of mat1,nodes=ssnode] (mat2) {0&1&2&3&4&5&6&7&8&9\\};

 \draw  (mat1-1-1) edge (mat2-1-6)
  (mat1-1-1) edge (mat2-1-9)
  (mat1-1-2) edge (mat2-1-7)
  (mat1-1-2) edge (mat2-1-8)
  (mat1-1-3) edge (mat2-1-3)
  (mat1-1-3) edge (mat2-1-7)
  (mat1-1-4) edge (mat2-1-2)
  (mat1-1-4) edge (mat2-1-6)
  (mat1-1-5) edge (mat2-1-1)
  (mat1-1-5) edge (mat2-1-3)
  (mat1-1-6) edge (mat2-1-1)
  (mat1-1-6) edge (mat2-1-6)
  (mat1-1-7) edge (mat2-1-4)
  (mat1-1-7) edge (mat2-1-9)
  (mat1-1-8) edge (mat2-1-5)
  (mat1-1-8) edge (mat2-1-10)
  (mat1-1-9) edge (mat2-1-1)
  (mat1-1-9) edge (mat2-1-10);
\end{tikzpicture}
\caption{Spanning tree of complete bipartite graph.}\label{illustration2}
\end{center}
\end{figure}

\section*{Acknowledgements}

Mei Yin acknowledges helpful conversations with Jeremy L. Martin, and is particularly thankful to Richard Kenyon for many enlightening comments.


\begin{thebibliography}{20}

\bibitem{A} Adeniran, A., Butler, S., Dorpalen-Barry, G., Harris, P.E., Hettle, C., Liang, Q., Martin, J.L., Nam, H.: Enumerating parking completions using Join and Split. \textit{Electron. J. Combin.} 27(2): Research Paper 44, 19 pp. (2020).

\bibitem{BB} Bj\"{o}rner, A., Brenti, F.: Combinatorics of Coxeter groups. Volume 231 of Graduate Texts in Mathematics. Springer, New York. (2005).

\bibitem{CJPS} Cameron, P.J., Johannsen, D., Prellberg, T., Schweitzer, P.: Counting defective parking functions. \textit{Electron. J. Combin.} 15: Research Paper 92, 15 pp. (2008).

\bibitem{CM} Chassaing, P., Marckert, J.-F.: Parking functions, empirical processes, and the width of rooted labeled trees. \textit{Electron. J. Combin.} 8: Research Paper 14, 19 pp. (2001).

\bibitem{CDMY} Colaric, E., DeMuse, R., Martin, J.L., Yin, M.: Interval parking functions. \emph{Adv. Appl. Math.} 123: 102129 (2021).

\bibitem{CR} Cori, R., Rossin, D.: On the sandpile group of dual graphs. \textit{European J. Combin.} 21: 447-459 (2000).

\bibitem{DH} Diaconis, P., Hicks, A.: Probabilizing parking functions. \textit{Adv. Appl. Math.} 89: 125-155 (2017).

\bibitem{duCloux} du Cloux, F.: A transducer approach to Coxeter groups. \textit{J. Symbolic Comput.} 27: 311-324 (1999).

\bibitem{EH} Ehrenborg, R., Happ, A.: Parking cars after a trailer. \textit{Australas. J. Combin.} 70: 402-406 (2018).

\bibitem{Pollak} Foata, D., Riordan, J.: Mappings of acyclic and parking functions. \textit{Aequationes Math.} 10: 10-22 (1974).

\bibitem{GS} Gessel, I.M., Seo, S.: A refinement of Cayley's formula for trees. \textit{Electron. J. Combin.} 11: Research Paper 27, 23 pp. (2006).

\bibitem{H} Haiman, M.D.: Conjectures on the quotient ring by diagonal invariants. \textit{J. Algebraic Combin.} 3: 17-76 (1994).

\bibitem{KY} Kenyon, R., Yin, M.: Parking functions: From combinatorics to probability. arXiv: 2103.17180 (2021).

\bibitem{KY2} Kung, J.P.S., Yan, C.H.: Expected sums of general parking functions. \textit{Ann. Comb.} 7: 481-493 (2003).

\bibitem{Pitman} Pitman, J.: Forest volume decompositions and Abel-Cayley-Hurwitz multinomial expansions. \textit{J. Combin. Theory Ser. A} 98: 175-191 (2002).

\bibitem{PS1} Pitman, J., Stanley, R.P.: A polytope related to empirical distributions, plane trees, parking functions, and the associahedron. \textit{Discrete Comput. Geom.} 27: 603-634 (2002).

\bibitem{Riordan} Riordan, J.: Combinatorial Identities. John Wiley \& Sons, Inc., New York. (1968).

\bibitem{Stanley1} Stanley, R.P.: Parking functions and noncrossing partitions. \textit{Electron. J. Combin.} 4: Research Paper 20, 14 pp. (1997).

\bibitem{Stanley2} Stanley, R.P.: Hyperplane arrangements, parking functions and tree inversions. In: Sagan, B.E., Stanley, R.P. (eds.) Mathematical Essays in Honor of Gian-Carlo Rota. Progr. Math. Volume 161, pp. 359-375. Birkh\"{a}user, Boston. (1998).

\bibitem{Yan1} Yan, C.H.: Generalized parking functions, tree inversions, and multicolored graphs. \textit{Adv. Appl. Math.} 27: 641-670 (2001).

\bibitem{Yan} Yan, C.H.: Parking functions. In: B\'{o}na, M. (ed.) Handbook of Enumerative Combinatorics. Discrete Math. Appl., pp. 835-893. CRC Press, Boca Raton. (2015).

\end{thebibliography}
\end{document}